\newtheorem{dfs}{Definition}[section]
\newtheorem{lms}[dfs]{Lemma}
\newtheorem{thms}[dfs]{Theorem}
\newtheorem{props}[dfs]{Proposition}
\newtheorem{cors}[dfs]{Corollary}
\newtheorem{rems}[dfs]{Remark}
\newtheorem*{thm*}{Theorem}
\title[Comparison theory and smooth minimal C$^*$-dynamics]
{Comparison theory and smooth minimal C$^*$-dynamics}
\author{Andrew S. Toms}
\address{Department of Mathematics and Statistics, York University,
4700 Keele St., Toronto, Ontario, Canada, M3J 1P3}
\email{atoms@mathstat.yorku.ca}
\keywords{Hilbert modules, Cuntz semigroup, ASH algebras, C$^*$-dynamical systems}
\subjclass[2000]{Primary 46L35, Secondary 46L80}
\thanks{This research was supported in part by an NSERC Discovery Grant}
\begin{document}

\begin{abstract}
We prove that the C$^*$-algebra of a minimal diffeomorphism satisfies Blackadar's Fundamental Comparability Property
for positive elements.  This leads to the classification, in terms of $\mathrm{K}$-theory and traces, of the isomorphism classes of 
countably generated Hilbert modules over such algebras, and to a similar classification for the closures of 
unitary orbits of self-adjoint elements.  We also obtain a structure theorem for the Cuntz semigroup in this setting, and prove a 
conjecture of Blackadar and Handelman:  the lower semicontinuous dimension functions are weakly dense in the space of all dimension
functions.  These results continue to hold in the broader setting of unital simple ASH algebras 
with slow dimension growth and stable rank one.  Our main tool is a sharp bound on the radius of comparison of a recursive subhomogeneous 
C$^*$-algebra.  This is also used to construct uncountably many non-Morita-equivalent simple separable amenable C$^*$-algebras
with the same $\mathrm{K}$-theory and tracial state space, providing a C$^*$-algebraic analogue of McDuff's uncountable
family of $\mathrm{II}_1$ factors.  We prove in passing that the range of the radius of comparison is exhausted by simple C$^*$-algebras.
\end{abstract}  

\maketitle

\section{Introduction}

The comparison theory of projections is fundamental to the theory of von Neumann algebras, and is the basis for the
type classification of factors.  For a general C$^*$-algebra this theory is vastly more complicated, but but no less
central.  Blackadar opined in \cite{Bl4} that ``the most important general structure question concerning simple
C$^*$-algebras is the extent to which the Murray-von Neumann comparison theory for factors is valid in arbitrary simple 
C$^*$-algebras.''  In this article we answer Blackadar's question for the C$^*$-algebras associated to smooth minimal
dynamical systems, among others, and give several applications.

Tellingly, Blackadar's quote makes no mention of projections.  A C$^*$-algebra may have few or no projections, in which case
their comparison theory says little about the structure of the algebra.  The appropriate
replacement for projections is positive elements, along with a notion of comparison for the latter which generalises
Murray-von Neumann comparison for projections.  This idea was first introduced by Cuntz in \cite{Cu1} with a view to 
studying dimension functions on simple C$^*$-algebras.  His comparison relation is conveniently encoded in what is
now known as the {\it Cuntz semigroup}, a positively ordered Abelian monoid whose construction is analogous to that of the
Murray-von Neumann semigroup.  When the natural partial order on this semigroup is governed by traces, then we say
that the C$^*$-algebra has {\it strict comparison of positive elements} (see
Subsection \ref{dimfunc} for a precise definition);  this property, first introduced in \cite{Bl4}, is also known as {\it Blackadar's 
Fundamental Comparability Property for positive elements}.  It is the best available 
analogue among simple C$^*$-algebras for the comparison theory of projections in a factor, and a powerful regularity 
property necessary for the confirmation of G. A. Elliott's $\mathrm{K}$-theoretic rigidity 
conjecture (see \cite{ET} and \cite{To2}).  Its connection with the comparison theory of projections in a von Neumann algebra is 
quite explicit:  if a unital simple stably finite 
C$^*$-algebra $A$ has strict comparison of positive elements, then Cuntz comparison for those positive elements with zero in
their spectrum is synonymous with Murray-von Neumann comparison of the corresponding support projections in the bidual;  the remaining
positive elements have support projections which are contained in $A$, and Cuntz comparison for these elements reduces to Murray-von Neumann 
comparison of their support projections in $A$, as opposed to $A^{**}$.

Our main result applies to a class of C$^*$-algebras which contains properly the C$^*$-algebras associated to
minimal diffeomorphisms.  Recall that a C$^*$-algebra is {\it subhomogeneous} if
there is a uniform bound on the dimensions of its irreducible representations, and {\it approximately subhomogeneous
(ASH)} if it is the limit of a direct system of subhomogeneous C$^*$-algebras.  There are no known examples of simple
separable amenable stably finite C$^*$-algebras which are not ASH.  Every unital separable ASH algebra is the limit of
a direct sequence of {\it recursive subhomogeneous C$^*$-algebras}, a particularly tractable kind of subhomogeneous
C$^*$-algebra (\cite{NW}). 

\begin{thms}\label{master}
Let $(A_i,\phi_i)$ be a direct sequence of recursive subhomogeneous C$^*$-algebras with slow dimension growth.  
Suppose that the limit algebra $A$ is unital and simple.  It follows that $A$ has strict comparison of positive
elements.  
\end{thms}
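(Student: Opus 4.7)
The strategy is to work through the \emph{radius of comparison} $\mathrm{rc}(A)$, a numerical Cuntz-semigroup invariant with the property that $\mathrm{rc}(A) = 0$ is equivalent to strict comparison of positive elements. The task is then reduced to proving $\mathrm{rc}(A) = 0$ for the inductive limit $A$. Since the radius of comparison is lower semicontinuous under direct limits, it will suffice to show that for every $\eta > 0$, the image $\phi_{i,\infty}(A_i)$ inside $A$ has radius of comparison at most $\eta$ for all sufficiently large $i$.

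The first and central step is the promised sharp bound: for a recursive subhomogeneous C*-algebra $R$ whose base spaces $X_\ell$ have topological dimension at most $d_\ell$ and whose corresponding matrix fibres have rank at least $k_\ell$, one should obtain an estimate of the form $\mathrm{rc}(R) \lesssim \max_\ell d_\ell/(2 k_\ell)$. I would prove this by induction on the length of the recursive pullback decomposition. At the base, one is in $M_k(C(X))$ and the bound is essentially the fact that a $k$-dimensional bundle over a $d$-dimensional CW complex with fibrewise rank exceeding $d/2$ admits a nowhere-vanishing section (obstruction theory and Euler-class arguments); translated into the language of Cuntz comparison of positive elements, this allows one to construct a subequivalence $a \precsim b$ from a strict inequality $d_\tau(a) + \eta < d_\tau(b)$ of rank functions provided $\eta$ exceeds the stated bound. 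At each pullback stage one must lift a fibre-level subequivalence across the clutching map without accumulating error, which is achieved by cutting down by functions supported on a neighborhood of the gluing locus and using the dimension-rank margin to absorb the correction.

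The second step passes to the limit. Slow dimension growth of the system $(A_i,\phi_i)$ says precisely that after composing with sufficiently many of the $\phi_i$, the ratios $d_\ell/k_\ell$ attached to the decomposition of the image $\phi_{i,j}(A_i)$ inside $A_j$ become arbitrarily small. Combining this with the first-step bound (now applied to the subalgebra $\phi_{i,j}(A_i) \subseteq A_j$, or to $A_j$ itself) controls $\mathrm{rc}$ uniformly along the system. Simplicity of $A$ is used to propagate trace inequalities downward: any strict inequality $d_\tau(a) + \eta < d_\tau(b)$ for all $\tau \in \mathrm{T}(A)$ is witnessed, after approximation, by the corresponding strict inequality for all traces on some $A_j$, because in a simple algebra positive contractions become full far enough along the sequence.

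The main obstacle is the first step: a \emph{sharp} linear bound in $d/k$, rather than the weaker quadratic or constant-but-large bounds available from general principles. Recursive subhomogeneous structure resists a purely fibrewise analysis because the pullback amalgamation introduces vector-bundle obstructions that live on the gluing subcomplex rather than on any individual $X_\ell$, so one has to propagate the bundle-theoretic estimates through the decomposition with careful bookkeeping of the dimension-over-rank ratio at each stage. Once this is done, the remaining inductive-limit and simplicity ingredients are standard.
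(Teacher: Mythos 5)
Your proposal follows essentially the same route as the paper: reduce to showing $\mathrm{rc}(A)=0$, prove a sharp bound of the form $\mathrm{rc}(R)\le\max_k\frac{\dim(X_k)-1}{2n(k)}$ for an RSH algebra by induction on the length of the decomposition (with the pullback stages handled via a relative comparison lemma that cuts down near the clutching locus and uses Phillips's obstruction-theoretic extension result), and then combine slow dimension growth with a simplicity/limit argument to drive the bound to zero. One small imprecision worth flagging: the iteration across pullback stages \emph{does} accumulate error (the paper's norm estimate degrades from $\epsilon$ to $N\sqrt{\epsilon}$ at each stage), so it must be absorbed by a priori choosing the initial tolerance small enough; and the passage from $\liminf_i \mathrm{rc}(A_i)=0$ to $\mathrm{rc}(A)=0$ is not a formal lower-semicontinuity fact but requires the lemma you sketch in your last paragraph, which in the paper also has to be reproved without the injectivity hypothesis used in the cited earlier version.
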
   

\noindent
We note that the hypothesis of slow dimension growth is necessary, as was shown by Villadsen in \cite{V1}.
The relationship between Theorem \ref{master} and the C$^*$-algebras of minimal dynamical systems is derived
from the following theorem:

\begin{thms}[Lin-Phillips, \cite{LP}]\label{diffash}
Let $M$ be a compact smooth connected manifold, and let $h:M \to M$ be a minimal diffeomorphism.  It follows that
the transformation group C$^*$-algebra C$^*(M,\mathbb{Z}, h)$ is a unital simple direct limit of recursive subhomogeneous
C$^*$-algebras with slow dimension growth (indeed, no dimension growth).
\end{thms}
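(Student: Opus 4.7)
The plan is to deduce the statement from Putnam's orbit-breaking construction combined with a first-return tower decomposition, exploiting smoothness of $h$ to keep topological dimensions bounded.

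First I would fix a point $y\in M$ and, following Putnam, consider the C$^*$-subalgebra $A_y\subset A:=C^*(M,\mathbb{Z},h)$ generated by $C(M)$ together with $uC_0(M\setminus\{y\})$, where $u$ is the canonical unitary implementing $h$. More generally, for a closed set $Y\subset M$ I would consider $A_Y\subset A$ generated by $C(M)$ and $uC_0(M\setminus Y)$. Choosing a decreasing sequence of closed smooth balls $Y_1\supset Y_2\supset\cdots$ with $\bigcap_n Y_n=\{y\}$ and small enough that $Y_n$ meets each orbit sufficiently rarely, a standard approximation argument (the element $u$ itself is the $\|\cdot\|$-limit of elements of the form $u f_n$ with $f_n\in C_0(M\setminus Y_n)$ tending to $1$) shows that $A=\overline{\bigcup_n A_{Y_n}}$. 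Unitality of $A$ and minimality of $h$ give unitality and simplicity of the limit; simplicity of each $A_{Y_n}$ is not needed.

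Next, for each $n$, I would produce a recursive subhomogeneous decomposition of $A_{Y_n}$ by using the first return map to $Y_n$. Minimality gives a continuous first-return time function $r_n:M\setminus \mathrm{int}(Y_n)\to \mathbb{N}$ except on a thin set, and the level sets $\{r_n=k\}$ partition $M$ into finitely many ``Rokhlin columns'' of heights $k\in\{k_1<\cdots<k_{\ell(n)}\}$. Each column of height $k$ is homeomorphic to $K\times\{0,\ldots,k-1\}$ with $K\subset \partial Y_n$ a compact base. In $A_{Y_n}$ one can cut off the ``closing-up'' transition at the top of each tower (this is precisely what the orbit-breaking factor $C_0(M\setminus Y_n)$ accomplishes); the resulting algebra is a pullback of matrix algebras $M_{k_j}(C(K_j))$ glued successively over their boundaries as $j$ increases, which is exactly the recursive subhomogeneous format of Phillips.

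The key quantitative point, and the place where smoothness is essential, is the control of the topological dimension of the base spaces $K_j$. Choosing $Y_n$ to be a small closed smooth ball around $y$, transversality of the smooth diffeomorphism $h$ to $\partial Y_n$ (after an arbitrarily small smooth perturbation of $y$, which can be ensured since minimality is preserved and the analysis is intrinsic) guarantees that each $K_j$ is a compact subset of $\partial Y_n$, hence of topological dimension at most $\dim M -1$. Thus for every $n$ the RSH algebra $A_{Y_n}$ has topological dimension uniformly bounded by $\dim M-1$. On the other hand, as $n\to\infty$ the neighborhood $Y_n$ shrinks, so the minimum return time $k_1=k_1(n)$ tends to infinity by minimality and compactness; hence $\max_j\dim(K_j)/k_j\to 0$, establishing no dimension growth.

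The main obstacle I expect is the dimension control of the bases $K_j$: without smoothness the first-return sections to a closed set can carry arbitrarily large topological dimension (this is essentially the phenomenon behind Villadsen's examples), so the argument genuinely uses that $h$ is a diffeomorphism of a manifold and that $\partial Y_n$ is a smooth $(\dim M -1)$-sphere transverse to the flow. Handling the non-transverse locus (which can always be removed by a $C^\infty$-small perturbation of the ball $Y_n$) and arranging the pullback gluing data to fit Phillips' recursive format are the technical, but routine, remaining ingredients.
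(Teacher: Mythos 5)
The paper does not prove this theorem; it is quoted from Lin and Phillips \cite{LP} with no argument given in the text, so there is no internal proof to compare against. That said, the strategy you sketch---exhausting $C^*(M,\mathbb{Z},h)$ by Putnam orbit-breaking subalgebras $A_{Y_n}$ with $Y_n$ shrinking to a point, and decomposing each $A_{Y_n}$ recursively using the Rokhlin tower partition over $Y_n$---is indeed the approach taken by Lin and Phillips.

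There is, however, a slip in your dimension bookkeeping. The base of the Rokhlin tower of height $k_j$ is (the closure of) the return-time level set $\{y \in Y_n : r_n(y) = k_j\}$, a subset of the full ball $Y_n$, not of its boundary sphere $\partial Y_n$. Consequently the base spaces of the recursive subhomogeneous decomposition have covering dimension up to $\dim M$, not $\dim M - 1$. The transversality picture you invoke, with the section living in a codimension-one hypersurface, is the one appropriate to a continuous flow with a Poincar\'e section; the time-one map of a discrete diffeomorphism offers no such dimension drop. Fortunately this does not damage the conclusion: no dimension growth requires only that the ratio of topological dimension to matrix size tend to zero, and since the dimension is uniformly bounded by $\dim M$ while the minimum return time $k_1(n)\to\infty$ as $Y_n$ shrinks, the conclusion follows exactly as you say.

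What smoothness (or rather a genericity hypothesis on $Y_n$) actually buys is not a smaller dimension for the bases but a controlled structure for the tower boundaries. One needs the sets along which the return-time level sets accumulate to have strictly smaller dimension than $\dim M$, so that the pullback data fit the recursive format of Definition \ref{rshdef} with manageable clutching spaces $X_k^{(0)}$; for an arbitrary homeomorphism these boundary sets can be pathological even though every level set is trivially at most $\dim M$-dimensional. Your appeal to Villadsen's examples is therefore misleading: the perforation in those constructions comes from the choice of connecting maps in an inductive system, not from the topological dimension of first-return sections of a manifold, which is bounded regardless of smoothness.
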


\noindent
$\mathrm{K}$-theoretic considerations show the class of C$^*$-algebras covered by Theorem \ref{master} to be considerably
larger than the class covered by Theorem \ref{diffash}.

Let us describe briefly the applications of our main result. 
In a C$^*$-algebra $A$ of stable rank one, the Cuntz semigroup can be identified
with the semigroup of isomorphism classes of countably generated Hilbert $A$-modules---addition corresponds to the direct sum,
and the partial order is given by inclusion of modules (\cite{CEI}).  It is also 
known that positive elements $a,b \in A$ are approximately unitarily equivalent if and only if the canonical maps
from $C_0(0,1]$ into $A$ induced by $a$ and $b$ agree at the level of the Cuntz semigroup (\cite{CE}).  Thus, to the extent that one knows the
structure of the Cuntz semigroup, one also knows what the isomorphism classes of Hilbert $A$-modules and the closures
of unitary orbits of positive operators look like.  If $A$ is in addition unital, simple, exact, and has strict comparison
of positive elements, then its Cuntz semigroup can be described in terms of $\mathrm{K}$-theory and traces (see \cite[Theorem 2.6]{BT}),
and the Ciuperca-Elliott classification of orbits of positive operators extends to self-adjoint elements.
Thus, for the algebras of Theorem \ref{master}, under the additional assumption of stable rank one, we have a description of the
countably generated Hilbert $A$-modules and of the closures of unitary orbits of self-adjoints in terms of $\mathrm{K}$-theory
and traces.  (In fact, this description also captures the inclusion relation for the said modules, and the structure of their
direct sums.)  This result applies to the C$^*$-algebras of minimal diffeomorphisms as these were shown to have stable rank one by N. C.
Phillips (\cite{P5}).  Our classification is quite practical, as the $\mathrm{K}$-theory of these algebras is accessible through the 
Pimsner-Voiculescu sequence and their traces have a nice description as the invariant measures on the manifold $M$.  
The classification of Hilbert modules obtained is analogous to the classification of W$^*$-modules over a $\mathrm{II}_1$
factor.  (See \cite{BT} and Subsections \ref{hilbert} and \ref{selfadjoint}.)  Finally, we note that Jacob has recently
obtained a description of the natural metric on the space of unitary orbits of self-adjoint elements in a unital simple ASH algebra 
under certain assumptions, one of which is strict comparison.  This gives another application of Theorem \ref{master} (\cite{J}). 

It was shown in \cite[Theorem 6.4]{BPT} that if the structure theorem for the Cuntz semigroup alluded to above holds for $A$, then the
lower semicontinuous dimension functions on $A$ are weakly dense in the space of all dimension functions on $A$, confirming
a conjecture of Blackadar and Handelman from the early 1980s.  This conjecture 
therefore holds for the algebras of Theorem \ref{master}.  (See Subsections \ref{cuntz} and \ref{bh}.)

If $A$ is a unital stably finite C$^*$-algebra, then one can define a nonnegative real-valued invariant called the
{\it radius of comparison} which measures the extent to which the order structure on the Cuntz semigroup of $A$ is 
determined by (quasi-)traces.  This invariant has proved useful in the matter of distinguishing simple separable amenable
C$^*$-algebras both in general (\cite{To4}) and in the particular case of minimal C$^*$-dynamical systems (\cite{kg}). 
The proof of Theorem \ref{master} follows from a sharp upper bound that we obtain for the radius of comparison of a 
recursive subhomogeneous C$^*$-algebra.  This bound generalises and improves substantially upon our earlier bound for
homogeneous C$^*$-algebras (\cite{To5}).  In addtion to being crucial for the proof of Theorem \ref{master}, this
bound has other applications.  We use it to prove that the range of the radius of comparison is exhausted by
simple C$^*$-algebras, and that there are uncountably many non-Morita-equivalent simple separable
amenable C$^*$-algebras which all have the same $\mathrm{K}$-theory and tracial state space (Theorem \ref{anticlass}). 
This last result is proved using approximately homogenenous (AH) algebras of unbounded dimension growth, and so
may be viewed as a strong converse to the Elliott-Gong-Li classification of simple AH algebras with no dimension growth (\cite{EGL}). 
It can also be viewed as a C$^*$-algebraic analogue of McDuff's uncountable family of pairwise non-isomorphic $\mathrm{II}_1$
factors (\cite{Mc}).  (See Subsections \ref{rcsec} and \ref{anticlasssec}.)

W. Winter has recently announced a 
proof of $\mathcal{Z}$-stability for a class of C$^*$-algebras which includes unital simple direct limits of recursive 
subhomogeneous C$^*$-algebras with no dimension growth, leading to an alternative proof of Theorem \ref{master} under the
stronger hypothesis of no dimension growth.  Those working on G. A. Elliott's classification program for separable amenable
C$^*$-algebras suspect that the conditions of slow dimension growth and no dimension growth are equivalent, but
this problem remains open even for AH C$^*$-algebras.  Gong has shown that no dimension
growth and a strengthened version of slow dimension growth are equivalent for unital simple AH algebras, an
already difficult result (see \cite{G}).  

The paper is organised as follows:  Section 2 collects our basic definitions and preparatory results;  Section 3 
establishes a relative comparison theorem in the Cuntz semigroup of a
commutative C$^*$-algebra;  Section 4 applies the said comparison theorem to obtain sharp bounds on the radius
of comparison of a recursive subhomogeneous algebra;  Section 5 describes our applications in detail.

\vspace{2mm}
\noindent
{\bf Acknowledgements.}  Part of this work was carried out at the Fields Institute during its Thematic Program on Operator
Algebras in the fall of 2007.  We are grateful to that institution for its support.  We would also like to thank N. P. Brown
and N. C. Phillips for several helpful conversations.

\section{Preliminaries}\label{prelim}

\subsection{The Cuntz semigroup}\label{cuntzdef}
Let $A$ be a $C^*$-algebra, and let $\mathrm{M}_n(A)$ denote the $n \times n$ 
matrices whose entries are elements of $A$.  If $A = \mathbb{C}$, then we may simply write $\mathrm{M}_n$.

Let $\mathrm{M}_{\infty}(A)$ denote the algebraic limit of the
direct system $(\mathrm{M}_n(A),\phi_n)$, where $\phi_n:\mathrm{M}_n(A) \to \mathrm{M}_{n+1}(A)$
is given by
\[
a \mapsto \left( \begin{array}{cc} a & 0 \\ 0 & 0 \end{array} \right).
\]
Let $\mathrm{M}_{\infty}(A)_+$ (resp. $\mathrm{M}_n(A)_+$)
denote the positive elements in $\mathrm{M}_{\infty}(A)$ (resp. $\mathrm{M}_n(A)$). 
Given $a,b \in \mathrm{M}_{\infty}(A)_+$, we say that $a$ is {\it Cuntz subequivalent} to
$b$ (written $a \precsim b$) if there is a sequence $(v_n)_{n=1}^{\infty}$ of
elements of $\mathrm{M}_{\infty}(A)$ such that
\[
||v_nbv_n^*-a|| \stackrel{n \to \infty}{\longrightarrow} 0.
\]
We say that $a$ and $b$ are {\it Cuntz equivalent} (written $a \sim b$) if
$a \precsim b$ and $b \precsim a$.  This relation is an equivalence relation,
and we write $\langle a \rangle$ for the equivalence class of $a$.  The set
\[
W(A) := \mathrm{M}_{\infty}(A)_+/ \sim
\] 
becomes a positively ordered Abelian monoid when equipped with the operation
\[
\langle a \rangle + \langle b \rangle = \langle a \oplus b \rangle
\]
and the partial order
\[
\langle a \rangle \leq \langle b \rangle \Leftrightarrow a \precsim b.
\]
In the sequel, we refer to this object as the {\it Cuntz semigroup} of $A$.  
(It was originally introduce by Cuntz in \cite{Cu1}.)  The
Grothendieck enveloping group of $W(A)$ is denoted by $\mathrm{K}_0^*(A)$.

Given $a \in \mathrm{M}_{\infty}(A)_+$ and $\epsilon > 0$, we denote by 
$(a-\epsilon)_+$ the element of $C^*(a)$ corresponding (via the functional
calculus) to the function
\[
f(t) = \mathrm{max}\{0,t-\epsilon\}, \ t \in \sigma(a).
\]
(Here $\sigma(a)$ denotes the spectrum of $a$.)  
The proposition below collects some facts about Cuntz subequivalence due 
to Kirchberg and R{\o}rdam.

\begin{props}[Kirchberg-R{\o}rdam (\cite{KR}), R{\o}rdam (\cite{R4})]\label{basics}
Let $A$ be a $C^*$-algebra, and $a,b \in A_+$.  
\begin{enumerate}
\item[(i)] $(a-\epsilon)_+ \precsim a$ for every $\epsilon > 0$.
\item[(ii)] The following are equivalent:
\begin{enumerate}
\item[(a)] $a \precsim b$;
\item[(b)] for all $\epsilon > 0$, $(a-\epsilon)_+ \precsim b$;
\item[(c)] for all $\epsilon > 0$, there exists $\delta > 0$ such that $(a-\epsilon)_+ \precsim (b-\delta)_+$.
\end{enumerate}
\item[(iii)] If $\epsilon>0$ and $||a-b||<\epsilon$, then $(a-\epsilon)_+ \precsim b$.
\end{enumerate}
\end{props}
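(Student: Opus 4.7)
The plan for (i) is to realise the Cuntz subequivalence with a single witness inside $C^*(a)$. Define the continuous function $g:[0,\|a\|]\to[0,\infty)$ by $g(t) = \bigl((t-\epsilon)_+/t\bigr)^{1/2}$ for $t>0$ and $g(0)=0$; continuity at the origin holds because $(t-\epsilon)_+$ vanishes on $[0,\epsilon]$. The pointwise identity $g(t)^2\, t = (t-\epsilon)_+$ translates, via functional calculus, into $g(a)\,a\,g(a) = (a-\epsilon)_+$, so the constant sequence $v_n \equiv g(a)$ certifies $(a-\epsilon)_+ \precsim a$.

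Part (iii) is the key technical step and the principal obstacle. The naive attempt---take $v=g(a)$ from (i) and note $\|vbv^* - (a-\epsilon)_+\|\leq\|v\|^2\|a-b\|<\epsilon$---only yields the weaker conclusion $(a-2\epsilon)_+\precsim b$, because converting norm-closeness into Cuntz subequivalence through (iii) itself would cost another factor of $\epsilon$. To obtain the sharp statement, I would instead pick an auxiliary parameter $\delta\in(\|a-b\|,\epsilon)$, form the continuous function $f(t)=\bigl((t-\epsilon)_+/(t-\delta)_+\bigr)^{1/2}$ on $(\delta,\infty)$ extended by zero, and exploit the identity $f(a)^2(a-\delta)_+ = (a-\epsilon)_+$ together with the operator inequality $b\geq a-\delta\cdot 1$. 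The transfer from the ambient inequality to Cuntz subequivalence of $(a-\epsilon)_+$ into the hereditary subalgebra of $b$ can then be carried out via Pedersen's theorem (that $c\leq d$ implies $c\precsim d$ in a C$^*$-algebra) after passing through a suitable hereditary cutdown, or via a polar-decomposition argument in $A^{**}$. The essential point is that the slack $\epsilon-\delta$ must be absorbed by the interpolating function $f$ rather than lost in a lossy perturbation estimate.

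For (ii), the three equivalences reduce to (iii) together with standard diagonal selections. For (a)$\Rightarrow$(b), pick $n$ with $\|v_n b v_n^* - a\|<\epsilon$ and apply (iii) to the pair $(a,v_nbv_n^*)$, composing with the standard fact $v_n b v_n^*\precsim b$ (which follows from $xbx^*\sim b^{1/2}x^*xb^{1/2}\leq\|x\|^2 b\sim b$). For (b)$\Rightarrow$(a), invoke (b) at thresholds $1/n$ to find $w_n$ with $\|w_n b w_n^* - (a-1/n)_+\|<1/n$, so that $\|w_n b w_n^* - a\|\to 0$. The implication (c)$\Rightarrow$(b) is immediate. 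For (b)$\Rightarrow$(c), apply (b) at threshold $\epsilon/2$ to find $w$ with $\|wbw^* - (a-\epsilon/2)_+\|<\epsilon/4$; since $\|b-(b-\delta)_+\|\leq\delta$, for $\delta>0$ sufficiently small one still has $\|w(b-\delta)_+w^* - (a-\epsilon/2)_+\|<\epsilon/2$, and combining (iii) with the identity $\bigl((a-\epsilon/2)_+-\epsilon/2\bigr)_+ = (a-\epsilon)_+$ yields $(a-\epsilon)_+\precsim w(b-\delta)_+w^*\precsim (b-\delta)_+$.
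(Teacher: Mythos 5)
The paper does not prove Proposition~\ref{basics}; it is stated with attribution to Kirchberg--R{\o}rdam and R{\o}rdam, so there is no in-paper argument to compare against, and your proof must stand on its own.

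Parts~(i) and~(ii) are fine: (i) is the straightforward functional-calculus identity, and your reduction of (ii) to (iii) via the standard facts $xbx^* \sim b^{1/2}x^*xb^{1/2} \precsim b$ and $((a-\epsilon/2)_+ - \epsilon/2)_+ = (a-\epsilon)_+$ is correct.

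Part~(iii) identifies the right ingredients but stops short of assembling them, and the assembly is exactly where the subtlety lives. From $\|a-b\| < \delta$ you get $a - \delta\cdot 1 \leq b$, but you cannot pass from this to $(a-\delta)_+ \leq b$ because $a - \delta$ has a negative part; so the identity $f(a)^2(a-\delta)_+ = (a-\epsilon)_+$ does not combine with $a-\delta \leq b$ by mere substitution. What rescues the argument --- and what your write-up omits --- is that $f$ vanishes on $[0,\epsilon] \supset [0,\delta]$, while $(t-\delta)_-$ is supported on $[0,\delta]$; hence $f(a)(a-\delta)_- f(a) = 0$, and therefore
\[
(a-\epsilon)_+ \;=\; f(a)(a-\delta)_+ f(a) \;=\; f(a)(a-\delta)f(a) \;\leq\; f(a)\, b\, f(a) \;\precsim\; b.
\]
Here the inequality uses $a-\delta \leq b$ conjugated by $f(a)$, the final Cuntz relation uses $f(a)bf(a) \sim b^{1/2}f(a)^2 b^{1/2} \leq b$, and the passage from $\leq$ to $\precsim$ is the lemma you call Pedersen's theorem, which you should state and cite precisely since it carries the weight of the step. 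Without the observation that $f(a)$ annihilates $(a-\delta)_-$, the phrase ``transfer \ldots via Pedersen's theorem after passing through a suitable hereditary cutdown'' is a gesture rather than a proof.
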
 

\subsection{Dimension functions and strict comparison}\label{dimfunc}
Now suppose that $A$ is unital and stably finite, and denote by $\mathrm{QT}(A)$
the space of normalised 2-quasitraces on $A$ (v. \cite[Definition II.1.1]{BH}).
Let $S(W(A))$ denote the set of additive and order preserving maps $d:W(A) \to \mathbb{R}^+$
having the property that $d(\langle 1_A \rangle) = 1$.
Such maps are called {\it states}.  Given $\tau \in \mathrm{QT}(A)$, one may 
define a map $d_{\tau}:\mathrm{M}_{\infty}(A)_+ \to \mathbb{R}^+$ by
\begin{equation}\label{ldf}
d_{\tau}(a) = \lim_{n \to \infty} \tau(a^{1/n}).
\end{equation}
This map is lower semicontinous, and depends only on the Cuntz equivalence class
of $a$.  It moreover has the following properties:
\vspace{2mm}
\begin{enumerate}
\item[(i)] if $a \precsim b$, then $d_{\tau}(a) \leq d_{\tau}(b)$;
\item[(ii)] if $a$ and $b$ are mutually orthogonal, then $d_{\tau}(a+b) = d_{\tau}(a)+d_{\tau}(b)$;
\item[(iii)] $d_{\tau}((a-\epsilon)_+) \nearrow d_{\tau}(a)$ as $\epsilon \to 0$. 
\end{enumerate}
\vspace{2mm}
Thus, $d_{\tau}$ defines a state on $W(A)$.
Such states are called {\it lower semicontinuous dimension functions}, and the set of them 
is denoted $\mathrm{LDF}(A)$. $\mathrm{QT}(A)$ is a simplex (\cite[Theorem II.4.4]{BH}), 
and the map from $\mathrm{QT}(A)$ to $\mathrm{LDF}(A)$ defined by (\ref{ldf}) is 
bijective and affine (\cite[Theorem II.2.2]{BH}).  A {\it dimension function} on
$A$ is a state on $\mathrm{K}_0^*(A)$, assuming that the latter has been equipped
with the usual order coming from the Grothendieck map.  The set of dimension functions
is denoted $\mathrm{DF}(A)$.  $\mathrm{LDF}(A)$ is a (generally proper) face of
$\mathrm{DF}(A)$.  If $A$ has the property that $a \precsim b$ whenever $d(a) < d(b)$
for every $d \in \mathrm{LDF}(A)$, then we say that $A$ has 
{\it strict comparison of positive elements}.




\subsection{Preparatory results}
We now recall and improve upon some results that will be required in the sequel.

\begin{dfs}[cf. Definition 3.4 of \cite{To5}]\label{wellsup}
Let $X$ be a compact Hausdorff space, and let $a \in \mathrm{M}_n(\mathrm{C}(X))$ 
be positive with (lower semicontinuous)
rank function $f:X \to \mathbb{Z}^+$ taking values in $\{n_1,\ldots,n_k\}$,
$n_1 < n_2 < \cdots < n_k$.  Set 
\[
F_{i,a} := \{ x \in X | f(x) = n_i \}.
\] 

We say that $a$ is well supported if, for each $1 \leq i \leq k$, there is a
projection $p_i \in \mathrm{M}_n(\mathrm{C}(\overline{F_{i,a}}))$ such that 
\[
\lim_{r \to \infty} a(x)^{1/r} = p_i(x), \ \forall x \in F_{i,a},
\]
and $p_i(x) \leq p_j(x)$ whenever $x \in \overline{F_{i,a}} \cap \overline{F_{j,a}}$ and
$i \leq j$.
\end{dfs}

\begin{thms}[T, cf. Theorem 3.9 of \cite{To5}]\label{wellapprox}
Let $X$ be a compact Hausdorff space, and let $a \in \mathrm{M}_n(\mathrm{C}(X))_+$ and $\epsilon>0$ be given.
It follows that there is $\tilde{a} \in \mathrm{M}_n(\mathrm{C}(X))_+$ with the following properties:
\begin{enumerate}
\item[(i)] $\tilde{a} \leq a$;
\item[(ii)] $||a - \tilde{a}||< \epsilon$;
\item[(iii)] $\tilde{a}$ is well supported.
\end{enumerate}
\end{thms}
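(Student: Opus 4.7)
I would construct $\tilde a$ via continuous functional calculus, setting $\tilde a = f(a)$ for a suitably chosen continuous function $f \colon [0, \|a\|] \to [0, \|a\|]$ with $0 \leq f(t) \leq t$ and $|f(t) - t| < \epsilon$. Conditions (i) and (ii) then follow immediately from elementary functional calculus on positive elements, so the entire content of the theorem lies in choosing $f$ so that (iii) holds.

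The strategy rests on the standard fact that the eigenvalue functions $\lambda_1(x) \geq \cdots \geq \lambda_n(x)$ of $a(x)$ are continuous on $X$. If $f$ vanishes on some interval $[0, \delta]$ and is strictly positive above, then the rank of $\tilde a(x)$ equals $\#\{j : \lambda_j(x) > \delta\}$, and the support projection of $\tilde a(x)$ is the spectral projection $\mathbf{1}_{(\delta, \infty)}(a(x))$. On each rank stratum $F_{i, \tilde a}$ this is a projection of constant rank $n_i$ varying continuously in $x$, and the nesting condition $p_i \leq p_j$ for $i \leq j$ at common boundary points reduces to an inclusion of spectral projections for nested cutoffs. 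However, a single threshold $\delta$ may be crossed by eigenvalue functions in complicated ways that obstruct continuous extension to stratum closures, so I would instead take $f$ to be a \emph{staircase} with plateau values $0 < c_1 < \cdots < c_m < \epsilon$, constructed inductively from the highest-rank stratum downward; each plateau $[c_\ell, c_\ell + \eta_\ell]$ has width $\eta_\ell < \epsilon$ (so $f$ stays within $\epsilon$ of the identity) and is chosen to absorb all eigenvalues of $a$ that hover near $c_\ell$ by collapsing them to the constant value $c_\ell$. This forces the support projection on the newly refined stratum to equal $\mathbf{1}_{(c_\ell + \eta_\ell, \infty)}(a)$, which is then continuous across the relevant stratum boundary.

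The main obstacle is the inductive choice of plateau positions, which must be globally consistent and arranged so that each $p_i$ extends to a continuous projection on $\overline{F_{i, \tilde a}}$ and the nesting condition is satisfied at every stratum-boundary intersection. The key technical inputs are compactness of $X$ (ensuring finitely many plateaus suffice), uniform continuity of the eigenvalue functions on $X$, and the fact that continuity of $a$ itself precludes unbounded rotation of eigenspaces except within spectral subspaces of coalescing eigenvalues, so the pathological behavior one might fear at boundary points cannot occur. Once the staircase $f$ is constructed with care, properties (i)--(iii) all hold, with the nesting $p_i(x) \leq p_j(x)$ a direct consequence of the increasing chain of cutoffs $c_1 < c_2 < \cdots < c_m$ and the corresponding decreasing chain of spectral projections of $a(x)$.
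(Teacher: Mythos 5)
There is a genuine gap. Your proposal rests on finding $\tilde a$ of the form $f(a)$ for a single continuous function $f$ applied by functional calculus, but such a $\tilde a$ cannot be well supported in general, and the construction in \cite{To5} (which the paper invokes) is necessarily more involved: the cutoff level it uses varies with the point $x$, so that eigenvalue clusters of $a(x)$ are never split by the cutoff.

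A concrete counterexample to the fixed-$f$ strategy: let $X = \{(z,t) \in \mathbb{C}\times[0,1] : |z|\le t\}$ and $a(z,t) = \begin{pmatrix} t & z \\ \bar z & t \end{pmatrix}$, a positive element of $\mathrm{M}_2(\mathrm{C}(X))$ with eigenvalues $t\pm|z|$. If $f$ vanishes exactly on $[0,\delta]$ for some $\delta > 0$, then $f(a)$ has rank one at every $(z,\delta)$ with small $|z|>0$; these points accumulate at $(0,\delta)$, and the support projection at $(z,\delta)$ is the rank-one projection onto $\mathrm{span}\{(z/|z|,1)^{T}\}$, which has no limit as $z\to 0$, so well-supportedness fails for this stratum. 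If instead $f$ vanishes only at $0$ (as a staircase with all plateau values $c_\ell>0$ does), then $f(a)$ and $a$ have identical support projections, and $a$ itself already exhibits exactly the same discontinuity at $(0,0)$. Hence \emph{no} continuous $f$ produces a well-supported $f(a)$ for this $a$, and the assertion that ``continuity of $a$ itself precludes unbounded rotation of eigenspaces \dots so the pathological behavior one might fear at boundary points cannot occur'' is false: coalescing eigenvalues can straddle any prescribed cutoff level, and then the spectral projections above the cutoff do rotate without limit. The actual construction in \cite{To5} (see the description quoted in the proof of Lemma \ref{wellcut}) avoids this by covering $X$ with finitely many closed sets, on each of which the spectrum of $a$ misses a small interval around a locally chosen cutoff, and patching; the resulting $\tilde a(x) = \bigoplus_i \lambda_i a_i(x)$ is built from an $x$-dependent spectral decomposition of $a(x)$ and is not of the form $f(a)$ for any fixed $f$.
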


\begin{rems} {\rm
In the statement of Theorem 3.9 of \cite{To5}, $X$ is required to be a finite simplicial complex, but this is
only to ensure that some further conclusions about the approximant $\tilde{a}$ can be drawn.  The proof of
this theorem, followed verbatim, also proves Theorem \ref{wellapprox}---one simply ignores all statements which
concern the simplicial structure of $X$.  An alternative proof can be found in \cite{LP}. }
\end{rems}

For our purposes, we require a different and in some ways strengthened version of Theorem \ref{wellapprox}.  
It says that the well-supported approximant $\tilde{a}$ can be obtained as a cut-down of $a$, at the possible expense of condition (i).

\begin{lms}\label{wellcut}
Let $X$, $a$, and $\epsilon$ be as in the statement of Theorem \ref{wellapprox}.  Suppose further that $a$ has norm at most one.  It follows that there is a positive element $h$ of $\mathrm{M}_n(\mathrm{C}(X))$ of norm at most one such that the following statements hold:
\begin{enumerate}
\item[(i)] $||hah-a||<\epsilon$;
\item[(ii)] $||ha-a||<\epsilon/2$ and $||ah-a||<\epsilon/2$;
\item[(iii)] $hah$ is well-supported.
\end{enumerate}
\end{lms}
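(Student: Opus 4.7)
The plan is to engineer $h$ as a functional-calculus cut-off of $a$ and then invoke the spectral argument behind Theorem~\ref{wellapprox} to certify that the resulting cut-down $hah$ is well-supported.

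For a small parameter $\delta>0$ (to be specified), let $f_\delta\colon[0,1]\to[0,1]$ be the piecewise-linear continuous function vanishing on $[0,\delta/2]$, equal to $1$ on $[\delta,1]$, and linear in between, and set $h:=f_\delta(a)$. Because $h$ is a positive contraction commuting with $a$, one has $hah=g(a)$ with $g(t):=tf_\delta(t)^2$. Conditions (i) and (ii) then reduce to the scalar estimates $\sup_{t\in[0,1]}|tf_\delta(t)-t|\le \delta/2$ and $\sup_{t\in[0,1]}|tf_\delta(t)^2-t|\le \delta$, both of which are verified by direct inspection; taking $\delta<\epsilon/2$ secures (i) and (ii) with room to spare.

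For (iii), observe that $g(t)>0$ precisely on $(\delta/2,1]$, so the rank function of $hah(x)=g(a(x))$ equals the number of eigenvalues of $a(x)$ exceeding $\delta/2$. This is lower semicontinuous and takes values in $\{0,1,\ldots,n\}$, giving the finite-value requirement in Definition~\ref{wellsup} for free. The substantive content of (iii) is therefore to produce continuous extensions $p_i\in \mathrm{M}_n(\mathrm{C}(\overline{F_{i,hah}}))$ of the spectral projections $P_{(\delta/2,\infty)}(a(\cdot))$, compatible with the nesting condition on overlaps.

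The main obstacle is this last extension, because for a generic choice of $\delta$ the spectral projections can jump as eigenvalues of $a$ cross the threshold $\delta/2$. The plan is to resolve this by replaying the spectral argument from the proof of Theorem~\ref{wellapprox} (Theorem 3.9 of \cite{To5}, or the version in \cite{LP}). That proof selects cut-off thresholds inductively, one per level of the rank function of $a$, in such a way that the associated spectral projections extend continuously with the correct nesting on overlaps. Since the perturbation $\tilde a$ produced there is itself obtained by spectral modification of $a$, the same choice of thresholds works in our setting, with the only change being that the modification is now implemented as the cut-down $f_\delta(a)\,a\,f_\delta(a)$ rather than as an additive perturbation. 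The verification that the projections $p_i$ supplied by the proof of Theorem~\ref{wellapprox} serve as the required continuous extensions for $hah$ then follows that proof step by step.
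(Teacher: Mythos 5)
There is a genuine gap: you define $h=f_\delta(a)$ as a \emph{single} functional-calculus cut-off of $a$, so $hah=g(a)$ with $g(t)=tf_\delta(t)^2$ and its fibrewise support projection is the spectral projection $P_{(\delta/2,\infty)}(a(x))$. For this to be well-supported you would need these spectral projections to extend continuously to the closures of the level sets of the rank function of $g(a)$, with the nesting condition on overlaps, and you acknowledge this can fail as eigenvalues of $a(x)$ cross the threshold $\delta/2$. No single $\delta$ resolves this in general: eigenvalue crossings can occur along different eigenvector directions at the same spectral height, so the fibrewise spectral projection has essentially distinct limits along different sequences in a level set approaching a common boundary point. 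The substance of Theorem~\ref{wellapprox} is precisely to produce a perturbation whose blocks enjoy \emph{uniform spectral gaps} at \emph{position-dependent} thresholds, and this perturbation $\tilde a$ is \emph{not} of the form $g(a)$ for any single continuous function $g$ --- it is built from a fibre decomposition $a(x)=\bigoplus_i a_i(x)$ by scaling each block by a factor $\lambda_i$ that varies with $x$. Your appeal to ``replaying the spectral argument'' silently substitutes the level-dependent, $x$-dependent thresholds of that construction for your fixed $\delta/2$, which is a different operation; the projections $p_i$ produced in the proof of Theorem~\ref{wellapprox} are the support projections of $\tilde a$, not of $g(a)$, and there is no reason for them to agree.

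The paper's proof resolves this by first obtaining the well-supported $\tilde a$ from Theorem~\ref{wellapprox} (with tolerance $\epsilon/4$) and \emph{then} setting $h=f(\tilde a)$, where $f$ is the piecewise-linear function with a single threshold $\eta$, the uniform spectral gap furnished by the construction of $\tilde a$. Because $\tilde a$ already has a uniform gap at $\eta$ on the blocks with $\lambda_i=1$, and the remaining blocks are uniformly small in norm, the cut-down $hah$ has exactly the same fibrewise support projections as $\tilde a$, which are continuous on the right closed sets by construction. The norm estimates (i) and (ii) are then checked blockwise. In short, the single-threshold cut-off is applied to the \emph{already regularised} element $\tilde a$, not to $a$ itself; that ordering of steps is the idea your argument is missing, and it cannot be bypassed by applying a functional-calculus cut-off directly to $a$.
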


\begin{proof}
Apply Theorem \ref{wellapprox} to $a$ with the tolerance $\epsilon/4$ to obtain the approximant $\tilde{a}$.  This approximant can be described as follows (the details can be found in the proof of \ref{To5}[Theorem 3.9], which is constructive).  At every $x \in X$ there are mutually orthogonal positive elements $a_1(x),\ldots,a_k(x)$ of $\mathrm{M}_n(\mathbb{C})$ such that
\[
a(x) = a_1(x) \oplus a_2(x) \oplus \cdots \oplus a_k(x).
\]
Note that $k$ varies with $x$, and that we make no claims about the continuity of the $a_i$s.  Our approximant then has the form
\[
\tilde{a}(x) = \lambda_1 a_1(x) \oplus \lambda_2 a_2(x) \oplus \cdots \oplus \lambda_k a_k(x),
\]
where $\lambda_i \in [0,1]$.  We also have that $||a_i(x)||<\epsilon/4$ whenever $\lambda_i \neq 1$, and that there is an $\eta>0$, independent of $x$, such that the spectrum of $a_i(x)$ is contained in $[\eta,1]$ whenever $\lambda_i = 1$.

Let $f:[0,1] \to [0,1]$ be the continuous map given by
\[
f(t) = \left\{ \begin{array}{ll} t/\eta, & t \leq \eta \\ 1, & t > \eta \end{array} \right. .
\]
Set 
\[
h(x) = f(\tilde{a}(x)) = f(\lambda_1 a_1(x)) \oplus f(\lambda_2 a_2(x)) \oplus \cdots \oplus 
f(\lambda_k a_k(x)),
\]
and note that $h:X \to \mathrm{M}_n(\mathbb{C})$ is indeed a positive element of $\mathrm{M}_n(\mathrm{C}(X))$ since $\tilde{a}$ is.

Let us first verify that $||ha-a||<\epsilon/2$;  the proof that $||ah-a||<\epsilon/2$ is similar.  For every $x \in X$ we have
\[
h(x)a(x) - a(x)= \bigoplus_{i=1}^k \left( f(\lambda_i a_i(x)) a_i(x)-a_i(x) \right).
\]
If $\lambda_i=1$, then $f(\lambda_i a_i(x)) = p_i(x)$, where $p_i(x)$ is the support projection of $a_i(x)$ in $\mathrm{M}_n(\mathbb{C})$.  Thus, 
\[
f(\lambda_i a_i(x)) a_i(x) - a_i(x) = p_i(x) a_i(x) -a_i(x) = a_i(x)-a_i(x) = 0.
\]
Otherwise, $||a_i(x)||<\epsilon/4$ and $||f(\lambda_i a_i(x))|| \leq 1$, whence
\[
||f(\lambda_i a_i(x)) a_i(x) - a_i(x)|| < \epsilon/4 + \epsilon/4 = \epsilon/2.
\]
We have shown that $||f(\lambda_i a_i(x)) a_i(x) - a_i(x)|| < \epsilon/2$ for each $i \in \{1,\ldots,k\}$, so that $||ha-a||<\epsilon/2$, proving (ii).
For (i), we have
\begin{eqnarray*}
\Vert hah-a \Vert & = & \Vert hah-ha+ha-a \Vert \\
& \leq & \Vert h \Vert \cdot \Vert ah-a \Vert + \Vert ha-a \Vert \\
& < & \epsilon/2 +\epsilon/2 = \epsilon.
\end{eqnarray*}

To complete the proof, we must show that $hah$ is well-supported.  The property of being well-supported depends only on the support 
projection of $hah(x)$ as $x$ ranges over $X$.  It will thus suffice for us to show that the support projection of $hah(x)$ is the 
same as that of $\tilde{a}(x)$, since $\tilde{a}$ is well-supported.  If $\lambda_i$ is zero, then so is $f(\lambda_i a_i(x)) a_i(x) 
f(\lambda_i a_i(x))$, whence both it and $\lambda_i a_i(x)$ have the same support projection, namely, zero.  If $\lambda_i \neq 0$, 
then $f(\lambda_i a_i(x)) a_i(x) f(\lambda_i a_i(x))$ is the image of $\lambda_i a_i(x)$ under the map $t \mapsto f(t)(t/\lambda_i)f(t)$.  
This map is nonzero on $(0,1]$, and it follows that $\lambda_i a_i(x)$ and $f(\lambda_i a_i(x)) a_i(x) f(\lambda_i a_i(x))$ again have 
the same support projection.  Since these statements hold for each $i \in \{1,\ldots,k\}$, we conclude that the support projections of 
$\tilde{a}(x)$ and $hah(x)$ agree for each $x \in X$.
\end{proof}

\begin{props}[Phillips, Proposition 4.2 (1) of \cite{P4}]\label{relproj}
Let $X$ be a compact Hausdorff space of finite covering dimension $d$, and let $E \subset X$ be closed.
Let $p,q \in \mathrm{M}_n(\mathrm{C}(X))$ be projections with the property that
\[
\mathrm{rank}(q(x)) + \frac{1}{2}(d-1) \leq \mathrm{rank}(p(x)), \ \forall x \in X.
\]
Let $s_0 \in \mathrm{M}_n(\mathrm{C}(E))$ be such that $s_0^*s_0 = q|_E$ and $s_0s_0^* \leq p|_E$.  It follows
that there is $s \in \mathrm{M}_n(\mathrm{C}(X))$ such that
\[
s^*s=q, \ \ ss^* \leq p, \ \ \mathrm{and} \ \ s_0=s|_E.
\]
\end{props}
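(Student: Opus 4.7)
The plan is to recast the statement as an extension problem for a section of a Stiefel bundle, and to apply obstruction theory using the connectivity of complex Stiefel manifolds.

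First, I would reduce to the case of constant rank. Since $p$ and $q$ are projections in $\mathrm{M}_n(\mathrm{C}(X))$, their rank functions are locally constant and, by compactness of $X$, take only finitely many values, each on a clopen subset of $X$. Treating each such piece separately, one may assume $\mathrm{rank}(p) \equiv r$ and $\mathrm{rank}(q) \equiv r'$. The projections then define complex vector subbundles $\xi_p, \xi_q$ of the trivial bundle $X \times \mathbb{C}^n$, of respective ranks $r$ and $r'$, and the hypotheses on $s_0$ say precisely that $s_0$ is a continuous isometric bundle embedding $\xi_q|_E \hookrightarrow \xi_p|_E$ over $E$.

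Second, I would reformulate the sought conclusion as the extension from $E$ to $X$ of a section of the fiber bundle $\mathrm{Iso}(\xi_q, \xi_p) \to X$ whose fiber over $x$ is the complex Stiefel manifold $V_{r'}(\mathbb{C}^r)$ of linear isometries $q(x)\mathbb{C}^n \to p(x)\mathbb{C}^n$. A standard computation using the fibration $U(r-r') \to U(r) \to V_{r'}(\mathbb{C}^r)$, together with Bott periodicity in the stable range, shows that $V_{r'}(\mathbb{C}^r)$ is $2(r-r')$-connected, with lowest nonvanishing homotopy $\pi_{2(r-r')+1} = \mathbb{Z}$. The rank hypothesis rearranges to $2(r-r') \geq d-1$, so the fiber is $(d-1)$-connected, which is exactly the connectivity at which cellular obstruction theory permits extension of a section from $E$ over a $d$-dimensional $X$: every potential obstruction class takes values in a homotopy group $\pi_k$ of the fiber with $k \leq d-1$, and therefore vanishes.

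The main obstacle is that $X$ is only assumed to be a compact Hausdorff space of covering dimension $d$, not a CW or simplicial complex, so classical cellular obstruction theory does not apply verbatim. I would handle this either by approximating $X$ by an inverse system of compact polyhedra of dimension at most $d$ obtained as nerves of finite open refinements of order $d+1$ and then extending through the approximations, or, following Phillips' approach in \cite{P4}, by working on a finite open cover of $X$ that trivialises both $\xi_p$ and $\xi_q$, inducting on the pieces of the cover and at each step using the Stiefel connectivity to glue the extension across an overlap. Either route converts the topological input (covering dimension) into the combinatorial data needed to run the obstruction argument.
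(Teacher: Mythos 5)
The paper offers no proof of this proposition; it is quoted directly from Phillips, Proposition 4.2(1) of \cite{P4}, so there is no internal argument of the paper to compare yours against. Your outline is the correct and standard route to such a statement, and appears to be in the same spirit as Phillips' own proof. The reduction to constant rank via clopen decomposition is fine. The reinterpretation as extending a section of the bundle $\mathrm{Iso}(\xi_q,\xi_p)$, with fiber the complex Stiefel manifold $V_{r'}(\mathbb{C}^r)$, is exactly right, and so is the connectivity count: $V_{r'}(\mathbb{C}^r)$ is $2(r-r')$-connected, and the rank hypothesis $r'+(d-1)/2\le r$ gives $2(r-r')\ge d-1$, so the fiber is $(d-1)$-connected; all obstructions in $H^{k+1}(X,E;\pi_k(V_{r'}(\mathbb{C}^r)))$ with $k\le d-1$ vanish, and the higher ones vanish because the cohomological dimension of $X$ is at most $d$.

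You also correctly flag the only genuine technical wrinkle: $X$ is compact Hausdorff of covering dimension $d$, not a CW pair, so cellular obstruction theory is not directly available. Of your two proposed repairs, the second is cleaner and closer to what is actually done in practice: cover $X$ by finitely many open sets trivialising both $\xi_p$ and $\xi_q$, and extend one chart at a time, treating at each stage the already-extended region together with $E$ as the closed set from which to extend. The ingredient you then need to quote explicitly is the extension theorem for finite-dimensional normal spaces (Hu, \emph{Theory of Retracts}, Chapter VI, or equivalents): if $\dim X\le d$, $E\subset X$ is closed, and $Y$ is a $(d-1)$-connected ANR, then every continuous map $E\to Y$ extends over $X$. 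Stiefel manifolds are compact smooth manifolds, hence ANRs, so this applies. With that theorem cited, your sketch is a complete proof. The inverse-limit-of-polyhedra route would also work, but is more awkward since one has to carry the closed set $E$ and the partial section $s_0$ coherently through the inverse system.
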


We record a corollary of Proposition \ref{relproj} for use in the sequel.

\begin{cors}\label{extend1}
Let $X$ be a compact Hausdorff space of covering dimension $d \in \mathbb{N}$, and let $E_1,\ldots,E_k$
be a cover of $X$ by closed sets.  Let $p \in \mathrm{M}_n(\mathrm{C}(X))$ and $q_i \in \mathrm{M}_n(\mathrm{C}(E_i))$
be projections of constant rank for each $i \in \{1,\ldots,k\}$.  Set $n_i = \mathrm{rank}(q_i)$, and assume that $n_1 
< n_2 < \cdots < n_k$.  Assume that $q_i(x) \leq q_j(x)$ whenever $x \in E_i \cap E_j$ and $i \leq j$.  Finally, 
suppose that $n_i-\mathrm{rank}(p) \geq (1/2)(d-1)$ for every $i$.

The following statements hold:
\begin{enumerate}
\item[(i)]  there is a partial isometry $w \in \mathrm{M}_n(\mathrm{C}(X))$ such that
$w^*w = p$ and 
\[
(w w^*)(x) \leq \bigwedge_{\{ i \ | \ x \in E_i\}} q_i(x), \ \forall x \in X;
\]
\item[(ii)] if $Y \subseteq X$ is closed, $p|_Y$ corresponds to a trivial vector bundle, and
\[
p(y) \leq \bigwedge_{\{ i \ | \ y \in E_i\}} q_i(y), \ \forall y \in Y,
\]
then $p|_Y$ can be extended to a projection $\tilde{p}$ on $X$ which also corresponds to a trivial vector bundle and satisfies
\[
\tilde{p}(x) \leq  \bigwedge_{\{ i \ | \ x \in E_i\}} q_i(x), \ \forall x \in X.
\]
\end{enumerate}
\end{cors}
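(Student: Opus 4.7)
The plan is to establish (i) by induction on $k$ using Phillips' Proposition \ref{relproj} at each step to extend a partial isometry over one more closed set, and then to deduce (ii) by running essentially the same induction with an additional ``stage zero'' supplied by the trivialization of $p|_Y$.

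For (i), set $F_j := E_1 \cup \cdots \cup E_j$. I plan to build partial isometries $w_j \in \mathrm{M}_n(\mathrm{C}(F_j))$ such that $w_j^* w_j = p|_{F_j}$ and $(w_j w_j^*)(x) \leq q_i(x)$ for every $i \leq j$ with $x \in E_i$, so that $w := w_k$ yields the required partial isometry on $X$. To pass from $w_{j-1}$ (taking $F_0 = \emptyset$ when $j = 1$) to $w_j$, I would apply Proposition \ref{relproj} inside $E_j$ with target projection $q_j$, source projection $p|_{E_j}$, and prescribed initial data $s_0 := w_{j-1}|_{E_j \cap F_{j-1}}$ on the closed subset $E_j \cap F_{j-1} \subseteq E_j$; the rank hypothesis is exactly the standing assumption $n_j - \mathrm{rank}(p) \geq (d-1)/2$. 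The required compatibility $s_0 s_0^* \leq q_j|_{E_j \cap F_{j-1}}$ is where the monotonicity of the $q_i$ enters: at any point of $E_j \cap E_i$ with $i < j$ the inductive hypothesis gives $s_0 s_0^* \leq q_i$, and the assumption $q_i \leq q_j$ on overlaps then promotes this to $q_j$. Gluing the output of Proposition \ref{relproj} to $w_{j-1}$ along $E_j \cap F_{j-1}$ produces $w_j$, and combining the new bound on $E_j$ with the inductive bounds on $F_{j-1}$ delivers the inductive hypothesis at stage $j$.

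For (ii), the triviality of $p|_Y$ supplies a partial isometry $v_0 \in \mathrm{M}_n(\mathrm{C}(Y))$ with $v_0^* v_0 = 1_m$ (a constant projection of rank $m = \mathrm{rank}(p)$) and $v_0 v_0^* = p|_Y$. I would then run the same induction as in (i), but with $Y$ adjoined to the domain from the outset: construct $v_j \in \mathrm{M}_n(\mathrm{C}(Y \cup F_j))$ extending $v_0$ with $v_j^* v_j = 1_m|_{Y \cup F_j}$ and $v_j v_j^* \leq q_i$ on $E_i \cap (Y \cup F_j)$ for each $i \leq j$. At stage $j$ the initial data on $E_j \cap (Y \cup F_{j-1})$ has range below $q_j$ because on $E_j \cap Y$ this is precisely the hypothesis $p(y) \leq \bigwedge_{\{i \mid y \in E_i\}} q_i(y) \leq q_j(y)$, and on $E_j \cap F_{j-1}$ this is exactly the check carried out in (i). After $k$ steps, set $\tilde p := v_k v_k^*$. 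Then $\tilde p$ is Murray--von Neumann equivalent in $\mathrm{M}_n(\mathrm{C}(X))$ to the trivial projection $1_m$ via $v_k$, so it is a trivial vector bundle of rank $m$; it agrees with $v_0 v_0^* = p|_Y$ on $Y$; and it satisfies the required pointwise bound against the $q_i$.

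The main obstacle is entirely one of bookkeeping: at each inductive step one must verify that the initial data handed to Phillips' Proposition has range below the new target $q_j$ on the overlap, which is exactly the point at which the hypothesis $q_i \leq q_j$ on $E_i \cap E_j$ is consumed. Once this compatibility is in place, each stage is a direct invocation of Proposition \ref{relproj} with our standing rank hypothesis, and parts (i) and (ii) differ only in whether the induction is started from $\emptyset$ or from the preassigned trivializing data on $Y$.
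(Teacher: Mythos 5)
Your proof of part (i) is essentially the paper's own argument: both run an induction over $F_j = E_1 \cup \cdots \cup E_j$, invoking Phillips' Proposition \ref{relproj} on $E_j$ with initial data $w_{j-1}|_{E_j \cap F_{j-1}}$, and the monotonicity $q_i \leq q_j$ on overlaps is exactly where both supply the hypothesis $s_0 s_0^* \leq q_j$. No meaningful difference there.

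Your proof of part (ii) is correct but takes a cleaner route than the paper's. The paper extends the \emph{projection} $p|_Y$ one set at a time (first to $Y \cup E_1$, then iterating), and at each step it must argue that the resulting projection stays trivial; this requires a fussy argument involving Whitney-sum decompositions of the trivial bundles into line bundles and arranging that the partial isometry from Proposition \ref{relproj} respects those decompositions, so that the glued projection $p \vee ss^*$ on $Y \cup E_1$ is again trivial. Your version instead extends a single \emph{partial isometry} $v_0$ (witnessing $p|_Y \sim 1_m$) across the $E_j$ by the same induction as in (i), keeping $v_j^* v_j = 1_m$ throughout; the final $\tilde p := v_k v_k^*$ is then automatically trivial because $v_k$ itself supplies a global frame, and it agrees with $p$ on $Y$ since $v_k|_Y = v_0$. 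This eliminates the line-bundle matching step entirely. The verification that $s_0 s_0^* \leq q_j$ on $E_j \cap (Y \cup F_{j-1})$ splits exactly as you indicate: on $E_j \cap Y$ it is the hypothesis $p(y) \leq \bigwedge q_i(y)$, and on $E_j \cap F_{j-1}$ it is the same monotonicity check from (i). Both proofs consume the rank hypothesis identically ($m + \tfrac{1}{2}(d-1) \leq n_j$ at stage $j$). Your approach buys a shorter and less delicate argument; the paper's buys nothing extra here, and if anything its join formula $\tilde p = p \vee ss^*$ is slightly awkward since $p$ and $ss^*$ need not be comparable off $Y$.
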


\begin{proof}
{\bf (i)} The rank inequality hypothesis and the stability properties of vector bundles imply that there is a partial isometry 
$w_1 \in \mathrm{M}_n(\mathrm{C}(E_1))$ such that $w_1^*w_1 = p$ and $w_1w_1^* \leq q_1$.  Since $q_1(x) \leq q_j(x)$
whenever $x \in E_1 \cap E_j$, we have
\[  
(w_1 w_1^*)(x) \leq \bigwedge_{\{ j \ | \ x \in E_j\}} q_j(x), \ \forall x \in E_1.
\]

Suppose now that we have found a partial isometry $w_i \in \mathrm{M}_n(\mathrm{C}(E_1 \cup \cdots \cup E_i))$ such 
that $w_i^*w_i = p$ and 
\begin{equation}\label{ineq2}
(w_i w_i^*)(x) \leq \bigwedge_{\{ j \ | \ x \in E_j\}} q_j(x), \ \forall x \in E_1 \cup \cdots \cup E_i.
\end{equation}
We may now apply Proposition \ref{relproj} with $X=E_{i+1}$, $E=E_{i+1} \cap (E_1 \cup \cdots \cup E_i)$, and
$s_0 = w_i|_{E_{i+1} \cap (E_1 \cup \cdots \cup E_i)}$ to extend $w_i$ to a partial isometry $w_{i+1} \in
\mathrm{M}_n(\mathrm{C}(E_1 \cup \cdots \cup E_{i+1}))$ which satisfies (\ref{ineq2}) with $i+1$ in place of $i$.
Continuing inductively yields the desired result.

\noindent
{\bf (ii) } We will explain how to extend $p|_Y$ to $\tilde{p}$ defined on $Y \cup E_1$.  The desired result then
follows from iteration of this procedure.

The projection $p|_{Y \cap E_1}$ corresponds to a trivial vector bundle, and is subordinate to $q_1$.  Let $\tilde{q}$ 
be a projection over $Y$ which corresponds to a trivial vector bundle and has the same rank as $p$.  Since both $p|_{Y \cap E_1}$ 
and $\tilde{q}|_{Y \cap E_1}$ correspond to trivial vector bundles,
there is a partial isometry $w \in \mathrm{M}_n(\mathrm{C}(Y \cap E_1))$ such that $ww^* = p|_{Y \cap E_1} \leq q_1|_{Y \cap E_1}$ and 
$w^*w = \tilde{q}|_{Y \cap E_1}$.  We may assume that this partial isometry, viewed as an isomorphism between trivial
vector bundles, respects the decomposition of both bundles into a prescribed direct (Whitney) sum of trivial line bundles;
we moreover assume that these decompositions are the restrictions of similar decompositions for $\tilde{q}$ and $p|_Y$.

Apply Proposition \ref{relproj} with $X = E_1$, $E = Y \cap E_1$, $s_0 = w$, $q=\tilde{q}$ and $p = q_1$.  The resulting
partial isometry $s$ has the following properties:  $ss^*$ is a projection corresponding to a trivial vector bundle over $E_1$, 
$ss^*$ agrees with $p$ on $Y \cap E_1$, and upon viewing $s$ as an isomorphism of vector bundles, the image of the given 
decomposition of $\tilde{q}$ into trivial line bundles extends the similar decomposition of $p$.  It follows that the projection 
\[
\tilde{p}(x) = p(x) \vee ss^*(x), x \in Y \cup E_1,
\]
corresponds to a trivial vector bundle, and extends $p|_Y$.

\end{proof}

The proof of the next lemma is contained in the proof of \cite[Proposition 3.7]{To5}.


\begin{lms}\label{rankpreserve}
Let $X$ be a compact Hausdorff space, and let $a,b \in \mathrm{M}_n(\mathrm{C}(X))$ be positive. 
Suppose that there is a non-negative integer $k$ such that 
\[
\mathrm{rank}(a(x)) + k \leq \mathrm{rank}(b(x)), \ \forall x \in X.
\]
It follows that for every $\epsilon > 0$ there is $\delta > 0$ with the property that
\[
\mathrm{rank}((a-\epsilon)_+(x)) +k \leq \mathrm{rank}((b-\delta)_+(x)), \ \forall x \in X.
\]
\end{lms}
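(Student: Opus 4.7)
The plan is to reduce the global statement to a local one via compactness of $X$, while exploiting the right kind of semicontinuity for the rank functions at play. For a positive $c \in \mathrm{M}_n(\mathrm{C}(X))$, let $\lambda_1(c(x)) \geq \cdots \geq \lambda_n(c(x))$ be the eigenvalues of $c(x)$ arranged in decreasing order with multiplicity; these are continuous functions of $x \in X$ by Weyl's theorem. The function $x \mapsto \mathrm{rank}((a-\epsilon)_+(x))$ counts eigenvalues of $a(x)$ strictly greater than $\epsilon$, and is only lower semicontinuous. The key device is to introduce the auxiliary count
\[
\bar{r}_a(x) := \left|\{i : \lambda_i(a(x)) \geq \epsilon\}\right|,
\]
which is upper semicontinuous (if $\lambda_i(a(x_n)) \geq \epsilon$ along a sequence $x_n \to x_\infty$, then by continuity $\lambda_i(a(x_\infty)) \geq \epsilon$). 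By construction,
\[
\mathrm{rank}((a-\epsilon)_+(x)) \leq \bar{r}_a(x) \leq \mathrm{rank}(a(x)) \qquad (x \in X).
\]

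Now fix $x_0 \in X$. Choose $\delta(x_0) > 0$ strictly less than every nonzero eigenvalue of $b(x_0)$ (take $\delta(x_0)=1$ if $b(x_0)=0$). Continuity of the eigenvalues of $b$ provides an open neighbourhood $W_1$ of $x_0$ on which $\lambda_i(b(x)) > \delta(x_0)$ for $1 \le i \le \mathrm{rank}(b(x_0))$, so that $\mathrm{rank}((b-\delta)_+(x)) \geq \mathrm{rank}(b(x_0))$ for every $x \in W_1$ and every $\delta \leq \delta(x_0)$. Since $\bar{r}_a$ is integer-valued and upper semicontinuous, there is an open neighbourhood $W_2$ of $x_0$ with $\bar{r}_a(x) \leq \bar{r}_a(x_0)$ for $x \in W_2$. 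Set $W_{x_0} := W_1 \cap W_2$. For $x \in W_{x_0}$ and $\delta \leq \delta(x_0)$ we then chain the bounds
\[
\mathrm{rank}((a-\epsilon)_+(x)) + k \leq \bar{r}_a(x) + k \leq \bar{r}_a(x_0) + k \leq \mathrm{rank}(a(x_0)) + k \leq \mathrm{rank}(b(x_0)) \leq \mathrm{rank}((b-\delta)_+(x)),
\]
where the penultimate step invokes the hypothesis at $x_0$.

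The rest is compactness: the open cover $\{W_{x_0} : x_0 \in X\}$ admits a finite subcover $W_{x_1},\ldots,W_{x_N}$, and $\delta := \min_{j} \delta(x_j) > 0$ works uniformly over $X$.

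The main obstacle is conceptual rather than technical: $\mathrm{rank}((a-\epsilon)_+(\cdot))$ has the \emph{wrong} semicontinuity (LSC rather than USC) to directly compare with the LSC function $\mathrm{rank}((b-\delta)_+(\cdot))$ in a neighbourhood of a given point. Replacing the strict threshold by a non-strict one on the $a$ side produces the USC surrogate $\bar{r}_a$; the hypothesis at the central point $x_0$ is strong enough to absorb the slightly enlarged count, and that is precisely what makes the local-to-global argument go through.
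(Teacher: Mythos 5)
Your proof is correct. The paper does not reproduce a proof here but instead cites the proof of Proposition~3.7 in \cite{To5}; the argument there is the same compactness-plus-eigenvalue-semicontinuity device, and your introduction of the upper semicontinuous surrogate $\bar{r}_a$ (counting eigenvalues $\geq \epsilon$ rather than $> \epsilon$) is a clean and entirely standard way to make the local-to-global step close.

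One very minor remark: the phrase ``by Weyl's theorem'' is best read as the Weyl perturbation inequality $|\lambda_i(A)-\lambda_i(B)| \leq \Vert A-B\Vert$ for ordered eigenvalues of self-adjoint matrices, which is exactly what delivers continuity of $x \mapsto \lambda_i(c(x))$; stating that explicitly would avoid ambiguity. Your handling of the degenerate case $b(x_0)=0$ (forcing $k=0$, $a(x_0)=0$) is consistent with the rest of the argument, so no gap arises there.
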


\section{A relative comparison result in $\mathrm{M}_n(\mathrm{C}(X))$}

The goal of this section is to prove the following Lemma.

\begin{lms}\label{main}
There is a natural number $N$ such that the following statement holds:  Let $X$ be a compact metrisable Hausdorff space of finite covering dimension 
$d$, and let $Y \subseteq X$ be closed.   Let $a,b \in \mathrm{M}_n(\mathrm{C}(X)$ be positive and, for a given tolerance 
$1>\epsilon>0$, satisfy
\begin{enumerate}
\item[(i)] $||a(x)-b(x)||< \epsilon$ for each $x \in Y$, and
\item[(ii)] $\mathrm{rank}(a(x)) + (d-1)/2 \leq \mathrm{rank}(b(x))$ for each $x \notin Y$.
\end{enumerate}

It follows that there are positive elements $c,d$ and a unitary element $u$ in $\mathrm{M}_{4n}(\mathrm{C}(X))$ whose restrictions to $Y$ are 
all equal to $\mathbf{1} \in \mathrm{M}_{4n}(\mathrm{C}(Y))$, and which, upon viewing $a$ and $b$ as elements of the upper left $n \times n$ corner
of $\mathrm{M}_{4n}(\mathrm{C}(X))$, satisfy the inequality
\[
||(duc)b(duc)^*-a|| < N\sqrt{\epsilon}.
\]
\end{lms}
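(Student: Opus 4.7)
The approach is to mimic the standard Cuntz factorisation $a = (a^{1/2} u b^{-1/2}) b (a^{1/2} u b^{-1/2})^*$ subject to two constraints: each of the factors $c$, $d$, $u$ must be continuous across $Y$ and equal $\mathbf{1}_{4n}$ there, and $u$ must be a genuine unitary in $M_{4n}$. The boundary constraint is what makes hypothesis (i) useful: on $Y$ the conjugation collapses to the identity, so $(duc)b(duc)^* = b \approx_\epsilon a$ automatically. Hypothesis (ii) combined with Corollary \ref{extend1} supplies the necessary partial isometry off a neighbourhood of $Y$.

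First I would apply Lemma \ref{wellcut} to $a$ and $b$ with tolerance of order $\epsilon$, replacing them by well-supported cut-downs $\tilde a$, $\tilde b$. By Lemma \ref{rankpreserve} and Proposition \ref{basics}, after passing to $(\tilde a - \delta)_+$ and $(\tilde b - \delta)_+$ with $\delta$ a small multiple of $\epsilon$, the rank comparison (ii) persists on the complement of an open neighbourhood $Y'$ of $Y$, while $\|\tilde a(x) - \tilde b(x)\|$ stays of order $\epsilon$ on $\overline{Y'}$ by continuity. Fix a continuous $\phi \colon X \to [0,1]$ with $\phi \equiv 1$ on $Y$ and $\phi \equiv 0$ off $Y'$, and set $Z = X \setminus \mathrm{int}(Y')$.

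On $Z$, Corollary \ref{extend1}(i) produces a partial isometry $w \in M_n(\mathrm{C}(Z))$ taking the support of $(\tilde a - \delta)_+|_Z$ into that of $(\tilde b - \delta)_+|_Z$. I would embed $w$ in the upper-left $2n \times 2n$ block of $M_{4n}$ and complete it to the unitary
\[
v = \begin{pmatrix} w & -(1-ww^*)^{1/2} & 0 \\ (1-w^*w)^{1/2} & w^* & 0 \\ 0 & 0 & \mathbf{1}_{2n} \end{pmatrix} \in M_{4n}(\mathrm{C}(Z)),
\]
whose unitarity follows by a direct calculation from $ww^*w = w$ and its consequences. The unitary $u$ on $X$ is then obtained by interpolating $v$ continuously to $\mathbf{1}_{4n}$ across the transition region, exploiting the extra $2n$ identity block in $M_{4n}$ to trivialise any $K_1$-obstruction coming from the class of $v$. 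Finally set
\[
c = \phi \mathbf{1}_{4n} + (1-\phi) g_\delta(\tilde b)^{1/2}, \qquad d = \phi \mathbf{1}_{4n} + (1-\phi) \tilde a^{1/2},
\]
where $g_\delta$ is a nonnegative continuous function approximating $t \mapsto 1/t$ on $[\delta, \|b\|]$ in the upper-left $n$-block and equal to $1$ on the orthogonal blocks; both are positive and equal $\mathbf{1}_{4n}$ on $Y$.

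The estimate $\|(duc) b (duc)^* - a\| < N\sqrt{\epsilon}$ then splits by region: on $Y$ the bound is $\epsilon$ from (i); on $Z$ the calculation $(duc)b(duc)^* \approx \tilde a^{1/2} v \, p_{(\tilde b - \delta)_+} \, v^* \tilde a^{1/2} \approx \tilde a \approx a$ introduces errors of order $\epsilon$ (from cut-downs and from $g_\delta$) and of order $\sqrt{\epsilon}$ (via $\|\tilde a^{1/2} - a^{1/2}\| \leq \|\tilde a - a\|^{1/2}$ for positive elements); on the transition region $Y' \setminus Z$ the mixture is controlled by the same square-root bound. The main obstacle is the construction of $u$: it must simultaneously equal $\mathbf{1}_{4n}$ on $Y$, equal the support-shifting $v$ off $Y'$, and be a continuous unitary throughout $X$. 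The block form of $v$ makes it genuinely unitary (not merely an isometry), while the $4n$ (rather than $2n$) size provides the additional dimensions needed to connect $v$ continuously to $\mathbf{1}_{4n}$ across the transition region, circumventing connectedness obstructions for $X$ of positive covering dimension. A secondary difficulty is the joint tuning of $\delta$ (as a function of $\epsilon$) and $Y'$ so that the constant $N$ in the final bound is universal, independent of $X$, $Y$, $a$, $b$, and $\epsilon$.
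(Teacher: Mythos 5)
Your outline captures the right high-level shape — cut down to well-supported elements, build a partial isometry off a neighbourhood of $Y$, complete it to a unitary, and close with a functional-calculus estimate — but the proposal elides exactly the steps where the real work sits, and one of them is a genuine gap rather than an omitted detail.

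The main problem is the single-shot invocation of Corollary \ref{extend1}(i) to produce a partial isometry $w$ carrying the support of $(\tilde a - \delta)_+|_Z$ into that of $(\tilde b - \delta)_+|_Z$. That corollary requires the source and target to be \emph{projections} in $M_n(\mathrm{C}(\cdot))$, i.e.\ continuous projection-valued maps of constant rank. But a well-supported positive element does not have a globally continuous support projection: its rank jumps, and the support projection is only continuous on each rank level $E_k$ (respectively $F_k$). So there is no single projection ``the support of $\tilde a|_Z$'' to feed into Corollary \ref{extend1}. This is precisely why the paper needs Lemma \ref{partialiso}: it runs an induction over the rank levels, building compatible partial isometries $v_k$ on each $\overline{E_k}$, matching them on overlaps $\overline{E_i}\cap\overline{E_j}$, and only then gluing them into a unitary. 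That multi-stage construction is the technical heart of the lemma and cannot be replaced by one application of \ref{extend1}(i).

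Two further points you wave at but do not actually resolve. First, the partial isometry you build on $Z$ must be forced to equal the identity (or extend to the identity) on $\partial Z$, otherwise the interpolation to $\mathbf{1}_{4n}$ across the collar region fails; in the paper this compatibility is obtained in Step 1 of Lemma \ref{partialiso} using hypothesis (i) and the upper semicontinuity of the spectral projection $\chi_{[\epsilon/2,1]}(b(\cdot))$, to show that $\mathbf{1}$ already satisfies the required subordination near $Y$. Your $w$ coming out of \ref{extend1}(i) has no such boundary normalisation. Second, ``interpolating $v$ continuously to $\mathbf{1}_{4n}$'' is not automatic; the paper must verify that the unitary has trivial $K_1$ class, appeal to Rieffel's injectivity theorem to get a homotopy, and then splice that homotopy in over the collar. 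The extra block size $4n$ (as opposed to $2n$) is chosen exactly to make the relevant $K_1$ map injective and the stable rank considerations work, and that argument needs to be supplied, not just asserted. Your form for $c$ and $d$ is also roughly in the right spirit, but the paper needs a three-stage modification (with homotopies $h_s$, $r_s$, $w_s$) to keep the estimate under control while smoothly turning $c$ and $d$ into $\mathbf{1}$ near $Y$; the naive affine interpolation with $\phi$ does not obviously give a uniform $O(\sqrt\epsilon)$ error in the transition region, and the Dini-type uniform-convergence argument for the $\delta\to 0$ limit is also missing.
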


The proof of Lemma \ref{main} proceeds in several steps. 

\begin{lms}\label{adjusta}
Let $X$ be a compact metrisable Hausdorff space, and let $Y$ be a closed subset of $X$.  Suppose that we
have positive elements $a,b \in \mathrm{M}_n(\mathrm{C}(X))$, a tolerance $\epsilon>0$, and a natural
number $k$ satisfying
\begin{enumerate}
\item[(i)] $||a|_Y-b|_Y||<\epsilon$, and
\item[(ii)] $\mathrm{rank}(a(x)) + k \leq \mathrm{rank}(b(x)$ for each $x \notin Y$.
\end{enumerate}
It follows that there are a positive element $\tilde{a} \in \mathrm{M}_n(\mathrm{C}(X))$ and 
open neighbourhoods $U_1 \subseteq U_2$ of $Y$ with the following properties:
\begin{enumerate}
\item[(a)] $||a-\tilde{a}||< 4\epsilon$;
\item[(b)] $\overline{U_1} \subseteq U_2$;
\item[(c)] $\tilde{a}(x) = (b(x)-2\epsilon)_+$ for every $x \in \overline{U_2} \backslash U_1$;
\item[(d)] $\mathrm{rank}(\tilde{a}(x)) + k \leq \mathrm{rank}(b(x))$ for each $x \in X \backslash U_1$.
\end{enumerate}
\end{lms}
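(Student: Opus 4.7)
The plan is to build $\tilde a$ as a continuously varying functional-calculus cut-down of a convex combination of $a$ and $b$, driven by a single Urysohn function $\mu$. The key design choice is to couple the cut-off level to $\mu$: the $(b - 2\epsilon)_+$ truncation is fully active only where $\mu = 1$, and fades to no truncation where $\mu = 0$. This coupling is what prevents the pointwise rank from inflating during the interpolation.

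First I would fix the neighbourhoods. By hypothesis (i) and continuity of $x \mapsto \|a(x)-b(x)\|$, there is an open $V \supseteq Y$ with $\|a(x)-b(x)\| < \epsilon$ for all $x \in V$. Using normality of $X$, nest
\[
Y \subseteq U_1 \subseteq \overline{U_1} \subseteq U_2 \subseteq \overline{U_2} \subseteq V
\]
(which yields (b)), and pick $\mu \in \mathrm{C}(X,[0,1])$ with $\mu \equiv 1$ on $\overline{U_2}$ and $\mathrm{supp}(\mu) \subseteq V$. Then define
\[
c(x) = (1-\mu(x))\, a(x) + \mu(x)\, b(x), \qquad \tilde a(x) = \bigl(c(x) - 2\mu(x)\epsilon\bigr)_+,
\]
which is continuous in $x$ by joint norm-continuity of $(y,s) \mapsto (y-s)_+$ on positive operators. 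On $\overline{U_2}$ we have $c = b$ and $\tilde a = (b-2\epsilon)_+$, giving (c); off $V$ we have $\tilde a = a$ exactly. The norm estimate (a) is a two-step triangle inequality: $\|a(x) - c(x)\| \leq \mu(x)\epsilon$ on $V$ and $\|c(x) - \tilde a(x)\| \leq 2\mu(x)\epsilon$, so $\|a - \tilde a\| \leq 3\mu\epsilon \leq 3\epsilon < 4\epsilon$.

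The main obstacle is the rank inequality (d). A naive convex combination $\mu(b - 2\epsilon)_+ + (1 - \mu) a$ would fail here, since its pointwise rank can be as large as $\mathrm{rank}(a) + \mathrm{rank}((b-2\epsilon)_+)$ on the interpolation region, potentially exceeding $\mathrm{rank}(b) - k$. The cut-down formulation resolves this via the following observation: on the range of $\tilde a(x)$, the operator $c(x)$ is at least $2\mu(x)\epsilon$, while $\|c(x) - a(x)\| \leq \mu(x)\epsilon$; hence $a(x) \geq \mu(x)\epsilon$ on that subspace. When $\mu(x) > 0$ this means the range of $\tilde a(x)$ has trivial intersection with $\ker a(x)$, forcing $\mathrm{rank}(\tilde a(x)) \leq \mathrm{rank}(a(x))$; when $\mu(x) = 0$ the inequality is trivial since $\tilde a = a$. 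Because $Y \subseteq U_1$, hypothesis (ii) applies at every $x \in X \setminus U_1$, so $\mathrm{rank}(\tilde a(x)) + k \leq \mathrm{rank}(a(x)) + k \leq \mathrm{rank}(b(x))$, which is (d).
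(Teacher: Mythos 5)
Your proof is correct and achieves the same conclusion as the paper's, via the same basic template --- convex interpolation between $a$ and $b$ followed by a functional-calculus cut-down, controlled by Urysohn functions --- but with a cleaner parametrization that changes where the work happens. The paper uses two control functions: a ``bump'' $f$ (zero on $Y$ and far out, one on $\overline{U_2}\setminus U_1$) producing $a_1=(1-f)a+fb$, and a separate ``step'' $g$ (zero on $Y$, one off $U_1$) producing $\tilde a=(a_1-2\epsilon g)_+$. Because the cut-off level $2\epsilon g$ is decoupled from the interpolation weight $f$, the rank bound in the paper requires a two-case argument (inside and outside a third neighbourhood $U_3$), each case invoking Proposition~\ref{basics}(iii). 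You use a single ramp $\mu$, interpolating from $b$ inward to $a$ outward, and crucially scale the cut-off level as $2\mu\epsilon$, matching the perturbation $\|c-a\|\leq\mu\epsilon$. That coupling is what makes your rank argument a one-liner: every unit vector in the range of $\tilde a(x)$ sees $c(x)$ exceed $2\mu(x)\epsilon$ but $a(x)$ differs from $c(x)$ by less than $\mu(x)\epsilon$, so the range of $\tilde a(x)$ misses $\ker a(x)$, giving $\mathrm{rank}(\tilde a(x))\leq\mathrm{rank}(a(x))$ directly, without invoking the Cuntz-subequivalence machinery. Note that your $\tilde a$ differs from the paper's both at $Y$ (where yours equals $(b-2\epsilon)_+$, the paper's equals $a$) and far from $Y$ (where yours equals $a$ exactly, the paper's equals $(a-2\epsilon)_+$); both are consistent with the stated conclusions, and your norm bound $3\epsilon$ is slightly better than the paper's $4\epsilon$. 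Your cited joint continuity of $(y,s)\mapsto(y-s)_+$ is justified (continuous functional calculus plus continuity of the map $z\mapsto z_+$ on self-adjoints). All told, this is a valid and somewhat streamlined version of the paper's construction.
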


\begin{proof}
By the continuity of $a$ and $b$ we can find open neighbourhoods $U_1 \subseteq U_2 \subseteq U_3$ of $Y$ such that
$\overline{U_1} \subseteq U_2$, $\overline{U_2} \subseteq U_3$, and $||a|_{\overline{U_3}}-b|_{\overline{U_3}}||<(3/2)\epsilon$.  
Let $f:X \to [0,1]$ be a continuous map which is equal to zero on $Y \cup (X \backslash U_3)$ and 
equal to one on $\overline{U_2} \backslash U_1$.  As a first approximation to our desired element $\tilde{a}$, we define
\[
a_1(x) = (1-f(x)) a(x) + f(x) b(x).  
\]
We then have $||a_1|_{\overline{U_3}}-b|_{\overline{U_3}}||<2 \epsilon$ and $||a-a_1||<2 \epsilon$.
Now find a continuous function $g:X \to [0,1]$ which is zero on $Y$, and equal to one on $X \backslash U_1$.
Set $\tilde{a}(x) = (a_1(x)-2\epsilon g(x))_+$.  Thus, conclusions (a) and (b) are satisfied.

For each $x \in \overline{U_2} \backslash U_1$ we have $f(x)=g(x)=1$, so that $a_1(x)=b(x)$ and $\tilde{a}(x)= (b(x)-2\epsilon)_+$.
This establishes part (c) of the conclusion.  

For part (d) of the conclusion we treat two cases.  For $x \in \overline{U_3} \backslash U_1$ we have the estimate $||a_1(x)-a(x)||<2\epsilon$
and the fact that $\tilde{a}(x)=(a_1(x)-2\epsilon)_+$.  Proposition \ref{basics} (iii) then implies that $\tilde{a}(x) \precsim a(x)$,
whence 
\[
\mathrm{rank}(\tilde{a}(x)) \leq \mathrm{rank}(a(x)) \leq \mathrm{rank}(b(x)) - k. 
\]
For $x \in X \backslash U_3$ we have $a_1(x)=a(x)$ and $\tilde{a}(x) = (a_1(x)-2\epsilon)_+$.  Thus, $\tilde{a}(x) \precsim a(x)$ and we
proceed as before.  
\end{proof}

We can now make our first reduction.

\begin{lms}\label{reduction1}
In order to prove Lemma \ref{main}, it will suffice to prove the following statement, hereafter referred to as {\bf (S)}:
Let $X$ be a compact metrisable Hausdorff space of covering dimension $d \in \mathbb{N}$, and let
$Y \subseteq X$ be closed.  Let $1>\epsilon > 0$ be given.  Suppose that $\hat{a},\hat{b} \in \mathrm{M}_n(\mathrm{C}(X))_+$
have the following properties:
\begin{enumerate}
\item[(i)] $||(\hat{a}-\hat{b})|_{\overline{U}}||< \epsilon$ for some open set $U \supseteq Y$;
\item[(ii)] $\hat{b}|_{X \backslash U}$ is well-supported;
\item[(iii)] there are an open set $V \supseteq \overline{U}$ and $\gamma>0$ such that
\[
\hat{a}(x) = (\hat{b}(x)-\gamma)_+, \ \forall x \in \overline{V} \backslash U;
\]
\item[(iv)] 
\[
\mathrm{rank}(\hat{a}(x)) + (d-1)/2 \leq \mathrm{rank}(\hat{b}(x)), \ \forall x \in X \backslash U.
\]
\end{enumerate}
It follows that there are positive elements $\hat{c},\hat{d}$ and a unitary element $v$ in $\mathrm{M}_{4n}(\mathrm{C}(X))$ whose restrictions to 
$\overline{U}$ are all equal to $\mathbf{1} \in \mathrm{M}_{4n}(\mathrm{C}(\overline{U}))$, and which, upon viewing $\hat{a}$ and $\hat{b}$ as elements of 
the upper left $n \times n$ corner of $\mathrm{M}_{4n}(\mathrm{C}(X))$, satisfy the inequality
\[
||(\hat{d}v\hat{c})\hat{b}(\hat{d}v\hat{c})^*-\hat{a}|| < 4\sqrt{\epsilon}.
\]
\end{lms}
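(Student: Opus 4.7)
The plan is to convert the data $(a,b,Y,\epsilon)$ of Lemma \ref{main} into data satisfying the hypotheses of (S), invoke (S), and propagate the resulting estimate back to $a$ and $b$ at the cost of a constant factor in $\sqrt{\epsilon}$. Two preparatory perturbations of the data are needed. First, well-support $b$: apply Lemma \ref{wellcut} (possibly after rescaling to ensure $\|b\|\le 1$) with a tolerance $\epsilon'\le\epsilon$ chosen small enough, via Lemma \ref{rankpreserve}, that the rank hypothesis (ii) of Lemma \ref{main} transfers to $\hat{b}:=hbh$; that is, $\mathrm{rank}(a(x))+(d-1)/2\le\mathrm{rank}(\hat{b}(x))$ for every $x\in X\setminus Y$. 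This is feasible because, in the construction recalled inside the proof of Lemma \ref{wellcut}, only spectral blocks of $b$ of norm at most $\epsilon'/4$ are zeroed out, so $\mathrm{rank}(\hat{b}(x))\ge\mathrm{rank}((b(x)-\epsilon'/4)_+)$. The resulting $\hat{b}$ is globally well-supported, satisfies $\|\hat{b}-b\|\le\epsilon$, and hence $\|(a-\hat{b})|_Y\|<2\epsilon$.

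Next, apply Lemma \ref{adjusta} to the pair $(a,\hat{b})$ with tolerance $2\epsilon$ and $k=(d-1)/2$. This produces a positive element $\hat{a}$ with $\|a-\hat{a}\|<8\epsilon$ together with nested open neighbourhoods $U\subset V$ of $Y$ with $\overline{U}\subset V$, such that $\hat{a}=(\hat{b}-4\epsilon)_+$ on $\overline{V}\setminus U$ (hypothesis (iii) of (S) with $\gamma=4\epsilon$), and the rank condition (iv) of (S) holds on $X\setminus U$ (since $\mathrm{rank}(\hat{a})\le\mathrm{rank}(a)$ there, by Proposition \ref{basics}(iii) and the construction of $\hat{a}$). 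A short inspection of the construction $\hat{a}=(a_1-4\epsilon g)_+$ in the proof of Lemma \ref{adjusta} yields $\|(\hat{a}-\hat{b})|_{\overline{U}}\|\le 8\epsilon$, so hypothesis (i) of (S) holds with tolerance $8\epsilon$ in place of $\epsilon$; hypothesis (ii) is inherited from the global well-supportedness of $\hat{b}$. Invoking (S) with tolerance $8\epsilon$ now produces positive elements $\hat{c},\hat{d}$ (which one may assume to be contractions) and a unitary $v$ in $\mathrm{M}_{4n}(\mathrm{C}(X))$, each restricting to $\mathbf{1}$ on $\overline{U}\supseteq Y$, with
\[
\|(\hat{d}v\hat{c})\hat{b}(\hat{d}v\hat{c})^*-\hat{a}\|<4\sqrt{8\epsilon}.
\]
Setting $c=\hat{c}$, $d=\hat{d}$, $u=v$, the triangle inequality combined with $\|duc\|\le 1$ gives
\[
\|(duc)b(duc)^*-a\|\le\|b-\hat{b}\|+4\sqrt{8\epsilon}+\|a-\hat{a}\|<\epsilon+4\sqrt{8\epsilon}+8\epsilon,
\]
which for $\epsilon<1$ is bounded by $N\sqrt{\epsilon}$ for a universal constant $N$ (for instance $N=21$).

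The main technical obstacle is the very first step: ensuring that the rank hypothesis (ii) of Lemma \ref{main} survives the well-supporting perturbation of $b$. Since well-supporting can zero out the support projections of small-norm spectral blocks of $b$, the tolerance $\epsilon'$ in Lemma \ref{wellcut} must be chosen small enough — relative to the gap in the original rank inequality — that Lemma \ref{rankpreserve} converts the strict rank bound for $b$ into one for $\hat{b}$. Once this calibration is made, the rest is essentially triangle-inequality bookkeeping: tracking how tolerances multiply under Lemmas \ref{wellcut} and \ref{adjusta}, and how the transfer from $(\hat{a},\hat{b})$ back to $(a,b)$ adds only $O(\epsilon)$ error on top of the $O(\sqrt{\epsilon})$ estimate returned by (S).
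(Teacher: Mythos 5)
Your overall architecture mirrors the paper's: perturb $b$ to a well-supported $\hat{b}$, perturb $a$ via Lemma \ref{adjusta}, feed the result to {\bf (S)}, and transfer the estimate back to $(a,b)$.  But there are two genuine gaps, and the first is fatal to the argument as written.

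\textbf{Gap 1: the rank inequality does not transfer to $\hat{b}$.}  You assert that $\epsilon'$ can be chosen small enough that $\mathrm{rank}(a(x))+(d-1)/2\le\mathrm{rank}(\hat{b}(x))$, citing Lemma \ref{rankpreserve}.  But Lemma \ref{rankpreserve} only produces, for each $\mu>0$, a $\delta>0$ with $\mathrm{rank}((a-\mu)_+(x))+(d-1)/2\le\mathrm{rank}((b-\delta)_+(x))$; it says nothing with $\mu=0$, and no such $\delta$ exists in general.  Your observation that $\mathrm{rank}(\hat{b}(x))\ge\mathrm{rank}((b(x)-\epsilon'/4)_+)$ is correct, but it cannot be combined with Lemma \ref{rankpreserve} to get the bound on $\mathrm{rank}(a(x))$ itself: if $b$ has eigenvalues accumulating at $0$ at some $x\notin Y$, then $\mathrm{rank}((b(x)-\delta)_+)<\mathrm{rank}(b(x))$ for every $\delta>0$, and the strict inequality can be destroyed.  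The paper resolves this by replacing $a$ with $(a-\epsilon)_+$ \emph{before} applying Lemma \ref{rankpreserve}; the inequality then transfers to the pair $((a-\epsilon)_+,\hat{b})$, Lemma \ref{adjusta} is applied to $(a-\epsilon)_+$ rather than to $a$, and the extra $\|a-(a-\epsilon)_+\|\le\epsilon$ is absorbed into the final $N\sqrt{\epsilon}$ bound.  This cut-down step is not an optional detail; it is what makes the reduction work.

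\textbf{Gap 2: $\|\hat{d}v\hat{c}\|$ is not controlled.}  You set $c=\hat{c}$ and then estimate
\[
\|(duc)b(duc)^*-a\|\le\|(\hat{d}v\hat{c})(b-\hat{b})(\hat{d}v\hat{c})^*\|+\|(\hat{d}v\hat{c})\hat{b}(\hat{d}v\hat{c})^*-\hat{a}\|+\|\hat{a}-a\|,
\]
and bound the first term by $\|b-\hat{b}\|$ on the grounds that $\hat{c},\hat{d}$ ``may be assumed to be contractions.''  The statement {\bf (S)} does not say this, and in fact it is false for the $\hat{d}$ that the paper's proof of {\bf (S)} constructs: there $\hat{d}=\sqrt{g_\delta(b)}$ (suitably extended) with $\|g_\delta\|_\infty$ of order $1/\delta$ for a small auxiliary $\delta$, so $\|\hat{d}\|$ is large, not $\le 1$.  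So the first term cannot be bounded this way.  The paper avoids the problem entirely by setting $c=\hat{c}h$, whereupon
\[
(duc)b(duc)^*=(\hat{d}v\hat{c})(hbh)(\hat{d}v\hat{c})^*=(\hat{d}v\hat{c})\hat{b}(\hat{d}v\hat{c})^*,
\]
so that the only error terms are $\|(\hat{d}v\hat{c})\hat{b}(\hat{d}v\hat{c})^*-\hat{a}\|$ (supplied by {\bf (S)}) and $\|\hat{a}-a\|$; no bound on $\|\hat{d}v\hat{c}\|$ is ever needed.  (This also explains why the paper applies Lemma \ref{wellcut} only on $X\setminus W$ and blends $h$ with the identity near $Y$: that is what guarantees $h|_Y=\mathbf{1}$, hence $c|_Y=\mathbf{1}$ as required in the conclusion.)

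In short, the missing $(a-\epsilon)_+$ cut-down and the choice $c=\hat{c}h$ are the two small but essential devices that make the paper's reduction close; without them the argument has holes exactly where you waved at them.
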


\begin{proof}

Let $a$ and $b$ be as in the hypotheses of Lemma \ref{main}.  One can immediately find an open set $U \supseteq Y$
such that $||a(x)-b(x)|| \leq \epsilon_0 < \epsilon$ for every $x \in U$.  By Lemma \ref{rankpreserve}, there is a
$\delta>0$ such that 
\[
\mathrm{rank}(a(x)-\epsilon)_+) + (d-1)/2 \leq \mathrm{rank}(b(x)-\delta)_+, \ \forall x \in X.
\]
Set $\eta = \mathrm{min} \{\epsilon-\epsilon_0, \delta\}$.

Fix an open set $W \supseteq Y$ such that $\overline{W} \subseteq U$.  Apply Lemma \ref{wellcut} to $b|_{X \backslash W}$ with the
tolerance $\eta$ to produce a positive element
$\hat{h} \in \mathrm{M}_n(\mathrm{C}(X \backslash W))$ with the properties listed in the conclusion of that lemma.  Fix a continuous map $f:X \to [0,1]$
which is equal to one on $\overline{W}$ and equal to zero on $X \backslash U$.  Set
\[
h(x) = f(x) \mathbf{1}_{\mathrm{M}_n} + (1-f(x)) \hat{h}(x), \ \forall x \in X,
\]
and $\hat{b}(x) = h(x)b(x)h(x)$.  For each $x \in X \backslash U$, we have $f(x)=0$.  It follows that $\hat{b}|_{X \backslash U} = 
(\hat{h}|_{X \backslash U})(b|_{X \backslash U})(\hat{h}|_{X \backslash U})$, whence, by part (i) of the conclusion of Lemma \ref{wellcut}, 
$\hat{b}|_{X \backslash U}$ is well-supported.  

We have
\begin{eqnarray*}
& & ||hbh-b|| \\
& = & \underset{x \in X}{\sup} ||h(x)b(x)h(x) - b(x)|| \\
& = & \underset{x \in X}{\sup} || [f(x)\mathbf{1}+ (1-f(x))\hat{h}(x))]b(x)[(f(x)\mathbf{1}+ (1-f(x))\hat{h}(x))]-b(x)|| < \eta,
\end{eqnarray*}
where the last inequality follows from part (ii) of the conclusion of Lemma \ref{wellcut}.  Since $\eta \leq \epsilon-\epsilon_0$, 
we have $||a|_{\overline{U}}-\hat{b}|_{\overline{U}}||< \epsilon$.  The inequality $\eta \leq \delta$ implies that $||\hat{b}-b||<\delta$.
Combining this fact with part (iii) of Proposition \ref{basics} yields
\[
\mathrm{rank}((a(x)-\epsilon)_+) + (d-1)/2 \leq \mathrm{rank}(b(x)-\delta)_+) \leq \mathrm{rank}(\hat{b}(x)), \ \forall x \in X.
\]

We will now apply Lemma \ref{adjusta} with $\hat{b}$, $(a-\epsilon)_+$, and $2\epsilon$ substituted for $b$, $a$, and $\epsilon$, respectively.
Note that by shrinking $U$ and $W$ above, we may assume that they will serve as the sets $U_2$ and $U_1$ of Lemma \ref{adjusta}, respectively.  Form
the approximant $\tilde{a}$ to $(a-\epsilon)_+$ provided in the conclusion of Lemma \ref{adjusta}, and set $\hat{a}=\tilde{a}$.  Note
that $\Vert \hat{a}- (a-\epsilon)_+ \Vert < 8 \epsilon$.  We have
\begin{eqnarray*}
||(\hat{a}-\hat{b})|_{\overline{U}}|| & \leq & ||(\hat{a} - (a-\epsilon)_+)|_{\overline{U}}|| + ||((a-\epsilon)_+-\hat{b})|_{\overline{U}}|| \\
& < & 2(4 \epsilon) + 2 \epsilon = 10 \epsilon
\end{eqnarray*}
and
\[
\tilde{a}(x) = (\hat{b}(x)-4\epsilon)_+, \ \forall x \in X.
\]  

Our $\hat{a}$ and $\hat{b}$ now satisfy the hypotheses of statement {\bf (S)} with $10 \epsilon$ and $4 \epsilon$ substituted for $\epsilon$ and $\gamma$,
respectively.  Let $\hat{c}$, $\hat{d}$, and $v$ be as in the conclusion of statement {\bf (S)}.  Set $u=v$, $d=\hat{d}$, and $c = \hat{c}h$.  It follows that
\begin{eqnarray*}
||(duc)b(duc)^*-a|| & = & ||(\hat{d}v\hat{c})(hbh)(\hat{d}v\hat{c})^*-a|| \\
& \leq & ||((\hat{d}v\hat{c})\hat{b}(\hat{d}v\hat{c})^* - \hat{a}|| + ||\hat{a}-a|| \\
& < & 40 \sqrt{\epsilon} \Vert + \Vert \hat{a} - (a-\epsilon)_+ \Vert + \Vert a - (a-\epsilon)_+ \Vert \\
& < & 40 \sqrt{\epsilon} + 9\epsilon < 49 \sqrt{\epsilon}.
\end{eqnarray*} 
This shows that if {\bf (S)} holds, then Lemma \ref{main} holds (with $N=49$).
\end{proof}

The next Lemma constructs the unitary $u$ of Lemma \ref{main}.

\begin{lms}\label{partialiso}
Let $X$ be a compact metrisable Hausdorff space of covering dimension $d \in \mathbb{N}$, and let $a,b \in \mathrm{M}_n(\mathrm{C}(X))$
be well-supported positive elements with the property that 
\[
\mathrm{rank}(a(x)) + \frac{1}{2}(d-1) \leq \mathrm{rank}((b-\epsilon)_+(x))
\]
for some $\epsilon>0$ and every $x \in X$.  Suppose further that $a(y) \leq (b(y)-\epsilon)_+$ for each $y$ in the closure of an open subset $Y$ of $X$, 
and that $a$ and $b$ have norm at most one.

For each $k \in \{0,1,\ldots,n\}$, set
\[
E_k = \{x \in Z \ | \ \mathrm{rank}(a(x))=k\}; \ \ F_k = \{x \in Z \ | \ \mathrm{rank}(b(x))=k\}.
\]
For each $x \in E_k$, let $p_k(x)$ be the support projection of $a(x)$;  for each $x \in F_k$ let $q_k(x)$ be the 
support projection of $b(x)$.  Since $a$ and $b$ are well-supported, the continuous projection-valued
maps $x \mapsto p_k(x)$ and $x \mapsto q_k(x)$ can be extended to $\overline{E_k}$ and $\overline{F_k}$, respectively.  We also
denote these extended maps by $p_k$ and $q_k$.

View $\mathrm{M}_n(\mathrm{C}(X))$ as the upper-left $n \times n$ corner of $\mathrm{M}_{4n}(\mathrm{C}(X))$, and let $Z \subseteq Y$ be closed.
It follows that there is a unitary $u \in \mathrm{M}_{4n}(\mathrm{C}(X))$ with the following properties:
\begin{enumerate}
\item[(i)] $u(z) = \mathbf{1}_{4n} \in \mathrm{M}_{4n}(\mathbb{C})$ for each $z \in Z$;
\item[(ii)] 
\[
(u^*p_ku)(x) \leq \bigwedge_{\{j \ | \ x \in \overline{F_j}\} } q_j(x), \ \forall x \in E_k, \ \forall k \in \{0,\ldots,n\};
\]
\item[(iii)] $u$ is homotopic to the unit of $\mathrm{M}_{4n}(\mathrm{C}(X))$.
\end{enumerate} 
\end{lms}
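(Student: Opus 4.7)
The plan is to construct $u$ in two stages: first build a continuous partial isometry $w$ in $\mathrm{M}_n(\mathrm{C}(X))$ realizing the desired pointwise conjugation of support projections, then promote $w$ to a unitary $u \in \mathrm{M}_{4n}(\mathrm{C}(X))$ via the standard $2 \times 2$ matrix construction together with a correction step that enforces $u|_Z = \mathbf{1}_{4n}$.

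\emph{Stage 1: construction of $w$.} I seek $w$ such that $w^*w = p_k$ on each $\overline{E_k}$, $ww^* \leq \bigwedge_{\{j : x \in \overline{F_j}\}} q_j(x)$ pointwise, and $w|_Z$ is the identity-inclusion partial isometry of $p_a|_Z$. The starting data on $Z$ is consistent: the hypothesis $a(y) \leq (b(y)-\epsilon)_+$ on $\overline{Y} \supseteq Z$ forces $p_k(z) \leq q_j(z)$ for each $z \in E_k \cap Z \cap \overline{F_j}$. Proceed by induction on the strata of $a$ in order of increasing rank. At stage $k$, if $w$ has been defined on $T_{k-1} := Z \cup \overline{E_0} \cup \cdots \cup \overline{E_{k-1}}$ with the desired properties, then the compatibility $p_i \leq p_k$ on $\overline{E_i} \cap \overline{E_k}$ for $i \leq k$ ensures that $w|_{T_{k-1} \cap \overline{E_k}}$ has source a subprojection of $p_k$. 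Apply Proposition \ref{relproj} on $\overline{E_k}$ with closed subset $T_{k-1} \cap \overline{E_k}$, source $p_k$, and a target constructed from the cover $\{\overline{F_j} \cap \overline{E_k}\}_j$ via Corollary \ref{extend1}(i); the dimension slack $(d-1)/2$ is supplied by the rank hypothesis $\mathrm{rank}(a) + (d-1)/2 \leq \mathrm{rank}((b-\epsilon)_+)$.

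\emph{Stage 2: promotion to unitary.} Form
\[
\tilde{w} = \begin{pmatrix} w & \mathbf{1}_n - ww^* \\ \mathbf{1}_n - w^*w & w^* \end{pmatrix} \in \mathrm{M}_{2n}(\mathrm{C}(X));
\]
a direct calculation using $ww^*w = w$ gives $\tilde{w}(p_k \oplus 0_n)\tilde{w}^* = ww^* \oplus 0_n \leq \bigl(\bigwedge_j q_j\bigr) \oplus 0_n$. Let $u_1 := \tilde{w}^* \oplus \tilde{w} \in \mathrm{M}_{4n}(\mathrm{C}(X))$; then $u_1^* (p_k \oplus 0_{3n}) u_1 = ww^* \oplus 0_{3n}$ satisfies conclusion (ii), and $u_1$ is null-homotopic by the Whitehead lemma. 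On $Z$, $u_1|_Z = R \oplus R$ where $R$ is the self-adjoint reflection $\tilde{w}|_Z$; since $R^2 = \mathbf{1}_{2n}$, we may enforce $u|_Z = \mathbf{1}_{4n}$ by constructing a correction unitary $C \in \mathrm{M}_{4n}(\mathrm{C}(X))$ with $C|_Z = R \oplus R$ that commutes pointwise with $p_k \oplus 0_{3n}$ and is homotopic to $\mathbf{1}_{4n}$. Such $C$ arises from a stratum-wise rotation $e^{i\theta P_-}$ (where $P_-$ is the $(-1)$-eigenspace projection of $R \oplus R$, which commutes with $p_k \oplus 0_{3n}$ since $R$ commutes with $p_k \oplus 0_n$ on $E_k \cap Z$) inside the commutant of the upper-left projection, extended from $Z$ by a continuous cutoff $\theta: X \to [0, \pi]$ with $\theta|_Z = \pi$ and vanishing outside a small neighborhood of $Z$; the doubling to $\mathrm{M}_{4n}$ provides the room required to accommodate the stratified nature of $P_-$ across strata boundaries of $a$. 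Setting $u := C \cdot u_1$ yields $u|_Z = (R \oplus R)^2 = \mathbf{1}_{4n}$, preserves conclusion (ii) since $C$ commutes with $p_k \oplus 0_{3n}$, and gives conclusion (iii) since both $C$ and $u_1$ are null-homotopic.

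The principal obstacle is Stage 1: propagating the partial isometry across two distinct stratifications (of $a$'s and $b$'s supports) while maintaining the boundary condition on $Z$; the rank hypothesis is entirely consumed by successive applications of Proposition \ref{relproj}, with Corollary \ref{extend1}(i) used to manage the stratified targets. Stage 2 is a matrix construction whose details rely on the doubling into $\mathrm{M}_{4n}(\mathrm{C}(X))$—chosen in the statement precisely to provide the room needed for a boundary-preserving correction commuting with the relevant upper-left projection, thereby enforcing $u|_Z = \mathbf{1}$ exactly while preserving both (ii) and (iii).
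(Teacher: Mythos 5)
Your Stage~1 has a fundamental gap: you propose to construct a single continuous partial isometry $w \in \mathrm{M}_n(\mathrm{C}(X))$ with $w^*w = p_k$ on each $\overline{E_k}$. But this is impossible, since $p_k$ has rank $k$ on $\overline{E_k}$, so $w^*w$ would be a projection-valued function whose rank jumps across strata of $a$, contradicting local constancy of the rank of a continuous projection. The stratified partial isometries obtained from Proposition~\ref{relproj} and Corollary~\ref{extend1} (your ``$v_k$'s'') simply do not glue to a global element of $\mathrm{M}_n(\mathrm{C}(X))$. The whole of Stage~2 ($\tilde{w}$ being a unitary, the reflection $R$, the eigenprojection $P_-$) collapses as a consequence, and $P_-$ has the additional defect of having stratum-dependent rank, so it is not a continuous projection that can be extended off $Z$. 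This is precisely where the paper's Step~3 does substantial work that your proposal omits: rather than trying to glue the $v_k$ directly, the paper inductively builds a partial isometry $w \in \mathrm{M}_{2n}(\mathrm{C}(X))$ whose source is the full constant-rank projection $\mathbf{1}_n$ (using the $(d-1)/2$ rank slack in $\mathrm{M}_{2n}$ to extend across stratum boundaries), so that $w$ is genuinely continuous and each $v_k$ is recovered as $w|_{H_k} r_k$. Only after this ``de-stratification'' is the promotion to a unitary in $\mathrm{M}_{4n}$ possible.

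Your Stage~2 correction step is also structurally different from the paper's and less likely to go through. The paper arranges, from the start of Step~1, that the partial isometry restricts to $\mathbf{1}_n$ on all of $\overline{Y}$; consequently, once $s = (w+w') \oplus (w+w')^*$ is formed, $s|_{\overline{Y}} = \mathbf{1}_n \oplus \tilde{s}$ for a $3n \times 3n$ unitary $\tilde{s}$ that is null-homotopic (a stable-rank argument in $\mathrm{M}_{3n}$). The correction needed to enforce $u|_Z = \mathbf{1}_{4n}$ then acts only on the complementary $3n$-block via a cutoff homotopy, never touching the subspace where $p_k$ lives, so compatibility with conclusion~(ii) is automatic. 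Your proposal instead tries to build a correction $C$ with $C|_Z$ equal to a stratum-dependent reflection that commutes with $p_k \oplus 0_{3n}$; apart from the nonexistence of the underlying $w$, the claim that such a $C$ can be extended continuously off $Z$ while remaining in the commutant of the $p_k$-block is asserted rather than proved, and the actual obstruction (the rank jump of $P_-$) is the same one that sinks Stage~1.
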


\begin{proof}

\noindent
{\bf Step 1.} For each $y \in \overline{Y}$, set $v(y) = \mathbf{1}_n$.   Let us verify conclusion (ii) above
with $v$ in place of $u$ for each $x \in \overline{Y} \cap \overline{E_k}$.  For each $y \in \overline{Y}$ we have $a(y) \leq b(y)$, and so
\[
a(y)^{1/2^n} \leq b(y)^{1/2^n}, \ \forall n \in \mathbb{N}.
\]
It follows that $p_k(y) \leq q_j(y)$ for each $y \in \overline{Y} \cap E_k \cap \overline{F_j}$, and so
\[
(v^*p_kv)(y) = p_k(y) \leq \bigwedge_{\{j \ | \ y \in \overline{F_j} \} } q_j(y)
\]
for each $y \in \overline{Y} \cap E_k$ and $k \in \{1,\ldots,n\}$. 
It remains to prove that the inequality above holds when $y \in \overline{Y} \cap \overline{E_k}$.  

Set $r(x) = \chi_{[\epsilon/2,1]}(b(x))$ for each $x \in X$, so that $r(x)$ dominates the support projection of $(b-\epsilon)_+$ at 
$x$.  It follows that $p_k(x) \leq r(x)$ for each $x \in E_k$.  In fact, 
\begin{equation}\label{ineq1}
p_k(x) \leq r(x) \leq  \bigwedge_{\{j \ | \ x \in \overline{F_j} \} } q_j(x)
\end{equation}
for each $k \in \{0,\ldots,n\}$ and $x \in E_k$, where the second inequality follows from the fact that 
\[
 \bigwedge_{\{j \ | \ x \in \overline{F_j} \} } q_j(x)
\]
is the support projection of $b(x)$ for each $x \in E_k$.  It will suffice
to prove that the first inequality holds for $y \in \overline{Y} \cap \overline{E_k}$.
It is well known that $r(x)$ is an upper semicontinuous projection-valued
map from $X$ into $\mathrm{M}_n(\mathrm{C}(X))$.  Fix $y \in \overline{Y} \cap \overline{E_k}$, and let $(y_n)$ be a sequence in 
$\overline{Y} \cap E_k$ converging to $y$.  Since $p_k(y_n) \leq r(y_n)$ for each $n \in \mathbb{N}$ we have
\[
(p_k(y_n) \xi | \xi ) \leq (r(y_n)\xi|\xi), \ \forall \xi \in \mathbb{C}^n, \ n \in \mathbb{N}.
\]
Now
\[
(p_k(y) \xi | \xi) = \lim_{n \to \infty} (p_k(y_n) \xi | \xi) \leq \limsup_{n \to \infty} (r(y_n) \xi | \xi) \leq (r(y) \xi | \xi), \ \forall \xi \in
\mathbb{C}^n,
\]
where the last inequaltiy follows from the upper semicontinuity of $r$.  It follows that $p_k(y) \leq r(y)$, as required.
 
 \vspace{3mm}
\noindent
{\bf Step 2.} We will construct partial isometries 
$v_k \in \mathrm{M}_n(\mathrm{C}(\overline{E_k} \backslash Y))$ with the following properties:
\begin{enumerate}
\item[(a)] 
\[
(v_k^*p_kv_k)(x) \leq \bigwedge_{\{j \ | \ x \in \overline{F_j} \backslash Y\} } q_j(x), \ \forall x \in \overline{E_k} \backslash Y, 
\ \forall k \in \{1,\ldots,n\};
\]
\item[(b)] the $v_k$s are compatible in the sense that for each $x \in \overline{E_i} \cap \overline{E_j} \backslash Y$ with $i \leq j$,
\[
(v_i^*p_iv_i)(x) = (v_j^*p_iv_j)(x);
\]
\item[(c)] for each $x \in \overline{E_k} \cap \partial Y$, $v_k(x) = p_k(x) = v(x) p_k(x)$.
\end{enumerate}
In the third step of the proof, we will extend the $v$ from Step 1 and the $v_k$s above to produce the unitary $u$ required by the lemma.

We will prove the existence of the required $v_k$s by induction on the number of rank values taken by $a$.
Let us first address the case where $a$ has constant rank equal to $k_0$.  In this case $E_{k_0} = \overline{E_{k_0}} = X$, and $a$ is Cuntz equivalent
to the projection $p_{k_0} \in \mathrm{M}_n(\mathrm{C}(X))$.   We set $v_{k_0}(y) = p_{k_0}(y)$ for each $y \in \partial Y$, thus satisfying
requirements (a) and (c) for these $y$.  (Note that condition (b) is met trivially in the present case.)  Let $j_1 < j_2 < \cdots < j_l$ be the indices 
for which $\overline{F_{j_i}} \neq \emptyset$.  The existence of the required partial isometry extending the definition of $v_{k_0}$ on $\partial Y$ now
follows from repeated application of Proposition \ref{relproj}:  one substitutes $p_{k_0}$ and $q_{j_i}$ for $q$ and $p$, respectively, in the hypotheses
of the said Proposition.

Now let us suppose that we have found partial isometries $v_0,\ldots,v_k$ satisfying (a), (b), and (c) above.  We must construct $v_{k+1}$, assuming $k<n$.
We will first construct $v_{k+1}$ on the boundary 
\[
\overline{E_{k+1}} \cap (\overline{E_1} \cup \overline{E_2} \cup \cdots \cup \overline{E_k}) \cap Y^c.
\]
For $x \in \overline{E_{k+1}} \cap \overline{E_k} \cap Y^c$, we have
\[
(v_k^*p_kv_k)(x) \leq \bigwedge_{\{j \ | \ x \in \overline{F_j} \backslash Y\} } q_j(x).
\]
From (\ref{ineq1}) on $\overline{E_{k+1}}$ we also have that the rank of the right-hand side exceeds that of the
left hand side by at least
\[
\mathrm{rank}(p_{k+1}(x)-p_k(x)) + \frac{1}{2}(d-1).
\]
Working over $\overline{E_{k+1}} \cap \overline{E_k} \cap Y^c$, we have that $(p_{k+1}-p_k)$ is Murray-von Neumann equilvalent
 to a projection $f_{k}$ which is orthogonal to $v_k^*p_kv_k$ and satisfies
\[
f_{k}(x) \leq \bigwedge_{\{j \ | \ x \in \overline{F_j} \backslash Y\} } q_j(x).
\]
(This follows from part (i) of Corollary \ref{extend1}.)
Let $w_k$ be a partial isometry defined over $\overline{E_{k+1}} \cap \overline{E_k} \cap Y^c$ such that $w_k^*(p_{k+1}-p_k)w_k = f_{k}$,
and set $v_{k+1} = v_k + w_k$.  With this definition we have
\[
(v_{k+1}^*p_{k+1}v_{k+1})(x) \leq \bigwedge_{\{j \ | \ x \in \overline{F_j} \backslash Y\} } q_j(x), 
\]
and
\[
(v_k^* p_k v_k)(x) = (v_{k+1}^*p_{k}v_{k+1})(x) 
\]
for each $x \in \overline{E_{k+1}} \cap \overline{E_k} \cap Y^c$.

Let us now show how to extend $v_{k+1}$ one step further, to $E_{k+1} \cap (\overline{E_k} \cup \overline{E_{k-1}}) \cap Y^c$;
its successive extensions to the various 
\[
E_{k+1} \cap (\overline{E_k} \cup \cdots \cup E_{k-j}) \cap Y^c, \ j \in \{1,\ldots,k-1\},
\]
are similar, and the details are omitted.

In this paragraph we work over the set $\overline{E_{k+1}} \cap (\overline{E_{k}} \cup \overline{E_{k-1}}) \cap Y^c$.  We will suppose that this set
contains $\overline{E_{k+1}} \cap \overline{E_k} \cap Y^c$ strictly, for there is otherwise no extension of $v_{k+1}$ to be made.  Over 
$(\overline{E_{k+1}} \cap \overline{E_k} \cap Y^c) \cap E_{k-1}$, we set $w_{k-1} = v_{k+1} (p_{k+1}-p_k)$.  Thus, $w_{k-1}$ is a partial isometry
carrying (the restriction of) $p_{k+1}-p_{k-1}$ to a subprojection of 
\[
Q(x) \stackrel{\mathrm{def}}{=} \left( \bigwedge_{\{j \ | \ x \in \overline{F_j} \backslash Y\} } q_j(x) \right) - (v_{k-1} p_{k-1} v_{k-1}^*)(x), 
\ x \in (\overline{E_{k+1}} \cap \overline{E_k} \cap Y^c) \cap E_{k-1}.
\]
We moreover have the rank inequality
\[
[\mathrm{rank}(Q(x)) - \mathrm{rank}((v_{k-1} p_{k-1} v_{k-1}^*)(x))] - \mathrm{rank}((w_{k-1} w_{k-1}^*)(x)) \geq \frac{1}{2}(d-1).
\]
Applying part (i) of Corollary \ref{extend1}, we extend $w_{k-1}$ to a partial isometry defined on all of $\overline{E_{k+1}} \cap 
\overline{E_{k-1}} \cap Y^c$ which has the property that $(w_{k-1} w_{k-1}^*) \leq Q(x)$.  Finally, set $v_{k+1} = v_{k-1} + w_{k-1}$
on this set.  It is straightforward to check that $v_{k+1}$ has the required properties.

Iterating the arguments above, we have an appropriate definition of $v_{k+1}$ on 
\[
\overline{E_{k+1}} \cap (\overline{E_1} \cup \overline{E_2} \cup \cdots \cup \overline{E_k}) \cap Y^c.
\]
To extend the definition of $v_{k+1}$ from the set above to all of $\overline{E_{k+1}} \cap Y^c$, we simply
apply part (i) of Corollary \ref{extend1}.

\vspace{3mm}
\noindent
{\bf Step 3.}  Set $H_{-1} = \overline{Y}$ and $H_k = E_k \backslash Y$, so that $H_{-1},\ldots,H_n$ is a closed cover of $X$.  
For each $k \in \{-1,0,\ldots,n\}$ we
have a partial isometry $v_k \in \mathrm{M}_n(\mathrm{C}(H_k))$ from Steps 1 and 2 (assuming that $v_{-1} = v = \mathbf{1}$).  
Let $r_k$ denote the source projection
of $v_k$.  Notice that $r_k$ agrees with $p_k$ off $\overline{Y}$.  In this final step of our proof, we will construct the required unitary $u$ in a 
manner which extends the $v_k$:  $(u|_{H_k}) r_k = v_k$.

Suppose that we have found a partial isometry $w_k \in \mathrm{M}_{2n}(\mathrm{C}(H_{-1} \cup \cdots \cup H_k))$ with source projection equal to 
$\mathbf{1}_n$ (i.e., the unit of the upper-left $n \times n$ corner)
and satisfying $(w_k|_{H_j}) r_j = v_j$ for each $j \in \{0,\ldots,k\}$.  Let us show that $k$ can be replaced with $k+1$, and that $w_{k+1}$ may 
moreover be chosen to be an extension of $w_k$. 

Over $(H_{-1} \cup \cdots \cup H_k) \cap H_{k+1}$, $w_k$ carries the projection $\mathbf{1}_n - r_{k+1}$ into a subprojection of $\mathbf{1}_{2n} -
v_{k+1}v_{k+1}^*$.  The rank of the latter projection exceeds that of the former by at least $(d-1)/2$, and so the partial isometry 
$w_k (\mathbf{1}_n-r_{k+1})$ defined over $(H_{-1}\cup \cdots \cup H_k) \cap H_{k+1}$ can be extended to a partial isometry $w^{'}_{k+1}$ defined 
over $H_{k+1}$ which carries $\mathbf{1}_n - r_{k+1}$ into a subprojection of $\mathbf{1}_{2n} - v_{k+1}v_{k+1}^*$ 
(cf. Proposition \ref{relproj}).  Setting $w_{k+1} = 
v_{k+1} + w^{'}_{k+1}$ on $H_{k+1}$ and $w_{k+1} = w_k$ otherwise gives the desired extension.  Iterating this extension process yields a 
partial isometry $w \in \mathrm{M}_{2n}(\mathrm{C}(X))$ with source projection $\mathbf{1}_n$ satisfying $(w|_{H_k}) r_k = v_k$. 

To complete the proof, it will suffice to find a unitary $u \in \mathrm{M}_{4n}(\mathrm{C}(X))$ which is homotopic to the identity (for conclusion (iii)), satisfies $u \mathbf{1}_n = w$ (for conclusion (ii)), and is equal to $\mathbf{1} \in \mathrm{M}_{4n}(\mathbb{C})$ over $Z$ (for conclusion (i)).  
We will find a unitary $s$ satisfying (ii) and (iii), and then modify it to obtain $u$.  

The complement of $\mathbf{1}_n$ in $\mathrm{M}_{2n}(\mathrm{C}(X))$ is Murray-von Neumann
equivalent to the complement of $ww^*$, as both projections have the same $\mathrm{K}_0$-class and are of rank at least $(d-1)/2$.  Let $w^{'}$ be a partial isometry implementing this equivalence.  It follows that $w + w^{'} \in \mathrm{M}_{2n}(\mathrm{C}(X))$ is unitary.  Setting $s = (w+w^{'}) \oplus
(w+w^{'})^*$ yields our precursor to the required unitary $u \in \mathrm{M}_{4n}(\mathrm{C}(X))$---the 
$\mathrm{K}_1$-class of $s$ is zero, so it is homotopic to $\mathbf{1}_{4n}$ by virtue of its rank (\cite[Theorem 10.12]{Ri1}).  
The unitary $s|_{\overline{Y}} \in \mathrm{M}_{4n}(\mathrm{C}(\overline{Y}))$ has the form 
$\mathbf{1}_n \oplus \tilde{s}$, where $\tilde{s}$ is  a $3n \times 3n$ unitary homotopic to the identity.  (This follows
from two facts:  the $\mathrm{K}_1$-class of $\mathbf{1}_n \oplus \tilde{s}$ is zero, and the natural map
$\iota: \mathcal{U}(\mathrm{M}_{3n}(\mathrm{C}(\overline{Y})) \to  \mathcal{U}(\mathrm{M}_{4n}(\mathrm{C}(\overline{Y}))$ given
by $x \mapsto \mathbf{1}_n \oplus x$ is injective by \cite[Theorem 10.12]{Ri1}.) Let
\[
H:\overline{Y} \times [0,1] \to \mathcal{U}(\mathrm{M}_{3n}(\mathbb{C}))
\]
be a homotopy such that $H(y,0) = \tilde{s}(y)$ and $H(y,1) = \mathbf{1}_{3n} \in \mathrm{M}_{3n}(\mathbb{C})$.  
Let $h:\overline{Y} \to [0,1]$ be a continuous map equal to one on Z and equal to 
zero on $\partial Y$.  Finally, define
\[
u(x) = \left\{ \begin{array}{ll} s(x), & x \notin \overline{Y} \\ \mathbf{1}_n \oplus H(x,f(x)), & x 
\in \overline{Y} \end{array}  \right. .
\]
The unitary $u$ is clearly homotopic to $s$, and so satisfies conclusion (iii).  Conclusion (i) holds for $u$ by construction, and conclusion (ii) holds since $u \mathbf{1}_n = s \mathbf{1}_n = w$.
\end{proof}

\begin{lms}\label{extendcomp}
The statement {\bf (S)} (cf. Lemma \ref{reduction1}) holds. 
\end{lms}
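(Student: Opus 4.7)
The plan is to construct $v$ via Lemma \ref{partialiso} and $\hat{c},\hat{d}$ by functional calculus of $\hat{b}$ and $\hat{a}$ respectively, interpolating across the collar $\overline{V}\backslash U$ via a continuous scalar cutoff. As preprocessing: hypothesis (ii) gives $\hat{b}$ well-supported on $X\backslash U$, and since $\hat{a}|_{\overline{V}\backslash U} = (\hat{b}-\gamma)_+$ by (iii), $\hat{a}$ is well-supported there provided $\gamma$ lies below the spectral gap of $\hat{b}$. Lemma \ref{wellcut} applied to $\hat{a}|_{X\backslash\overline{V}}$ at tolerance on the order of $\epsilon$ makes $\hat{a}$ well-supported on all of $X\backslash U$ at a cost of at most $\epsilon$ in norm, and Lemma \ref{rankpreserve} upgrades (iv) to $\mathrm{rank}(\hat{a}) + (d-1)/2 \leq \mathrm{rank}((\hat{b}-\gamma')_+)$ on $X\backslash U$ for some $\gamma'>0$.

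Second, I would apply Lemma \ref{partialiso} with the open set $Y$ of that lemma equal to $V\backslash\overline{U}$ and with a closed inner collar $Z$ of $\partial U$ sitting inside $Y$; hypothesis (iii) provides $\hat{a} = (\hat{b}-\gamma)_+ \leq (\hat{b}-\gamma)_+$ on $\overline{Y}$, as required. The resulting unitary $u \in \mathrm{M}_{4n}(\mathrm{C}(X))$ is $\mathbf{1}_{4n}$ on $Z$, is homotopic to $\mathbf{1}_{4n}$, and satisfies $u q_j u^* \geq p_k$ for all pairs of support projections of $\hat{b}$ and $\hat{a}$ on $X\backslash U$. Using the homotopy from $u$ to $\mathbf{1}_{4n}$ and a continuous cutoff supported in a neighbourhood of $\overline{U}$ disjoint from $X\backslash V$, I would modify $u$ to produce a unitary $v$ with $v|_{\overline{U}} = \mathbf{1}_{4n}$ while preserving the support-projection conjugation property on $X\backslash V$.

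Third, choose a continuous function $\lambda: X \to [0,1]$ equal to $1$ on $\overline{U}$ and $0$ on $X\backslash V$, and define in the upper-left $n\times n$ corner of $\mathrm{M}_{4n}(\mathrm{C}(X))$
\[
\hat{c} = \lambda\mathbf{1}_n + (1-\lambda)g(\hat{b}), \qquad \hat{d} = \lambda\mathbf{1}_n + (1-\lambda)\hat{a}^{1/2},
\]
where $g$ is continuous with $g(\hat{b})^2\hat{b} = \chi_{[\eta/2,1]}(\hat{b})$, the cut-off support projection of $\hat{b}$. Padding by $\mathbf{1}_{3n}$ in the complementary corner ensures $\hat{c}, \hat{d}$ restrict to $\mathbf{1}_{4n}$ on $\overline{U}$.

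Finally, I would estimate $\|(\hat{d}v\hat{c})\hat{b}(\hat{d}v\hat{c})^* - \hat{a}\|$ region by region. On $\overline{U}$ the triple product equals $\hat{b}$, with error bounded by $\epsilon$ via (i); on $X\backslash\overline{V}$ it equals $\hat{a}^{1/2} v \chi_{[\eta/2,1]}(\hat{b}) v^* \hat{a}^{1/2} = \hat{a}$, using that $v\chi_{[\eta/2,1]}(\hat{b})v^* \geq r_a$, the support of $\hat{a}$; on the collar $\overline{V}\backslash U$, one uses $\hat{a} = (\hat{b}-\gamma)_+$ together with the Lipschitz inequality $\|a^{1/2} - b^{1/2}\| \leq \|a-b\|^{1/2}$ for positive $a,b$ to obtain an error on the order of $\sqrt{\epsilon}$. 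The principal obstacle will be controlling this collar estimate uniformly: the triple product depends nonlinearly on $\hat{c}$ and $\hat{d}$, so the interpolation functions $g$ and $\lambda$ must be tuned carefully to keep the total error within $4\sqrt{\epsilon}$.
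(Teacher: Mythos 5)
Your overall architecture (apply Lemma \ref{partialiso} for the unitary, use functional calculus of $\hat b$ for the inner factor and of $\hat a$ for the outer factor, patch to $\mathbf{1}$ over $\overline U$) matches the paper's, but the patching step contains two genuine gaps.

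First, the auxiliary function $g$ you want does not exist: a continuous scalar $g$ with $g(t)^2 t = \chi_{[\eta/2,1]}(t)$ is impossible since the right-hand side is discontinuous at $\eta/2$. Replacing $g(\hat b)^2 \hat b$ with the literal spectral projection $\chi_{[\eta/2,1]}(\hat b)$ requires a uniform spectral gap in $(\,0,\eta/2\,]$, and well-supportedness does not furnish one -- as $x$ approaches a rank-jump boundary $\overline{F_i}\cap\overline{F_j}$ inside $X\setminus U$, some positive eigenvalues of $\hat b(x)$ must tend to $0$. The paper sidesteps this by using a genuinely continuous ramp $g_\delta$ with $t\,g_\delta(t)=f_\delta(t)$ (so $\sqrt{g_\delta(b)}\,b\,\sqrt{g_\delta(b)}=f_\delta(b)$), and then proves that $\sqrt a\,v f_\delta(b)v^*\sqrt a \to a$ \emph{uniformly} on $Z$ via monotonicity in $\delta$ plus Dini's theorem. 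That uniformity argument is the substitute for the spectral gap you are implicitly assuming, and it is not a formality.

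Second, and more seriously, the single linear interpolation
\[
\hat{c}=\lambda\mathbf{1}_n+(1-\lambda)g(\hat b),\qquad \hat{d}=\lambda\mathbf{1}_n+(1-\lambda)\hat a^{1/2}
\]
does not give an $O(\sqrt\epsilon)$ error on the collar $\overline V\setminus U$. The Lipschitz bound $\|\sqrt{x}-\sqrt{y}\|\le\|x-y\|^{1/2}$ helps only when you interpolate between two operators that are already $\epsilon$-close; here you interpolate between $g(\hat b)$ and $\mathbf 1$, and between $\hat a^{1/2}$ and $\mathbf 1$, which are $\Theta(1)$ apart. Since the map $(c,d)\mapsto (dv c)\hat b(dv c)^*$ is nonlinear, the midpoint $\lambda=1/2$ gives a triple product that is a function of $\hat b$ of the form $\tfrac{1}{16}(1+\sqrt{\hat b})^2(1+g(\hat b))^2\hat b$ (using $\hat a=(\hat b-\gamma)_+\approx\hat b$ and $v=\mathbf 1$ on the inner collar); for small positive eigenvalues of $\hat b$ this is bounded away from both $\hat a$ and $\hat b$ by an amount independent of $\epsilon$. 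The paper instead deforms in three stages: (1) replace $\sqrt a$ by $\sqrt b$ on the outer collar, where the two \emph{are} $\sqrt\epsilon$-close; (2) on the region where $v=\mathbf 1$ and both factors are functions of $\hat b$, run the scalar homotopy $h_s,\ g_{\delta,s}$ chosen so that $h_s\cdot g_{\delta,s}\equiv f_\delta$, keeping the triple product identically $f_\delta(\hat b)\hat b$; (3) finish with $r_s=\max\{s,\sqrt{h_1}\},\ w_s=\max\{s,\sqrt{g_{\delta,1}}\}$, whose product stays between $f_\delta$ and $1$, so the triple product stays between $f_\delta(\hat b)\hat b$ and $\hat b$. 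That path is what keeps the error $O(\epsilon)$ at every intermediate point; a straight-line path to $\mathbf 1$ does not.
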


\begin{proof}  {\bf Step 1.}
To avoid cumbersome notation, we use $a$, $b$, $c$, and $d$ in place of their ``hatted'' versions in the hypotheses and conclusion of {\bf (S)}.  We will
first find the unitary $v$ and the positive elements $c$ and $d$ required by the conclusion of {\bf (S)} with two failings:  $c$ and $d$ are not necessarily  
equal to $\mathbf{1} \in \mathbb{M}_{4n}(\mathbb{C})$ at each point of $\overline{U}$, and the estimate
\[
\left| \left| (cvd)b(cvd)^* - a \right| \right| <  4\sqrt{\epsilon}
\]
only holds on $X \backslash U$.  Both of these failings will be attributable to $c$ and $d$ alone, and will be repaired in later Steps 2. and 3.

By combining the hypotheses (i) and (iii) of {\bf (S)}, we may, after perhaps shrinking the set $V$, assume that $\gamma <\epsilon$.  With
this choice of $V$ we also have that hypothesis (i) holds with $\overline{V}$ in place of $\overline{U}$.  We will also weaken hypothesis
(iii) to an inequality.  This has two advantages.  First, by replacing $a$ with $(a-\delta)_+$ for some small $\delta > 0$, we can assume that (iv) holds with $b$ replaced by $(b-\eta)_+$ for some $\gamma > \eta >0$.  Second, we can assume that $a|_{X \backslash U}$ is well-supported by using the following procedure:  let $W \supseteq Y$ be an open set whose closure is contained in $U$;  replace $a$ with a suitably close approximant $\tilde{a}$ on $X \backslash W$, as provided by Lemma \ref{wellcut};  choose a continuous map $f:X \to [0,1]$ which is equal to one on $\overline{W}$ and equal to zero on $X \backslash U$;  replace the original $a$ with the positive element equal to $f(x)a(x) +
(1-f(x))\tilde{a}(x)$ at each $x \in X$.  Let us summarise our assumptions:
\begin{enumerate}
\item[(i)] $||(a-b)|_{\overline{V}}||< \epsilon$ for some open set $V \supseteq Y$;
\item[(ii)] $b|_{X \backslash U}$ and $a|_{X \backslash U}$ are well-supported (and $\overline{U} \subseteq V$);
\item[(iii)] there is $0 < \gamma < \epsilon$ such that
\[
a(x) \leq (b(x)-\eta)_+, \ \forall x \in \overline{V} \backslash U;
\]
\item[(iv)] 
\[
\mathrm{rank}(a(x)) + (d-1)/2 \leq \mathrm{rank}((b-\eta)_+(x)), \ \forall x \in X \backslash U.
\]
\end{enumerate}
Set $Z = X \backslash U$ and $W = V \backslash U$.

For each $k \in \{0,1,\ldots,n\}$, set
\[
E_k = \{x \in Z \ | \ \mathrm{rank}(a(x))=k\}; \ \ F_k = \{x \in Z \ | \ \mathrm{rank}(b(x))=k\}.
\]
For each $x \in E_k$, let $p_k(x)$ be the support projection of $a(x)$.  Similarly, define $q_k(x)$ to be the 
support projection of $b(x)$ for each $x \in F_k$.  Since (the restrictions of) $a$ and $b$ are well-supported on $Z$, the continuous projection-valued
maps $x \mapsto p_k(x)$ and $x \mapsto q_k(x)$ can be extended to $\overline{E_k}$ and $\overline{F_k}$, respectively.  We also
denote these extended maps by $p_k$ and $q_k$.  Let $\widetilde{V_1}$ be an open subset of $X$ such that $\overline{U} \subseteq
\widetilde{V_1} \subseteq \overline{\widetilde{V_1}} \subseteq V$, and set $V_1 = \widetilde{V_1} \cap Z$.  
Apply Lemma \ref{partialiso} with $b|_{Z}$, $a|_{Z}$, $Z$,
$W$, $\overline{V_1}$, and $\eta$ substituted for the variables $b$, $a$, $X$, $Y$, $Z$, and $\epsilon$ in the hypotheses of the lemma, 
respectively.  Let $u$ be the unitary in $\mathrm{M}_{4n}(\mathrm{C}(Z))$
provided by the conclusion of the said lemma.  Define $v \in \mathrm{M}_{4n}(\mathrm{C}(X))$ to be the unitary which is equal to $u$ on
$Z$ and equal to $\mathbf{1} \in \mathrm{M}_{4n}(\mathbb{C})$ at each point of $U$.  This $v$ will serve as the unitary required in 
the conclusion of {\bf (S)}.  We will simply use $v$ in place of $v|_Z$ whenever it is clear that we are working over $Z$.

From conclusion (ii) of Lemma \ref{partialiso} we have
\begin{equation}\label{comp1}
p_k(x) \leq v(x)\left[ \bigwedge_{\{j \ | \ x \in \overline{F_j}\} } q_j(x) \right]v(x)^*, \ \forall x \in E_k, \ \forall k \in \{0,\ldots,n\}.
\end{equation}
For each $\delta>0$ let $f_\delta, g_\delta:[0,1] \to [0,1]$ be given by the formulas
\[
f_\delta(t) = \left\{ \begin{array}{ll} 0, & t \in [0,\delta/2] \\ (2t-\delta)/\delta, & t \in (\delta/2,\delta) \\
1, & t \in [\delta,1] \end{array} \right. ,
\]
and 
\[
g_\delta(t) = \left\{ \begin{array}{ll} 0, & t \in [0,\delta/2] \\ f_\delta(t)/t, & t \in (\delta/2,1] \end{array} \right. .
\]
Note that $f_\delta(t)$ and $g_\delta(t)$ are continuous, and that $tg_\delta(t)  = f_\delta(t)$.

Consider the following product in $\mathrm{M}_{4n}(\mathrm{C}(Z))$:
\begin{equation}\label{prod1}
(\sqrt{a} v \sqrt{g_\delta(b)}) b(\sqrt{a} v \sqrt{g_\delta(b)})^* = (\sqrt{a} v \sqrt{g_\delta(b)}) b (\sqrt{g_\delta(b)} v^* \sqrt{a}).
\end{equation}
As $\delta \to 0$ we have 
\[
[\sqrt{g_\delta(b)} b \sqrt{g_\delta(b)}](x) = f_\delta(b)(x) \to \bigwedge_{\{j \ | \ x \in \overline{F_j}\} } q_j(x), \ \forall x \in Z.
\] 
Thus, by (\ref{comp1}), $[v \sqrt{g_\delta(b)} b \sqrt{g_\delta(b)} v^*](x)$ converges to a projection which dominates the support projection 
of $a(x)$.  It follows that the product (\ref{prod1}), evaluated at $x \in Z$, converges to $a(x)$ as $\delta \to 0$.  We will prove that this
convergence is uniform in norm on $Z$.

If $\delta < \kappa$, then $f_\delta(b) \geq f_\kappa(b)$.  It follows that
\begin{equation}\label{ineq3}
\sqrt{a} v f_\delta(b) v^* \sqrt{a} \geq \sqrt{a} v f_\kappa(b) v^* \sqrt{a}.
\end{equation}
Since $b \leq \mathbf{1}$, we have 
\[
\sqrt{a} v f_\delta(b) v^* \sqrt{a} \leq \sqrt{a} vv^* \sqrt{a} = a,
\]
and similarly for $f_\kappa(b)$.  Combining this with (\ref{ineq3}) yields
\[
0 \leq a-\sqrt{a} v f_\delta(b) v^* \sqrt{a} \leq a-\sqrt{a} v f_\kappa(b) v^* \sqrt{a}.
\]
By positivity,
\begin{equation}\label{ineq4}
\left| \left| a-\sqrt{a} v f_\delta(b) v^* \sqrt{a} \right| \right| \leq \left| \left| a-\sqrt{a} v f_\kappa(b) v^* \sqrt{a} \right| \right|.
\end{equation}
Let $(\delta_n)$ be a sequence of strictly positive tolerances converging to zero.  By
(\ref{ineq4}), 
\[
\left| \left| [a-\sqrt{a} v \sqrt{g_{\delta_n}(b)} b \sqrt{g_{\delta_n}(b)} v^* \sqrt{a}](x) \right| \right| = 
\left| \left| [a-\sqrt{a} v f_{\delta_n}(b) v^* \sqrt{a}](x) \right| \right|
\]
is a monotone decreasing sequence converging to zero for each $x \in Z$.  By Dini's Theorem,
this sequence converges uniformly to zero on $Z$.  For the remainder of the proof we fix $\epsilon > \delta>0$ with
the property that
\begin{equation}\label{est1}
\left| \left| a-\sqrt{a} v \sqrt{g_{\delta}(b)} b \sqrt{g_{\delta}(b)} v^* \sqrt{a} \right| \right| < \epsilon.
\end{equation}
Extend $\sqrt{a}$ and $\sqrt{g_\delta(b)}$ to positive elements $c$ and $d$ in $\mathrm{M}_{4n}(\mathrm{C}(X))$, respectively.  This choice
of $c$ and $d$ completes Step 1.

\vspace{3mm}
\noindent
{\bf Step 2.}  We must now modify our choice of $c$ and $d$ to address their failings, outlined at the beginning of Step 1. 
This modification will be made in three smaller steps.  In a slight abuse of notation, we will use $c$ and $d$ to denote the
successive modifications of the present $c$ and $d$.

For each $x \in \overline{W}$ we have $b(x)-a(x) \geq 0$ and $||b(x)-a(x)|| < \epsilon$.  
It is a straightforward exercise to show that
\[
\left| \left| \sqrt{b(x)} - \sqrt{a(x)} \right| \right| < \sqrt{\epsilon}.
\]
Choose a continuous
map $f_1:Z \to [0,1]$ which is equal to one on $Z \backslash W$ and equal to zero on $\overline{V_1}$.  Set $a_1(x) = f_1(x)\sqrt{a(x)} + 
(1-f_1(x))\sqrt{b(x)}$ for each $x \in Z$, and set $s = v \sqrt{g_{\delta}(b)} b \sqrt{g_{\delta}(b)} v^*$ for brevity.  Note that $||s|| \leq 1$.  
Now for each $x \in \overline{W}$ we have
\begin{eqnarray*}
& & \left| \left| [a_1 s a_1](x) \right| \right| \\ 
& = & \left| \left| [\sqrt{a} + (1-f_1)(\sqrt{b}-\sqrt{a})](x)s(x)[\sqrt{a} + (1-f_1)(\sqrt{b}-\sqrt{a})](x) \right| \right|\\
& = & \left| \left| [\sqrt{a} s \sqrt{a}](x) + r(x) \right| \right|,
\end{eqnarray*}
where $||r(x)|| < 2\sqrt{\epsilon} + \epsilon$.  We revise our definition of $c$ by setting it equal to $a_1$ on $X \backslash U$ and extending it in
an arbitrary fashion to a positive element of $\mathrm{M}_{4n}(\mathrm{C}(X))$.  Combining this new definition of $c$ with (\ref{est1}) above 
we have the estimate
\begin{equation}\label{est2}
\left| \left| [(cvd)b(cvd)^*](x) - a(x) \right| \right| < 2(\sqrt{\epsilon} + \epsilon), \ \forall x \in X \backslash U.
\end{equation}

Choose an open subset $V_2$ of $Z$ such that $\overline{U} \subseteq V_2 \subseteq \overline{V_2} \subseteq V_1$, and a continuous map
$f_2:Z \to [0,1]$ equal to zero on $Z \backslash V_1$ and equal to one on $\overline{V_2}$.  For each $x \in \overline{V_1}$ we have $c(x) =
\sqrt{b(x)}$, $d(x) = \sqrt{g_\delta(b)}$, and $v(x) = \mathbf{1}$, whence
\begin{eqnarray}\label{est3}
\left| \left| [(cvd)b(cvd)^*](x) - a(x) \right| \right| & = & \left| \left| b(x)^2 g_\delta(b)(x) - a(x) \right| \right| \\
& = & \left| \left| b(x)f_\delta(b)(x) - a(x) \right| \right| \\
& \leq & \left| \left| b(x)f_\delta(b)(x) - b(x)\right| \right| + \left| \left| b(x) - a(x) \right| \right| < 2 \epsilon.
\end{eqnarray}
For each $s \in [0,1]$ define
\[
h_s(t) = \left\{ \begin{array}{ll} 2ts/\delta, & t \in [0,\delta/2] \\
(2t-\delta)[(1-s)/(2-\delta)] + s, & t \in (\delta/2,1] \end{array} \right. .
\]
It straightforward to verify that $h_s(t)$ is a homotopy of maps such that 
\[
h_0(t) = t; \ \ h_1(t) =  \left\{ \begin{array}{ll} 2t/\delta, & t \in [0,\delta/2] \\
1, & t \in (\delta/2,1] \end{array} \right. .
\]
Set
\[
g_{\delta,s}(t) = \left\{ \begin{array}{ll} 0, & t \in [0,\delta/2] \\ f_\delta(t)/h_s(t), & t \in (\delta/2,1] \end{array} \right. .
\]
With these definitions we have $h_s(t) g_{\delta,s}(t) = f_\delta(t)$, $\forall s, t \in [0,1]$.  For each $x \in \overline{V_1}$, we adjust our
definitions of $c(x)$ and $d(x)$ as follows:
\[
c(x) = \sqrt{h_{f_2(x)}(b(x))}; \ \  d(x) = \sqrt{g_{\delta,f_2(x)}(b(x))}.
\]
Since $f_2(x) = 0$ on $\partial V_1$, the definitions of $c(x)$ and $d(x)$ are not altered on $\partial V_1$.  Thus, our modified versions of $c$ and
$d$ are still positive elements of $\mathrm{M}_{4n}(\mathrm{C}(Z))$, and the estimate (\ref{est2}) still holds on $Z \backslash V_1$.  For 
$x \in \overline{V_1}$ we have
\begin{eqnarray}\label{est4}
\left| \left| [(cvd)b(cvd)^*](x) - a(x) \right| \right| & = & \left| \left| h_{f_2(x)}(b(x))g_{\delta,f_2(x)}(b(x))b(x) - a(x) \right| \right| \\
& = & \left| \left| f_\delta(b(x))b(x) - a(x) \right| \right| < 2 \epsilon,
\end{eqnarray}
where the last inequality follows from (\ref{est3}) above.  Thus, (\ref{est2}) continues to hold with our new definitions of $c$ and $d$.  

Choose an open subset $V_3$ of $Z$ such that $\overline{U} \subseteq V_3 \subseteq \overline{V_3} \subseteq V_2$, and a continuous map
$f_3:Z \to [0,1]$ equal to zero on $Z \backslash V_2$ and equal to one on $\overline{V_3}$.  For each $s \in [0,1]$ define continuous maps 
$r_s,w_s:[0,1] \to [0,1]$ by 
\[
r_s(t) = \mathrm{max} \left\{ s, \sqrt{h_1(t)} \right\}; \ \ w_s(t) = \mathrm{max} \left\{s, \sqrt{g_{\delta,1}(t)} \right\}.
\]
Thus, $r_s$ and $w_s$ define homotopies of self-maps of $[0,1]$ such that $r_0=h_1$, $w_0=g_{\delta,1}$, and $r_1=w_1= 1$.  For each $x \in \overline{V_2}$
we adjust our definitions of $c(x)$ and $d(x)$ as follows:
\[
c(x) = \sqrt{r_{f_3(x)}(b(x))}; \ \ d(x) = \sqrt{w_{f_3(x)}(b(x))}.
\]
Since $f_3 = 0$ on $\partial V_2$, the definitions of $c(x)$ and $d(x)$ are not altered on $\partial V_2$.  Thus, our modified versions of $c$ and
$d$ are still positive elements of $\mathrm{M}_{4n}(\mathrm{C}(Z))$, and the estimate (\ref{est2}) still holds on $Z \backslash V_2$.  For 
$x \in \overline{V_2}$ we have 
\[
\left| \left| [(cvd)b(cvd)^*](x) - a(x) \right| \right| = \left| \left| r_{f_3(x)}(b(x))w_{f_3(x)}(b(x))b(x) - a(x) \right| \right| < 2 \epsilon
\]
by a functional calculus argument similar to (\ref{est4}) above---one need only observe that
\[
f_\delta(t) \leq r_s(t) w_s(t) \leq 1, \ \forall s,t \in [0,1].
\]
Thus, (\ref{est2}) continues to hold with our new definitions of $c$ and $d$.  Moreover,
we have $c(x) = d(x) = \mathbf{1} \in \mathrm{M}_{4n}(\mathbb{C})$ for each $x \in \overline{V_3}$.  We may thus extend our definitions of $c$ and $d$
to all of $X$ by setting $c(x) = d(x) = \mathbf{1} \in \mathrm{M}_{4n}(\mathbb{C})$ for every $x \in U \cup \overline{V_3}$.  With this final definition
of $c$ and $d$, we see that
\[
\left| \left| [(cvd)b(cvd)^*](x) - a(x) \right| \right| = \left| \left| b(x) - a(x) \right| \right| < \epsilon, \ \forall x \in U \cup \overline{V_3}.
\]
We conclude that the estimate (\ref{est2}) holds on all of $X$, whence {\bf (S)} holds.
\end{proof}

With {\bf (S)} in hand, we have completed the proof of Lemma \ref{main}.

\section{A comparison theorem for recursive subhomogeneous C$^*$-algebras}

\subsection{Background and notation}
Let us recall some of the terminology and results from \cite{P4}.
\begin{dfs}\label{rshdef}
 A {\it recursive subhomogeneous algebra} (RSH algebra) is given by the following recursive definition.
\begin{enumerate} 
    \item[(i)] If $X$ is a compact Hausdorff space and $n \in \mathbb{N}$, then $\mathrm{M}_n(\mathrm{C}(X))$
    is a recursive subhomogeneous algebra.
    \item[(ii)] If $A$ is a recursive subhomogeneous algebra, $X$ is a
    compact Hausdorff space, $X^{(0)} \subseteq X$ is closed, $\phi:A \to \mathrm{M}_k(\mathrm{C}(X^{(0)}))$
    is a unital $*$-homomorphism, and
    $\rho:\mathrm{M}_k(\mathrm{C}(X)) \to \mathrm{M}_k(\mathrm{C}(X^{(0)}))$ is the restriction homomorphism, then
    the pullback
    \[
    A \oplus_{\mathrm{M}_k(\mathrm{C}(X^{(0)}))} \mathrm{M}_k(\mathrm{C}(X))
    =\{(a,f) \in A \oplus \mathrm{M}_k(\mathrm{C}(X)) \ | \
    \phi(a) = \rho(f) \}
    \]
    is a recursive subhomogeneous algebra.
\end{enumerate}
\end{dfs}
\noindent It is clear from the definition above that a C$^*$-algebra $R$ is an RSH algebra if and
only if it can be written in the form
\begin{equation}\label{decomp2}
R = \left[ \cdots \left[  \left[ C_0 \oplus_{C_1^{(0)}} C_1 \right] \oplus_{C_2^{(0)}} C_2 \right]
\cdots \right] \oplus_{C_l^{(0)}} C_l,
\end{equation}
with $C_k = \mathrm{M}_{n(k)}(\mathrm{C}(X_k))$ for compact Hausdorff spaces $X_k$ and integers
$n(k)$, with $C_k^{(0)}=\mathrm{M}_{n(k)}(\mathrm{C}(X_k^{(0)}))$ for compact subsets $X_k^{(0)}
\subseteq X$ (possibly empty), and where the maps $C_k \to C_k^{(0)}$ are always the restriction maps.  
We refer to the expression in (\ref{decomp2}) as a {\it decomposition} for $R$.  Decompositions for RSH
algebras are not unique.

Associated with the decomposition (\ref{decomp2}) are:
\begin{enumerate}
\item[(i)] its {\it length} $l$;
\item[(ii)] its {\it $k^{\mathrm{th}}$ stage algebra}
\[
R_k = \left[ \cdots \left[  \left[ C_0 \oplus_{C_1^{(0)}} C_1 \right] \oplus_{C_2^{(0)}} C_2
\right] \cdots \right] \oplus_{C_k^{(0)}} C_k;
\]
\item[(iii)] its {\it base spaces} $X_0,X_1,\ldots,X_l$ and {\it total space} $\sqcup_{k=0}^l X_k$;
\item[(iv)] its {\it matrix sizes} $n(0),n(1),\ldots,n(l)$ and {\it matrix size function $m:X \to
\mathbb{N}$} given by $m(x) = n(k)$ when $x \in X_k$ (this is called the {\it matrix size of $R$ at $x$});
\item[(v)] its {\it minimum matrix size $\mathrm{min}_k n(k)$} and {\it maximum matrix size $\mathrm{max}_k n(k)$};
\item[(vi)] its {\it topological dimension $\mathrm{dim}(X)$} and {\it topological dimension function $d:X \to \mathbb{N} \cup
\{0\}$} given by $d(x) = \mathrm{dim}(X_k)$ when $x \in X_k$;
\item[(vii)] its {\it standard representation $\sigma_R:R \to \oplus_{k=0}^l \mathrm{M}_{n(k)}(\mathrm{C}(X_k))$} defined to be the obvious inclusion;
\item[(viii)] the {\it evaluation maps $\mathrm{ev}_x:R \to \mathrm{M}_{n(k)}$} for $x \in X_k$, defined to be the composition of evaluation
at $x$ on $\oplus_{k=0}^l \mathrm{M}_{n(k)}(\mathrm{C}(X_k))$ and $\sigma_R$.
\end{enumerate}

\begin{rems}\label{rshmisc} {\rm
  If $R$ is separable, then the $X_k$ can be taken to be
metrisable (\cite[Proposition 2.13]{P4}).  If $R$ has no irreducible representations of
dimension less than or equal to $N$, then we may assume that $n(k)
> N$.  It is clear from the construction of $R_{k+1}$ as a pullback of $R_k$ and $C_{k+1}$ that there is a
canonical surjective $*$-homomorphism $\lambda_k:R_{k+1} \to R_k$.  By composing several such, one has also a canonical surjective 
$*$-homomorphism from $R_j$ to $R_k$ for any $j >k$.  Abusing notation slightly, we denote these maps by $\lambda_k$ as well.}
\end{rems}

\begin{rems}\label{matrixrsh} {\rm
The C$^*$-algebra $\mathrm{M_m}(R) \cong R \otimes \mathrm{M}_m(\mathbb{C})$ is an RSH algebra in a canonical way:  $C_k$ and $C_k^{(0)}$ are
replaced with $C_k \otimes \mathrm{M}_m(\mathbb{C})$ and $C_k^{(0)} \otimes \mathrm{M}_m(\mathbb{C})$, respectively, and the
clutching maps $\phi_k:R_k \to C_{k+1}^{(0)}$ are replaced with the amplifications 
\[
\phi_k \otimes \mathbf{id}_m:C_k \otimes \mathrm{M}_m(\mathbb{C}) \to C_{k+1}^{(0)} \otimes \mathrm{M}_m(\mathbb{C}).
\]
From here on we assume that $\mathrm{M}_m(R)$ is equipped with this canonical decomposition whenever $R$ is given with
a decomposition.  We will abuse notation
by using $\phi_k$ to denote both the original clutching map in the given decomposition for $R$ and its amplified versions.}
\end{rems}

\subsection{A comparison theorem}

\begin{lms}\label{invertextend}
Let $X$ be a compact metrisable Hausdorff space, and $Y$ a closed subset of $X$.  If $a \in \mathrm{M}_{n}(\mathrm{C}(Y))$ is
positive, then $a$ can be extended to $\tilde{a} \in \mathrm{M}_n(\mathrm{C}(X))$
with the property that $\tilde{a}(x)$ is invertible for every $x \in X \backslash Y$.  If $u = v \oplus v^*$ for a
unitary $v \in \mathrm{M}_n(\mathrm{C}(Y))$, then $u$ can be extended to a unitary $\tilde{u} \in \mathrm{M}_{2n}(\mathrm{C}(X))$.
\end{lms}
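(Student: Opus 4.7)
The lemma asserts two independent extension facts, so I would prove them separately.

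For the first statement, the plan is to extend $a$ to a positive element and then add a small bump that vanishes on $Y$ to force invertibility off $Y$. Concretely, apply the matrix-valued Tietze extension theorem to the entries of $a^{1/2}$ (symmetrizing if necessary to preserve self-adjointness) to obtain a self-adjoint $h \in \mathrm{M}_n(\mathrm{C}(X))$ with $h|_Y = a^{1/2}$; then $h^2$ is a positive extension of $a$. Since $X$ is metrizable, choose a continuous $f \colon X \to [0,1]$ with $f|_Y = 0$ and $f(x) > 0$ for $x \in X \setminus Y$ (e.g.\ $f(x) = \min\{1, \mathrm{dist}(x,Y)\}$). Set $\tilde{a} := h^2 + f \cdot \mathbf{1}_n$: this is positive, restricts to $a$ on $Y$, and for each $x \notin Y$ satisfies $\tilde{a}(x) \geq f(x)\mathbf{1}_n > 0$, hence is invertible.

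For the second statement, the plan is to show $u = v \oplus v^*$ lies in the connected component $U_0(\mathrm{M}_{2n}(\mathrm{C}(Y)))$ of the identity, and then lift it through the surjective restriction $\mathrm{M}_{2n}(\mathrm{C}(X)) \to \mathrm{M}_{2n}(\mathrm{C}(Y))$. The classical rotation path
\[
P(t) = \mathrm{diag}(v, \mathbf{1}_n) \, R(t) \, \mathrm{diag}(\mathbf{1}_n, v^*) \, R(-t), \qquad t \in [0, \pi/2],
\]
where $R(t) = \bigl(\begin{smallmatrix} \cos(t)\mathbf{1}_n & \sin(t)\mathbf{1}_n \\ -\sin(t)\mathbf{1}_n & \cos(t)\mathbf{1}_n \end{smallmatrix}\bigr)$, does the job: $P(0) = \mathrm{diag}(v, v^*) = u$, and a direct computation using $R(\pm \pi/2)$ as the block swap gives $R(\pi/2) \, \mathrm{diag}(\mathbf{1}_n, v^*) \, R(-\pi/2) = \mathrm{diag}(v^*, \mathbf{1}_n)$, whence $P(\pi/2) = \mathrm{diag}(vv^*, \mathbf{1}_n) = \mathbf{1}_{2n}$.

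To lift $u$ to a unitary on $X$, I would invoke the standard fact that $U_0$ lifts through surjective unital $*$-homomorphisms. Explicitly, partition $[0, \pi/2]$ into $0 = t_0 < t_1 < \cdots < t_N = \pi/2$ finely enough that each $P(t_{j+1}) P(t_j)^*$ is within distance $2$ of $\mathbf{1}_{2n}$; each such ratio is then $\exp(i h_j)$ for a uniquely determined self-adjoint $h_j \in \mathrm{M}_{2n}(\mathrm{C}(Y))$ with $\|h_j\| < \pi$ (via holomorphic functional calculus avoiding the cut at $-1$), giving $u = \exp(ih_{N-1}) \cdots \exp(ih_0)$. Entry-wise Tietze provides self-adjoint lifts $\tilde{h}_j \in \mathrm{M}_{2n}(\mathrm{C}(X))$, and $\tilde{u} := \exp(i\tilde{h}_{N-1}) \cdots \exp(i\tilde{h}_0)$ is the desired unitary extension.

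Neither part poses a serious obstacle. The only conceptual input is the standard observation that $v \oplus v^*$ is always homotopic to $\mathbf{1}_{2n}$ via a rotation path; once this is in hand, the construction reduces to the routine self-adjoint lifting argument for unital C$^*$-quotients.
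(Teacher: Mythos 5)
Your proposal is correct; both halves go through, and the key conceptual input for the unitary part — the Whitehead rotation $v \oplus v^*\sim_h \mathbf{1}_{2n}$ — is the same ingredient the paper uses. Your route, however, is organized differently. For the positive element, the paper first uses semiprojectivity of $\mathrm{C}^*(a)$ to extend $a$ to the closure of an open neighbourhood $V$ of $Y$, and then interpolates between $a$ and the constant $\|a\|\cdot\mathbf{1}_n$ via a function $f$ vanishing exactly on $Y$; you instead extend $a^{1/2}$ directly to all of $X$ by entrywise Tietze (symmetrizing), square, and add $f\cdot\mathbf{1}_n$. Your version sidesteps the neighbourhood extension altogether and is a bit more elementary — no semiprojectivity is needed once one notices that squaring a self-adjoint Tietze extension of $a^{1/2}$ globalizes positivity for free. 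For the unitary, the paper again extends $v\oplus v^*$ to a neighbourhood $\overline V$ via semiprojectivity and then uses the bump function $f$ to damp a Whitehead homotopy on $\overline{V\setminus Y}$ so that the result is $u$ on $Y$ and $\mathbf{1}_{2n}$ off $V$; you instead observe that the rotation path puts $u$ in $U_0(\mathrm{M}_{2n}(\mathrm{C}(Y)))$ and invoke the standard exponential-factorization argument that $U_0$ lifts through surjective unital $*$-homomorphisms (here, restriction $\mathrm{C}(X)\to\mathrm{C}(Y)$). These are interchangeable routes through the same underlying fact; the paper's bump-function device has the minor advantage of producing an extension equal to $\mathbf{1}$ outside $V$ (though the lemma as stated does not require that), while your $U_0$-lifting is the cleaner formulation when only an extension is needed.
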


\begin{proof}
By the semiprojectivity of the C$^*$-algebras they generate, both $a$ and $u$ can be extended to the closure of an open neighbourhood
$V$ of $Y$.  We will also denote these extensions by $a$ and $u$.  Fix a continuous map $f:X \to [0,1]$ which is equal to zero on $Y$,
equal to one on $X \backslash V$, and nonzero at every $x \in X \backslash Y$.  

Define
\[
\tilde{a}(x) = \left\{ \begin{array}{ll} a(x) + f(x)(||a||-a(x)), & x \in \overline{V} \\ ||a||, & x \in X \backslash \overline{V} 
\end{array} \right. 
\]
Clearly, $\tilde{a}$ belongs to $\mathrm{M}_n(\mathrm{C}(X))$ and extends $a$.
It follows that for each $x \in X \backslash Y$, either $\tilde{a}(x) = ||a|| \in \mathrm{GL}_n(\mathbb{C})$, or
\[
\tilde{a}(x) = a(x) + f(x)(||a||-a(x)) = f(x)||a|| + (1-f(x))a(x) \geq f(x)||a|| > 0.
\]
In the latter case we conclude that the rank of $\tilde{a}(x)$ is $n$, whence $\tilde{a}(x) \in \mathrm{GL}_n(\mathbb{C})$
as desired.

Now let us turn to $u$.  We have 
\[
u|_{\overline{V \backslash Y}} = v|_{\overline{V \backslash Y}} \oplus v^*|_{\overline{V \backslash Y}} \sim_h
\mathbf{1} \in \mathrm{M}_{2n}\left( \mathrm{C} \left( \overline{V \backslash Y} \right) \right)
\]
by the Whitehead Lemma, where $\sim_h$ denotes homotopy within the unitary group.  Let $H(x,t): 
\overline{V \backslash Y} \times [0,1] \to \mathcal{U}(\mathrm{M}_{2n}(\mathbb{C}))$ be an implementing 
homotopy, with $H(x,0) = u|_{\overline{V \backslash Y}}$ and $H(x,1) = \mathbf{1}$.  Define
\[
\tilde{u}(x) = \left\{ \begin{array}{ll} u(x), & x \in Y \\ H(x,f(x)), & x \in V \backslash Y \\ \mathbf{1}, & x \in X \backslash V \end{array}
\right.
\]
It is straightforward to check that $\tilde{u}$ is a unitary in $\mathrm{M}_{2n}(\mathrm{C}(X))$, and $\tilde{u}$ extends $u$ by
definition.   
\end{proof}

\begin{lms}\label{mainrsh}
Let $A$ be a separable RSH algebra with a fixed decomposition as above.  Let $a, b \in A$ be positive, and suppose that
$||\lambda_k(b-a)|| < \epsilon$ inside the $k^{\mathrm{th}}$ stage algebra $A_k$, $k < l$.  Suppose further that
\[
\mathrm{rank}(a(x)) + (d(x)-1)/2 \leq \mathrm{rank}(b(x)), \forall x \in X_j \backslash X_j^{(0)}, j > k.
\]
It follows that there are $m \in \mathbb{N}$ and $v \in \mathrm{M}_m(A)$ such that, upon considering $A$ as the
upper-left $1 \times 1$ corner of $\mathrm{M}_m(A)$ we have $||\lambda_{k+1}(vbv^*-a)|| < N\sqrt{\epsilon}$ for the
constant $N$ of Lemma \ref{main} and 
\[
\mathrm{rank}(a(x)) + (d(x)-1)/2 \leq \mathrm{rank}((vbv^*)(x)), \forall x \in X_j \backslash X_j^{(0)}, j > k+1.
\]
\end{lms}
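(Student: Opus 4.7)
The plan is to apply Lemma \ref{main} at the single base space $X_{k+1}$, and then to lift the resulting elements to $\mathrm{M}_m(A)$ through the remaining pullback stages using Lemma \ref{invertextend} to carry along the rank condition.

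First, I would restrict $a$ and $b$ to $X_{k+1}$. The hypothesis $\|\lambda_k(b - a)\| < \epsilon$, combined with the fact that the values on $X_{k+1}^{(0)}$ are determined by $\lambda_k(a), \lambda_k(b)$ via the clutching $\phi_k$, gives the norm hypothesis of Lemma \ref{main} on $Y := X_{k+1}^{(0)}$; the rank hypothesis at $j = k+1$ supplies the rank hypothesis off $Y$. Applying Lemma \ref{main} produces positive elements $c_0, d_0$ and a unitary $u_0$ in $\mathrm{M}_{4n(k+1)}(\mathrm{C}(X_{k+1}))$, all equal to $\mathbf{1}$ on $Y$, with $\|(d_0 u_0 c_0) b (d_0 u_0 c_0)^* - a\| < N\sqrt{\epsilon}$. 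Because each of $c_0, d_0, u_0$ agrees with $\mathbf{1}$ on $Y$, and $\phi_k \otimes \mathbf{id}_4$ sends $\mathbf{1}_{\mathrm{M}_4(A_k)}$ to $\mathbf{1}$, the pullback description of $\mathrm{M}_4(A_{k+1})$ produces lifts $c^{(k+1)}, d^{(k+1)}, u^{(k+1)} \in \mathrm{M}_4(A_{k+1})$ which agree with $c_0, d_0, u_0$ at $X_{k+1}$ and with $\mathbf{1}$ on stages $\leq k$.

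The second step is to lift these three elements to $\mathrm{M}_m(A)$ where $m := 4 \cdot 2^{l-k-1}$. The positive elements $c^{(k+1)}$ and $d^{(k+1)}$ I would embed into $\mathrm{M}_m(A_{k+1})$ by direct sum with $\mathbf{1}_{m-4}$, and then apply the positive case of Lemma \ref{invertextend} iteratively through stages $k+2, \ldots, l$, producing positive lifts $\tilde{c}, \tilde{d} \in \mathrm{M}_m(A)$ whose values are invertible on every $X_j \setminus X_j^{(0)}$ with $j > k+1$. For $u^{(k+1)}$ I would use the doubling trick: at each stage transition, replace the current unitary $w$ by $w \oplus w^*$ (which doubles the matrix size) so that the unitary case of Lemma \ref{invertextend} applies; after $l-k-1$ successive doublings and extensions one obtains a unitary $\tilde{u} \in \mathrm{M}_m(A)$. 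Define $v := \tilde{d} \tilde{u} \tilde{c} \in \mathrm{M}_m(A)$.

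The final step is verification. Since all three lifts equal $\mathbf{1}$ on stages $\leq k$ and since extensions preserve lower-stage values, $v(x) = \mathbf{1}_m$ for $x \in X_j$ with $j \leq k$, yielding the estimate $\|b - a\| < \epsilon < N\sqrt{\epsilon}$ there. At $x \in X_{k+1}$, $\tilde{u}(x)$ is block diagonal with first block (of size $4n(k+1)$) equal to $u_0(x)$, while $\tilde{c}(x), \tilde{d}(x)$ equal $c_0(x) \oplus \mathbf{1}_{(m-4)n(k+1)}$, $d_0(x) \oplus \mathbf{1}_{(m-4)n(k+1)}$ respectively; a direct block computation shows that the first block column of $v(x)$ coincides in its upper $4 n(k+1)$ rows with that of $d_0 u_0 c_0(x)$ and vanishes below, so that $(vbv^*)(x)$ agrees with $(d_0 u_0 c_0) b (d_0 u_0 c_0)^*(x)$ in the relevant corner and the $N\sqrt{\epsilon}$ bound from Lemma \ref{main} transfers. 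At $x \in X_j \setminus X_j^{(0)}$ with $j > k+1$, $v(x)$ is a product of invertible matrices, so $\mathrm{rank}((vbv^*)(x)) = \mathrm{rank}(b(x)) \geq \mathrm{rank}(a(x)) + (d(x)-1)/2$ by hypothesis. I expect the chief obstacle to be the bookkeeping around the doubling scheme for the unitary extension and the compatible padding of the positives, particularly verifying that the first-block-column structure at $X_{k+1}$ transports the Lemma \ref{main} estimate unchanged.
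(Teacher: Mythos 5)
Your proposal is correct and follows the paper's proof essentially verbatim: apply Lemma \ref{main} on $X_{k+1}$ with $Y = X_{k+1}^{(0)}$, lift the resulting $c_0,d_0,u_0$ through the pullback to $\mathrm{M}_4(A_{k+1})$ by forcing $\mathbf{1}$ on the lower stages, then iterate Lemma \ref{invertextend} (doubling the unitary to $w \oplus w^*$ at each step and padding the positives) to reach $\mathrm{M}_m(A)$ with $v$ invertible at every point of $X_j \setminus X_j^{(0)}$ for $j>k+1$. The only cosmetic divergence is that you pad the positives with $\mathbf{1}$ whereas the paper pads with $0$; both are compatible with Lemma \ref{invertextend} and give the same computation at $X_{k+1}$, since $b$ sits in the upper-left $n(k+1)\times n(k+1)$ corner and the complementary padding block is orthogonal to it there.
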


\begin{proof}
Let $\phi_k:A_k \to C_{k+1}^{(0)}$ be the $k^{\mathrm{th}}$ clutching map.  Our hypotheses imply that $\phi_k(b), \phi_k(a) \in
C_{k+1}^{(0)} = \mathrm{M}_{n(k+1)}(\mathrm{C}(X_{k+1}^{(0)})$ satisfy $||\phi_k(b) - \phi_k(a)|| < \epsilon$.  Apply Lemma
\ref{main} with $\phi_k(a), \phi_k(b), X_{k+1}, X_{k+1}^{(0)}$, and $\epsilon$ in place of $a, b, X, Y$, and $\epsilon$, respectively.
The conclusion of Lemma \ref{main} provides us with positive elements $c, d$ and a unitary element $u$ in $\mathrm{M}_{4n(k+1)}(\mathrm{C}(X_{k+1}))$
such that 
\begin{enumerate}
\item[(i)] $||(cud)\phi_k(b)(cud)^* - \phi_k(a)|| < N \sqrt{\epsilon}$, and
\item[(ii)] $c(x) = d(x) = u(x) = \mathbf{1} \in \mathrm{M}_{4n(k+1)}(\mathbb{C})$ for every $x \in X_{k+1}^{(0)}$.
\end{enumerate}
Using (ii) we extend $c,d$, and $u$ to $\mathrm{M}_4(A_{k+1})$ (keeping the same notation) by setting
\[
\lambda_k(c) = \lambda_k(d) = \lambda_k(u) = \mathbf{1} \in \mathrm{M}_4(A_k).
\]
Set $v_{k+1} = cud \in \mathrm{M}_4(A_{k+1})$.  We claim that
\[
||v_{k+1} \lambda_{k+1}(b) v_{k+1}^* - \lambda_{k+1}(a)|| < N \sqrt{\epsilon}.
\] 
It will suffice to prove that the image of $v_{k+1} \lambda_{k+1}(b) v_{k+1}^* - \lambda_{k+1}(a)$
under the standard representation 
\[
\sigma_{\mathrm{M}_4(A_{k+1})}:\mathrm{M}_4(A_{k+1}) \to \bigoplus_{j=0}^{k+1} \mathrm{M}_{4n(j)}(\mathrm{C}(X_j))
\]
is of norm at most $N \sqrt{\epsilon}$.  This in turn need only be checked in each of the direct summands of the 
codomain.  In the summand $\oplus_{j=0}^k \mathrm{M}_{4n(j)}(\mathrm{C}(X_j))$ the desired estimate follows from two
facts:  $\sigma_{\mathrm{M}_4(A_{k+1})}(v_{k+1})$ is equal to the unit of the said summand (see (ii) above), and the images of $a$ and $b$
in this summand are at distance strictly less than $\epsilon < N\sqrt{\epsilon}$.  In the summand $\mathrm{M}_{4n(k+1)}(\mathrm{C}(X_{k+1}))$
the desired estimate follows from (i) above.

If $m \geq 4$, then any $v \in \mathrm{M}_m(A)$ which, upon viewing $\mathrm{M}_4(A)$
as the upper-left $4 \times 4$ corner of $\mathrm{M}_m(A)$, has the property that $\lambda_{k+1}(v)=v_{k+1}$ will
at least satisfy $||\lambda_{k+1}(vbv^*-a)|| < N \sqrt{\epsilon}$.  It remains, then, to find such 
a $v$, while ensuring that 
\[
\mathrm{rank}(a(x)) + (d(x)-1)/2 \leq \mathrm{rank}((vbv^*)(x)), \forall x \in X_j \backslash X_j^{(0)}, j > k+1.
\]

If $k+1=l$, then there is nothing to prove.  Suppose that $k+1 < l$.  
Let us first construct an element $v_{k+2}$ of $\mathrm{M}_{8}(A_{k+2})$ with the following properties:
$\lambda_{k+1}(v_{k+2}) = v_{k+1}$, and 
\[
\mathrm{rank}(a(x)) + (d(x)-1)/2 \leq \mathrm{rank}((v_{k+2}bv_{k+2}^*)(x)), \forall x \in X_{k+2} \backslash X_{k+2}^{(0)}.
\]

Define $c_{k+1} = c \oplus 0$, $d_{k+1} = d \oplus 0$, and $u_{k+1} = u \oplus u^*$.  Use Lemma \ref{invertextend} to extend
$\phi_{k+1}(c_{k+1})$, $\phi_{k+1}(d_{k+1})$, and $\phi_{k+1}(u_{k+1})$ to positive elements $\tilde{c}_{k+2}, \tilde{d}_{k+2}$
and a unitary element $\tilde{u}_{k+2}$, respectively, in $\mathrm{M}_{8n(k+2)}(\mathrm{C}(X_{k+2}))$, all of which are invertible
at every $x \in X_{k+2} \backslash X_{k+2}^{(0)}$.  Consider $\mathrm{M}_8(A_{k+2})$ as a subalgebra of $\oplus_{j=0}^{k+2}
\mathrm{M}_{8n(j)}(\mathrm{C}(X_{j}))$ via its standard representation, and define $c_{k+2}$ to be equal to $c_{k+1}$ in the first
$k+1$ summands, and equal to $\tilde{c}_{k+2}$ in the last summand;  define $d_{k+2}$ and $u_{k+2}$ similarly.  Setting $v_{k+2} =
c_{k+2} u_{k+2} d_{k+2}$ we have that 
\begin{eqnarray*}
\lambda_{k+1}(v_{k+2}) & = & \lambda_{k+1}(c_{k+2} u_{k+2} d_{k+2}) \\
& =  & c_{k+1} u_{k+1} d_{k+1} \\
& = & (c \oplus 0)(u \oplus u^*)(d \oplus 0) \\
& = & cud \oplus 0 \\
& = & v_{k+1}.
\end{eqnarray*}
Moreover, for each $x \in X_{k+2} \backslash X_{k+2}^{(0)}$, we have
\[
v_{k+2}(x) = \tilde{c}_{k+2}(x)\tilde{u}_{k+2}(x)\tilde{d}_{k+2}(x) \in \mathrm{GL}_{8n(k+2)}(\mathbb{C}).
\]
It follows that 
\[
\mathrm{rank}((vbv^*)(x)) = \mathrm{rank}(b(x)) \geq (d(x)-1)/2 + \mathrm{rank}(a(x)),  \ \forall x \in X_{k+2} \backslash X_{k+2}^{(0)},
\]
as required.

If $k+2 = l$ then we set $v = v_{k+2}$ to complete the proof.  Otherwise, we repeat the arguments in the  paragraph above using $c_{k+2}, 
d_{k+2},$ and $u_{k+2}$ in place of $c, d,$ and $u$, respectively, to obtain $v_{k+3} \in \mathrm{M}_{8^2}(A_{k+3})$ such that 
$\lambda_{k+2}(v_{k+3})=v_{k+2}$ and
\[
\mathrm{rank}(a(x)) + (d(x)-1)/2 \leq \mathrm{rank}((v_{k+3}bv_{k+3}^*)(x)), \forall x \in X_{k+3} \backslash X_{k+3}^{(0)}.
\]
Continuing this process until we arrive at $v_{k + (l-k)} = v_l$ and setting $v = v_l$ yields the Lemma in full. 
\end{proof}

\begin{thms}\label{rshcomp}
Let $A$ be a separable RSH algebra with a fixed decomposition as above.  Let $a, b \in A$ be positive, and suppose that 
\[
\mathrm{rank}(a(x)) + (d(x)-1)/2 \leq \mathrm{rank}(b(x)), \ \forall x \in X_k \backslash X_k^{(0)}, \ k \in \{0,1,\ldots,l\}.
\]
It follows that $a \precsim b$.
\end{thms}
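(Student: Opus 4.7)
The plan is to show that for every $\delta > 0$ there is an element $v \in \mathrm{M}_m(A)$ (some $m$) with $\|vbv^* - a\| < \delta$; letting $\delta$ run through a null sequence then gives $a \precsim b$ directly from the definition of Cuntz subequivalence.

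To produce such a $v$ I iterate Lemma \ref{mainrsh} up the tower $A_0, A_1, \ldots, A_l = A$.  Each application absorbs one more stage: at stage $k$ it takes the running second argument $b_k$, satisfying $\|\lambda_k(b_k - a)\| < \epsilon_k$, produces an element $v_{k+1} \in \mathrm{M}_{m_{k+1}}(A)$, and delivers the improved approximation $\|\lambda_{k+1}(v_{k+1} b_k v_{k+1}^* - a)\| < N\sqrt{\epsilon_k}$, while preserving the rank inequality at all unvisited stages $j > k+1$.  Setting $b_{k+1} = v_{k+1} b_k v_{k+1}^*$ and $\epsilon_{k+1} = N\sqrt{\epsilon_k}$ carries the induction forward.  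After $l$ iterations $\lambda_l$ is the identity on $A$, and the accumulated element $W = v_l \cdots v_1 w_0$ (where $w_0$ denotes the base-case element defined below) satisfies $\|WbW^* - a\| < \epsilon_l$; the recursion $\epsilon_{k+1} = N\sqrt{\epsilon_k}$ can be driven below any prescribed $\delta$ in finitely many steps by choosing $\epsilon_0$ small enough.

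What remains is the base case, since Lemma \ref{mainrsh} requires an initial stage-$0$ closeness that is not among the hypotheses.  I supply it by applying Lemma \ref{main} directly to $\lambda_0(a), \lambda_0(b) \in \mathrm{M}_{n(0)}(\mathrm{C}(X_0))$ with $Y = \emptyset$: the rank hypothesis of the theorem covers all of $X_0$ since $X_0^{(0)} = \emptyset$, the tolerance is at our disposal, and the conclusion yields positive $c_0, d_0$ and a unitary $u_0$ in $\mathrm{M}_{4n(0)}(\mathrm{C}(X_0))$ with $(c_0 u_0 d_0) \lambda_0(b) (c_0 u_0 d_0)^*$ within a small prescribed amount of $\lambda_0(a)$ in norm.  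I then lift $c_0, d_0, u_0$ through the successive surjections $\lambda_k \colon A_{k+1} \to A_k$ by applying Lemma \ref{invertextend} at each stage (after a mild further amplification to put the unitary in the form $v \oplus v^*$ required by that lemma), arranging every lift to be invertible at each point not lying over $X_0$.  Setting $w_0$ equal to the product of these lifts, we have $\|\lambda_0(w_0 b w_0^* - a)\|$ small as desired, and invertibility of $w_0$ off $X_0$ ensures that $(w_0 b w_0^*)(x)$ has the same rank as $b(x)$ there, so the rank hypothesis of the theorem persists for $w_0 b w_0^*$ in place of $b$; this is exactly the input Lemma \ref{mainrsh} needs at $k = 0$.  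The main obstacles are purely technical: coordinating the three liftings with invertibility (handled by Lemma \ref{invertextend}) and controlling the tolerance recursion across the fixed length $l$ of the decomposition.
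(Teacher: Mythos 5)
Your proposal is correct and follows essentially the same route as the paper's own proof: iterate Lemma~\ref{mainrsh} up the tower with the tolerance recursion $\epsilon_{k+1}=N\sqrt{\epsilon_k}$, then handle the base case by invoking Lemma~\ref{main} on $X_0$ with $Y=\emptyset$ and lifting the resulting $c_0,d_0,u_0$ through the successive stages via Lemma~\ref{invertextend} (with the $v\oplus v^*$ amplification), keeping the lifts invertible off the closed subspaces $X_j^{(0)}$ so that the rank hypothesis is preserved at the remaining stages. The only cosmetic difference is that the paper names the final element $v=v_l\cdots v_0$ while you name it $W=v_l\cdots v_1 w_0$, and the paper explicitly computes $\delta_k=\delta_0^{1/2^k}\prod_{j<k}N^{1/2^j}$ to justify that $\delta_0$ can be chosen to make $\delta_l<\epsilon$; your phrase ``choosing $\epsilon_0$ small enough'' covers this.
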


\begin{proof}  We view $A$ as the upper-left $1 \times 1$ corner of $\mathrm{M}_m(A)$, and adopt the standard notation for the
decompositions of $A$ and $\mathrm{M}_m(A)$.  Let $\epsilon > 0$ be given;  we must find $m \in \mathbb{N}$ and $v \in \mathrm{M}_m(A)$ 
such that $||vbv^*-a||< \epsilon$.

Let $l$ be the length of the fixed decomposition for $A$.  Given $\delta_0 > 0$, we define $\delta_k = N\sqrt{\delta_{k-1}}$
for each $k \in \{1,\ldots,l\}$, where $N$ is the constant of Lemma \ref{main}.  It follows that 
\[
\delta_k = \delta_0^{1/2^k} \prod_{j=0}^{k-1} N^{1/2^j}.
\] 
Assume that $\delta_0$ has been chosen so that $\delta_l < \epsilon$. 

Apply Lemma \ref{main} with $\lambda_0(a), \lambda_0(b), X_0,$ and $\emptyset,$ in place of $a, b, X,$ and $Y$.  Since $Y$ is 
empty, we can arrange to have any value of $\epsilon$ appear in the conclusion of Lemma \ref{main}.  We choose 
$\epsilon = \delta_0^2/N^2$, so that the norm estimate in the conclusion of Lemma \ref{main} is strictly less than $N \sqrt{\delta_0^2/N^2}
= \delta_0$.  Let $c_0, d_0,$ and $u_0$ denote the positive elements and the unitary element, respectively, of 
$\mathrm{M}_{4n(0)}(\mathrm{C}(X_0))$ produced by Lemma \ref{main}.  Apply the arguments of the second-to-last paragraph in the proof
of Lemma \ref{mainrsh} with $c_0, d_0,$ and $u_0$ in place of $c, d,$ and $u$, respectively, to produce an element $v_0$ of
$\mathrm{M}_{32}(A)$ such that $||\lambda_0(v_0 b v_0^* - a)|| < \delta_0$, and 
\[
\mathrm{rank}(a(x)) + (d(x)-1)/2 \leq \mathrm{rank}((v_0bv_0^*)(x)), \forall x \in X_j \backslash X_j^{(0)}, j > 0.
\]

Suppose that we have found $m_k \in \mathbb{N}$ and $v_k \in \mathrm{M}_{m_k}(A)$ such that
$||\lambda_k(v_k b v_k^* - a)|| < \delta_k$ and 
\[
\mathrm{rank}(a(x)) + (d(x)-1)/2 \leq \mathrm{rank}((v_kbv_k^*)(x)), \forall x \in X_j \backslash X_j^{(0)}, j > k.
\]
An application of Lemma \ref{mainrsh} yields $v_{k+1} \in \mathrm{M}_{8m_k}(A)$ such that 
$||\lambda_{k+1}(v_{k+1}v_k b v_k^* v_{k+1}^* -a)|| < N \sqrt{\delta_k} = \delta_{k+1}$ and
\[
\mathrm{rank}(a(x)) + (d(x)-1)/2 \leq \mathrm{rank}((v_{k+1}v_k b v_k^* v_{k+1}^*)(x)), \ \forall x \in X_j \backslash X_j^{(0)}, j > k+1.
\]
Starting with  $v_0$, we use the fact above to find, successively, $v_1,\ldots,v_l$.  With $v = v_l v_{l-1} \cdots v_0$ we have
\[
||vbv^*-a|| < \delta_l < \epsilon,
\]
as desired.
\end{proof} 

\section{Applications}\label{apps}

\subsection{The radius of comparison and strict comparison}\label{rcsec}

Let $A$ be a unital stably finite C$^*$-algebra, and let $a,b \in \mathrm{M}_{\infty}(A)$ be positive.  
We say that $A$ has {\it $r$-strict comparison} if $a \precsim b$ whenever
\[
d(a) + r < d(b), \ \forall d \in \mathrm{LDF}(A).
\]
The {\it radius of comparison of $A$}, denoted by $\mathrm{rc}(A)$, is defined to be the
infimum of the set 
\[
\{r \in \mathbb{R}^+ \ | \ A \ \mathrm{has} \ \mathrm{r-strict \ comparison} \}
\]
whenever this set is nonempty;  if the set is empty then we set $\mathrm{rc}(A) = \infty$ (\cite{To3}).
The condition $\mathrm{rc}(A) = 0$ is equivalent to $A$ having strict comparison (see Subsection \ref{dimfunc}).

The radius of comparison should be thought of as the ratio of the topological dimension of $A$ to its
matricial size, despite the fact that both may be infinite.  It has been useful in distinguishing
C$^*$-algebras which are not $\mathrm{K}$-theoretically rigid in the sense of G. A. Elliott (\cite{kg}, 
\cite{To4}).    Here we give sharp upper bounds
on the radius of comparison of a recursive subhomogeneous algebra.  These improve significantly upon the upper bounds 
established in the homogeneous case by \cite[Theorem 3.15]{To5}.

\begin{thms}\label{rshrc}
Let $A$ be a separable RSH algebra with a fixed decomposition of length $l$ and matrix sizes $n(0),\ldots,n(l)$.  
It follows that 
\[
\mathrm{rc}(A) \leq \underset{0\leq k\leq l}{\max} \frac{\mathrm{dim}(X_k)-1}{2n(k)}.
\]
\end{thms}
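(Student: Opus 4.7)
The plan is to show that $A$ has $r$-strict comparison for $r = \max_{0\le k \le l}(\dim(X_k)-1)/(2n(k))$, whence $\mathrm{rc}(A) \le r$ by definition. This is accomplished by translating the dimension-function hypothesis into the pointwise rank hypothesis of Theorem \ref{rshcomp} via point-evaluation tracial states.

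First I would reduce to a matrix level. Suppose $a, b \in \mathrm{M}_\infty(A)_+$ satisfy $d(a) + r < d(b)$ for every $d \in \mathrm{LDF}(A)$. Pick $m$ so that $a, b \in \mathrm{M}_m(A)$. By Remark \ref{matrixrsh}, $\mathrm{M}_m(A)$ carries a canonical RSH decomposition with the \emph{same} base spaces $X_k$ and the same topological dimension function, but with matrix sizes $m \cdot n(k)$. The goal is to apply Theorem \ref{rshcomp} inside $\mathrm{M}_m(A)$.

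Next I would introduce the relevant lower semicontinuous dimension functions. For each $k \in \{0, \ldots, l\}$ and each $x \in X_k$, compose the evaluation map $\mathrm{ev}_x:A \to \mathrm{M}_{n(k)}$ with the normalised trace on $\mathrm{M}_{n(k)}$ to obtain a tracial state $\tau_x \in \mathrm{QT}(A)$. By the bijection of \cite[Theorem II.2.2]{BH}, the formula (\ref{ldf}) yields $d_{\tau_x} \in \mathrm{LDF}(A)$, and a direct functional-calculus computation gives
\[
d_{\tau_x}(c) \;=\; \frac{1}{n(k)}\,\mathrm{rank}(c(x))
\]
for any $c \in \mathrm{M}_\infty(A)_+$, where $c(x)$ denotes the image of $c$ under the appropriate amplification of $\mathrm{ev}_x$.

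Substituting $d = d_{\tau_x}$ into the hypothesis yields, for every $x \in X_k$,
\[
\frac{\mathrm{rank}(a(x))}{n(k)} + r \;<\; \frac{\mathrm{rank}(b(x))}{n(k)},
\]
that is $\mathrm{rank}(b(x)) - \mathrm{rank}(a(x)) > r\,n(k) \ge (\dim(X_k)-1)/2$. Since the left side is an integer and the right side is a half-integer, we obtain $\mathrm{rank}(a(x)) + (\dim(X_k)-1)/2 \le \mathrm{rank}(b(x))$ on all of $X_k$ (a fortiori on $X_k \setminus X_k^{(0)}$), for every $k$. Applying Theorem \ref{rshcomp} to $\mathrm{M}_m(A)$ (whose topological dimension function is $d(x)=\dim(X_k)$ for $x \in X_k$, unchanged from $A$) produces $a \precsim b$, as required.

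The proof is essentially bookkeeping once Theorem \ref{rshcomp} is in hand; no genuine obstacle remains. The one point that requires care is the scaling under matrix amplification: in $\mathrm{M}_m(A)$ the ambient matrix size at $x \in X_k$ becomes $m\,n(k)$ and the ranks scale accordingly, but the \emph{normalised} dimension function $d_{\tau_x}$ still delivers precisely the inequality $\mathrm{rank}(b(x)) - \mathrm{rank}(a(x)) > r\,n(k)$ needed to feed Theorem \ref{rshcomp}, because the topological dimension $\dim(X_k)$ is unaffected by amplification.
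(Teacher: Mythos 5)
Your proof is correct and follows essentially the same route as the paper's: evaluate the dimension-function hypothesis at the point-evaluation tracial states $\tau_x$, translate to a pointwise rank inequality, and feed it to Theorem \ref{rshcomp}. The only cosmetic difference is that you are a bit more explicit about passing to $\mathrm{M}_m(A)$ via Remark \ref{matrixrsh} and you add a harmless (and unnecessary, since $>$ already implies $\geq$) integer-versus-half-integer observation.
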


\begin{proof}
Use $r$ to denote the upper bound in the statement of the theorem, and suppose that we are given $a,b \in \mathrm{M}_{\infty}(A)_+$
such that $d_\tau(a) + r < d_\tau(b)$ for every $\tau \in \mathrm{T}(A)$.  Associated to each $x \in X_k \backslash X_k^{(0)}$, $0 
\leq k \leq l$, is an extreme point of $\mathrm{T}(A)$, denoted by $\tau_x$, obtained by composing $ev_x$ with the normalised
trace on $\mathrm{M}_{n(k)}$.  For any $a \in \mathrm{M}_{\infty}(A)_+$ we have $d_{\tau_x}(a) = [\mathrm{rank}(ev_x(a))]/n(k)$,
and so 
\[
\frac{\mathrm{rank}(ev_x(a))}{n(k)} + \frac{\mathrm{dim}(X_k)-1}{2n(k)} \leq  \frac{\mathrm{rank}(ev_x(a))}{n(k)} +r <
\frac{\mathrm{rank}(ev_x(b))}{n(k)}.
\]
Multiplying through by $n(k)$ we have
\[
\mathrm{rank}(a(x)) + \frac{\mathrm{dim}(X_k)-1}{2} < \mathrm{rank}(b(x))
\]
for every $x \in X_k \backslash X_k^{(0)}$ and $k \in \{0,\ldots,l\}$, whence $a \precsim b$ by Theorem \ref{rshcomp},
as desired.
\end{proof}

\noindent
Specialising to the homogeneous case we have the following corollary.

\begin{cors}\label{homsharp}
Let $X$ be a compact metrisable Hausdorff space of covering dimension $d \in \mathbb{N}$, and $p \in \mathrm{C}(X) \otimes \mathcal{K}$
a projection.  If follows that
\[
\mathrm{rc}(p(\mathrm{C}(X) \otimes \mathcal{K})p) \leq \frac{d-1}{2 \mathrm{rank}(p)}.
\]
\end{cors}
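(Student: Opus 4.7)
The plan is to exhibit $A := p(\mathrm{C}(X) \otimes \mathcal{K})p$ as a separable RSH algebra whose decomposition has all matrix sizes equal to $n := \mathrm{rank}(p)$ and all base spaces of covering dimension at most $d$, then apply Theorem \ref{rshrc} directly.

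First I would use that $p$ corresponds to a locally trivial rank-$n$ Hermitian vector bundle $E$ over $X$, and that $A$ is the algebra of continuous sections of $\mathrm{End}(E)$. Since $X$ is compact and metrisable, I can choose a finite open cover $U_0, \ldots, U_l$ of $X$ trivializing $E$, together with a closed refinement $X_0, \ldots, X_l$ with $X_k \subseteq U_k$. Over each $X_k$ the local trivialization gives $A|_{X_k} \cong \mathrm{M}_n(\mathrm{C}(X_k))$, and the change-of-frame data on overlaps supplies canonical clutching $*$-homomorphisms
\[
\phi_k : A_k \to \mathrm{M}_n\bigl(\mathrm{C}(X_{k+1} \cap (X_0 \cup \cdots \cup X_k))\bigr).
\]
Iterated pullback along the $\phi_k$s produces a separable RSH algebra $A_l$ in the sense of Definition \ref{rshdef}, and a standard partition-of-unity argument identifies $A_l$ with $A$.

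Next I observe that each $X_k$ is a closed subspace of the compact metric space $X$, so the subspace theorem for covering dimension of metric spaces gives $\dim(X_k) \leq \dim(X) = d$, while by construction all matrix sizes $n(k)$ equal $n$. Theorem \ref{rshrc} then yields
\[
\mathrm{rc}(A) \leq \max_{0 \leq k \leq l} \frac{\dim(X_k)-1}{2 n(k)} \leq \frac{d-1}{2n} = \frac{d-1}{2\,\mathrm{rank}(p)},
\]
as required.

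The only mildly technical point is verifying that the pullback assembly along the trivialization cocycle really reproduces $A$ on the nose rather than a Morita-equivalent cousin; this is a routine partition-of-unity computation, and it is the one place where the possible nontriviality of the bundle $E$ genuinely enters. Everything else is a direct specialisation of Theorem \ref{rshrc} to the case of a single stage of an RSH decomposition.
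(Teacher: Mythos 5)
Your proof is correct and follows essentially the same route as the paper's: exhibit $p(\mathrm{C}(X)\otimes\mathcal{K})p$ as a separable RSH algebra with constant matrix size $n = \mathrm{rank}(p)$ and base spaces of dimension at most $d$, then invoke Theorem~\ref{rshrc}. The only difference is presentational---the paper simply cites Section~2 of Phillips's paper \cite{P4} (where the fact that a finite-type vector bundle over a compact space yields such a decomposition is established), whereas you reconstruct that decomposition by hand via a finite closed cover trivializing the bundle and the associated clutching cocycle. Both arguments rest on the same two observations: closed subsets of a compact metric space have covering dimension at most that of the ambient space, and all local trivializations share the same fibre dimension $n$.
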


\begin{proof}
The algebra $p(\mathrm{C}(X) \otimes \mathcal{K})p$ admits a recursive subhomogeneous decomposition in
which every matrix size is equal to $\mathrm{rank}(p)$ and each $X_k$ has covering dimension at most $d$.
(This decomposition comes from the fact that $p$ corresponds to a vector bundle of finite type---see 
Section 2 of \cite{P4}.)  The Corollary now follows from Theorem \ref{rshrc}.
\end{proof}

\noindent
Corollary \ref{homsharp} improves upon \cite[Theorem 3.15]{To5}, or rather, the upper bound on the radius
of comparison that can be derived from it:  the latter result leads to an upper bound of $(9d)/\mathrm{rank}(p)$.

The property of strict comparison is a powerful regularity property with agreeable consequences.  We
will see some examples of this in Subsections \ref{cuntz}, \ref{bh}, and \ref{hilbert};  a fuller treatment of this
topic can be found in \cite{ET}.  

\begin{thms}\label{ashcomp}
Let $(A_i,\phi_i)$ be a unital direct sequence of recursive subhomogeneous algebras with slow dimension 
growth.  If $A = \lim_{i \to \infty}(A_i, \phi_i)$ is simple, then $A$ has strict comparison of positive elements.
\end{thms}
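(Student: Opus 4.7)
The plan is to reduce the statement to the rank-comparison theorem for RSH algebras (Theorem \ref{rshcomp}) applied at a sufficiently late stage of the inductive system, using slow dimension growth to absorb the error term $(d-1)/2$ and using simplicity of $A$ to transfer the trace gap from $A$ down to point-evaluation traces on $A_j$.

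Suppose $a,b \in A_+$ satisfy $d_\tau(a) < d_\tau(b)$ for every $\tau \in \mathrm{QT}(A)$. By Proposition \ref{basics}(ii) it suffices to prove $(a-\eta)_+ \precsim b$ for each $\eta>0$, so I would fix $\eta>0$. Using lower semicontinuity of $d_\tau$ in $\tau$, compactness of $\mathrm{QT}(A)$, and upper bounds of the form $d_\tau((a-\eta)_+) \leq \tau(f_{\eta/2}(a))$ via a continuous cutoff $f_{\eta/2}$ (sandwiching the dimension function by continuous functionals), I would first extract a uniform gap: there exist $\delta>0$ and $\gamma>0$ with
\[
d_\tau((a-\eta)_+) + \gamma \;\leq\; d_\tau((b-\delta)_+), \qquad \forall \tau \in \mathrm{QT}(A).
\]
Next I would approximate $(a-\eta)_+$ and $(b-\delta)_+$ by elements $\tilde{a}, \tilde{b}$ in some $A_i$, and consider their images $a_j, b_j$ in $A_j$ for $j \ge i$. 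By slow dimension growth I could choose $j$ with
\[
\max_{k} \frac{\mathrm{dim}(X_k^{(j)})-1}{2\,n^{(j)}(k)} < \gamma/2,
\]
so that Theorem \ref{rshrc} already yields $\mathrm{rc}(A_j) < \gamma/2$. To conclude $a_j \precsim b_j$ directly via Theorem \ref{rshcomp}, one must verify $\mathrm{rank}(a_j(x)) + (d(x)-1)/2 \leq \mathrm{rank}(b_j(x))$ for every $x \in X_k^{(j)} \setminus X_k^{(j)(0)}$; in terms of the normalised point-evaluation trace $\tau_x$, this is the inequality $d_{\tau_x}(a_j) + \gamma/2 \leq d_{\tau_x}(b_j)$.

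The main obstacle is precisely this last step: transferring the uniform gap at traces of the limit $A$ into the analogous gap at the finite-stage traces $\tau_x$ on $A_j$. This is where simplicity of $A$ is indispensable. The relevant fact (standard for unital simple RSH inductive limits with slow dimension growth, essentially a Blackadar-style strict comparability of ranks at the level of the finite stages) is that for each $x \in X_k^{(j)} \setminus X_k^{(j)(0)}$ the extreme trace $\tau_x$ is, up to a small error absorbable by shrinking $\eta$ and $\delta$ slightly, dominated by a trace of the form $\tau'\circ\phi_{j,\infty}$ with $\tau' \in \mathrm{QT}(A)$; without simplicity, $\tau_x$ could concentrate on a part of $A_j$ that gets killed further along the direct system. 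Granting this reduction, the uniform gap $\gamma$ produces the required rank gap at every $x$, Theorem \ref{rshcomp} gives $a_j \precsim b_j$ in $A_j$, and hence $\tilde{a}\precsim\tilde{b}$ in $A$. Letting $\eta \to 0$ and invoking Proposition \ref{basics}(ii) a final time yields $a\precsim b$, which is strict comparison.
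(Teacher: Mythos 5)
Your overall strategy---approximate at a finite stage, use slow dimension growth to shrink the radius-of-comparison bound of Theorem~\ref{rshrc}, and apply Theorem~\ref{rshcomp}---is morally the same as what the paper does, but the paper delegates the hard transfer step to \cite[Theorem 4.5]{To5}, patching only the injectivity hypothesis of \cite[Lemma 4.4]{To5} with the explicit Claim in the proof, whereas you attempt to carry out the transfer directly. The gap in your argument is precisely the step you flag as a ``relevant fact.'' As you have stated it, it is false: the extreme trace $\tau_x$ on $A_j$ is in general \emph{not} (even approximately) dominated by a trace pulled back from $A$. For a concrete illustration in the AH setting, take a simple unital AT algebra $A=\lim M_{n_i}(\mathrm{C}(\mathbb{T}))$ with a unique trace; the pulled-back traces on each $A_j$ form a single measure on $\mathbb{T}$, while $\tau_x$ is a point mass, and point masses are singular with respect to any non-atomic measure, so no domination of the form you propose holds regardless of how one shrinks $\eta$ and $\delta$. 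The correct mechanism is not a domination of the \emph{traces}: it is a statement about the normalised \emph{rank functions of images} $\phi_{i,j}(a)$, whose value at a point $x$ of $X_j$ is a normalised sum of ranks of $a$ over the eigenvalue maps of $\phi_{i,j}$ at $x$, and it is this averaging (together with simplicity, which forces these eigenvalue configurations to spread) that ultimately yields a uniform rank gap at every $x$. Conflating the two is why ``Granting this reduction\ldots'' cannot be granted; the reduction you ask for is essentially the entire content of \cite[Theorem 4.5]{To5}, which is why the paper simply invokes it rather than reproving it. If you want a self-contained proof along your lines, you will need to replace the trace-domination claim with a careful argument about eigenvalue distributions at finite stages, or else argue by contradiction through the Claim in the paper's proof (i.e., the approximate lifting of Cuntz subequivalence from $B$ to a finite stage), as \cite[Theorem 4.5]{To5} does.
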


\begin{proof}
Let us first show that $\liminf_{i \to \infty} \ \mathrm{rc}(A_i) = 0$.  We assume that each $A_i$ is equipped with a fixed decomposition.
Let $Y_i = \sqcup_{k=0}^{l_i} X_{i,k}$ denote the total space of $A_i$, $d_i:Y_i \to \{0\} \cup \mathbb{N}$ its 
topological dimension function, and $n_i(0),\ldots,n_i(l_i)$ its matrix sizes.  
From \cite[Definition 1.1]{P5}, $(A_i,\phi_i)$ has slow dimension
growth if the following statement holds:  for every $i \in \mathbb{N}$, projection $p \in \mathrm{M}_{\infty}(A_i)$,
and $N \in \mathbb{N}$, there exists $j_0 > i$ such that for every $j \geq j_0$ and $y \in Y_i$ we have 
\[
ev_y(\phi_{i,j}(p)) = 0 \ \ \mathrm{or} \ \ \mathrm{rank}(ev_y(\phi_{i,j}(p))) \geq N d_j(y);
\]
if $p = \mathbf{1}_{A_i}$, then only the latter statement can hold.  If $y \in X_{j,k} \backslash X_{j,k}^{(0)}$,
then 
\[
\mathrm{rank}(ev_y(\phi_{i,j}(\mathbf{1}_{A_i}))) = \mathrm{rank}(ev_y(\mathbf{1}_{A_j})) = n_j(k) \geq N \mathrm{dim}(X_{j,k}).
\]
It now follows from Theorem \ref{rshrc} that $\liminf_{i \to \infty} \ \mathrm{rc}(A_i) = 0$.

Theorem 4.5 of \cite{To5} would give us strict comparison for $A$ if only each $\phi_i$ were injective.  
The origin of this injectivity hypothesis lies in \cite[Lemma 4.4]{To5}---the proof of \cite[Theorem 4.5]{To5} only 
uses injectivity of the $\phi_i$ in its appeal to this Lemma.  Thus, we must drop injectivity from the assumptions 
of \cite[Lemma 4.4]{To5};  we must prove the following claim:

\vspace{2mm}
\noindent
{\bf Claim:}  Let $B$ be the limit of an inductive sequence $(B_i,\psi_i)$ of C$^*$-algebras, and let $a,b \in \mathrm{M}_{\infty}(B)$
be positive.  If $\psi_{i,\infty}(a) \precsim \psi_{i,\infty}(b)$, then for every $\epsilon > 0$ there is a $j>i$ such that
$(\psi_{i,j}(a)-\epsilon)_+ \precsim \psi_{i,j}(b)$.

\vspace{2mm}
\noindent
{\it Proof of claim.}  If will suffice to prove the claim for $a,b \in B$.  By assumption, there is a sequence $(v_k)$ in 
$B$ such that $v_k b v_k^* \to a$.  We may assume that the $v_k$ lie in the dense local C$^*$-algebra $\cup_i \ \psi_{i,\infty}(B_i)$
(see the proof of \cite[Lemma 4.4]{To5}).  In fact, by compressing our inductive sequence, we may as well assume that
$v_k = \phi_{k,\infty}(w_k)$ for some $w_k \in B_k$.  The statement that $v_k b v_k^* \to a$ can now amounts to
\[
\Vert \psi_{k,\infty}( w_k \psi_{i,k}(b) w_k^* - \psi_{i,k}(a)) \Vert \stackrel{n \to \infty}{\longrightarrow} 0.
\]
Fix $k_0$ large enough that the left hand side above is $< \epsilon$.  Since $\Vert \psi_{k_0,j}(x) \Vert \to \Vert \psi_{k_0,\infty}(x) \Vert$
for any $x \in A_{k_0}$ we may find $j >i$ such that
\[ 
\Vert \psi_{k_0,j}( w_{k_0} \psi_{i,k_0}(b) w_{k_0}^* - \psi_{i,k_0}(a)) \Vert < \epsilon.
\]
Setting $r_j = \psi_{k_0,j}(w_{k_0})$ and appealing to part (iii) of Proposition \ref{basics} we have 
\[
(\psi_{i,j}(a)-\epsilon)_+ \precsim r_j \psi_{i,j}(b) r_j^* \precsim \psi_{i,j}(b), 
\]
as desired.  This
proves the claim, and hence the theorem.
\end{proof}

We collect an improvement of \cite[Theorem 4.5]{To5} as a corollary.

\begin{cors}
Let $A$ be the limit of an inductive sequence of stably finite C$^*$-algebras $(A_i,\phi_i)$, with
each $A_i$ and $\phi_i$ unital.  Suppose that $A$ is simple, and that 
\[
\underset{i \to \infty}{\liminf} \  \mathrm{rc}(A_i) = 0.
\]
It follows that $A$ has strict comparison of positive elements. 
\end{cors}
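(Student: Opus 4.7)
The proof is essentially already carried out inside the proof of Theorem \ref{ashcomp}. There, the hypotheses of recursive subhomogeneity and slow dimension growth entered only to establish $\liminf_{i \to \infty} \mathrm{rc}(A_i) = 0$ via Theorem \ref{rshrc}; everything that came after (the passage to the limit via the Claim which drops the injectivity hypothesis from \cite[Lemma 4.4]{To5}) used only stable finiteness and unitality of the $A_i$ and of the $\phi_i$. Since $\liminf_i \mathrm{rc}(A_i) = 0$ is now exactly our hypothesis, the same argument goes through, and the plan is simply to re-run it in the present generality.

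Spelling out the structure: given positive $a, b \in \mathrm{M}_\infty(A)$ with $d(a) < d(b)$ for every $d \in \mathrm{LDF}(A)$, the first step is to extract a uniform gap $\eta > 0$ using compactness of the 2-quasitrace simplex $\mathrm{QT}(A)$ and the affine bijection $\mathrm{QT}(A) \cong \mathrm{LDF}(A)$. Next, for an arbitrary tolerance $\epsilon > 0$, lift $(a-\epsilon/2)_+$ and $b$ approximately to positive elements $\tilde a_i, \tilde b_i \in \mathrm{M}_\infty(A_i)$ for some $i$ along the subsequence realising $\liminf_i \mathrm{rc}(A_i) = 0$. Then transfer the dimension-function inequality from $A$ to $A_i$ so that $d'(\tilde a_i) + r_i < d'(\tilde b_i)$ for every $d' \in \mathrm{LDF}(A_i)$, where $r_i < \eta/4$ and $r_i \geq \mathrm{rc}(A_i)$. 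Applying $r_i$-strict comparison of $A_i$ yields $\tilde a_i \precsim \tilde b_i$ in $A_i$; pushing forward through $\phi_{i,\infty}$ and invoking the Claim from the proof of Theorem \ref{ashcomp} (which is valid without injectivity of the $\phi_i$) gives $(a-\epsilon)_+ \precsim b$ in $A$. Letting $\epsilon \to 0$ and appealing to Proposition \ref{basics} yields $a \precsim b$.

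The main obstacle is the transfer of the LDF inequality from $A$ to $A_i$: in general $\mathrm{QT}(A_i)$ contains quasitraces that do not arise as pullbacks along $\phi_{i,\infty}$, and we need the separation to persist on \emph{all} of $\mathrm{LDF}(A_i)$, not merely on those dimension functions coming from $A$. Simplicity of $A$ is crucial here; combined with a compactness and lower-semicontinuity argument in the trace simplex (of the sort employed in the proof of \cite[Theorem 4.5]{To5}), one obtains the required inequality on a suitably chosen $A_i$. With this point handled, the remaining ingredients---the approximation used to lift $a$ and $b$, the extraction of the uniform gap $\eta$, and the Claim that handles non-injective connecting maps---are all available from the present excerpt, and the proof is complete.
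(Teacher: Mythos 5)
Your proposal follows exactly the same route as the paper's own proof, which is simply the one-line instruction ``follow the proof of \cite[Theorem 4.5]{To5} but use the claim in the proof of Theorem \ref{ashcomp} instead of \cite[Lemma 4.4]{To5}''; your observation that nothing in that argument uses the RSH or slow-dimension-growth structure once $\liminf_i \mathrm{rc}(A_i)=0$ is granted is precisely what the paper is asserting. Two small wrinkles in your expanded sketch, neither of which changes the verdict: first, the order of operations in the gap extraction should be reversed --- you should fix the tolerance $\epsilon$ and pass to a \emph{continuous} majorant of $\tau \mapsto d_\tau((a-\epsilon)_+)$ (e.g.\ $\tau \mapsto \tau(g_\epsilon(a))$ for a suitable $g_\epsilon \in C_0(0,\|a\|]$) before invoking compactness, since $\tau \mapsto d_\tau(b)-d_\tau(a)$ is only the difference of two lower semicontinuous maps and need not attain a positive infimum; second, the Claim in the proof of Theorem \ref{ashcomp} is a ``descent'' statement (comparison in the limit detected approximately at a finite stage), so its role is not in pushing the finite-stage comparison forward through $\phi_{i,\infty}$ (which is automatic) but elsewhere in the local--global argument of \cite[Theorem 4.5]{To5}. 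These are presentational rather than substantive issues.
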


\begin{proof}
Follow the proof of \cite[Theorem 4.5]{To5} but use the claim in the proof of Theorem \ref{ashcomp}
instead of \cite[Lemma 4.4]{To5}.
\end{proof}

\begin{cors}
Let $M$ be a compact smooth connected manifold and $h:M \to M$ a minimal diffeomorphism.  It follows that
the transformation group C$^*$-algebra C$^*(M, \mathbb{Z}, h)$ has strict comparison of positive elements.
\end{cors}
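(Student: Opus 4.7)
The plan is to combine two results stated earlier in the paper, so the ``proof'' is essentially a one-line citation. First, I invoke Theorem \ref{diffash} (Lin--Phillips): since $M$ is a compact smooth connected manifold and $h$ is a minimal diffeomorphism, the transformation group C$^*$-algebra $A := \mathrm{C}^*(M,\mathbb{Z},h)$ is a unital simple direct limit of recursive subhomogeneous C$^*$-algebras with slow dimension growth (indeed, with no dimension growth at all, though only the slow growth hypothesis is needed here).

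Second, I apply Theorem \ref{ashcomp} to this direct system. The hypotheses of that theorem are exactly that one has a unital direct sequence of RSH algebras with slow dimension growth whose limit is simple; both conditions are guaranteed by Theorem \ref{diffash} and the minimality of $h$. The conclusion of Theorem \ref{ashcomp} is precisely that $A$ has strict comparison of positive elements, which is what we want.

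No real obstacle arises here: both ingredients are already established. The only thing to verify is the matching of hypotheses, in particular that ``slow dimension growth'' in the sense used in Theorem \ref{diffash} agrees with the notion invoked in Theorem \ref{ashcomp} (which refers to \cite[Definition 1.1]{P5}); this is a bookkeeping check, not a substantive step. The simplicity of $A$ follows from the minimality of $h$ (and freeness of the $\mathbb{Z}$-action on an infinite compact space), as is standard. Thus the corollary reduces to a direct concatenation: Theorem \ref{diffash} $\Longrightarrow$ Theorem \ref{ashcomp} applies $\Longrightarrow$ strict comparison for $A$.
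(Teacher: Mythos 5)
Your proposal matches the paper's proof exactly: it cites the Lin--Phillips structure theorem (Theorem \ref{diffash}) to write $\mathrm{C}^*(M,\mathbb{Z},h)$ as a unital simple limit of RSH algebras with slow dimension growth, and then applies Theorem \ref{ashcomp}. The only cosmetic difference is that the paper phrases the first step as an appeal to ``the main result of \cite{LP}'' rather than to its restatement as Theorem \ref{diffash}, but these are the same thing.
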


\begin{proof}
By the main result of \cite{LP}, C$^*(M,\mathbb{Z},h)$ can be written as the limit of an inductive sequence of recursive
subhomogeneous C$^*$-algebras with slow dimension growth.  Apply Theorem \ref{ashcomp}.
\end{proof}

\subsection{The structure of the Cuntz semigroup}\label{cuntz}

The Cuntz semigroup is a sensitive invariant in the matter of distinguishing simple separable
amenable C$^*$-algebras, and has recently received considerable attention (see \cite{BPT}, \cite{BT}, \cite{CE}, 
\cite{CEI}, \cite{CRS}, \cite{ET}, \cite{To2}, and \cite{To5}).  It is, however, very difficult to compute in general---see
\cite[Lemma 5.1]{To2}.  This situation improves dramatically in the case of simple C$^*$-algebras with strict comparison
of positive elements.

Let $A$ be a unital, simple, exact, stably finite C$^*$-algebra.  In this case we may write $W(A) = V(A) \sqcup W(A)_+$ (as sets),
where $V(A)$ denotes the semigroup of Murray-von Neumann equivalence classes of projections in $\mathrm{M}_{\infty}(A)$---here
interpreted as the those Cuntz equivalence classes represented by a projection---and 
$W(A)_+$ denotes the subsemigroup of $W(A)$ consisting of Cuntz classes represented by positive elements having zero as an
accumulation point of their spectrum (cf. \cite{PT}).   Let $\mathrm{LAff}_b(\mathrm{T}(A))_{++}$ denote the set of
lower semicontinuous, affine, bounded, strictly positive functions on the tracial state space of $A$, and define a map
$\iota: W(A) \to \mathrm{LAff}_b(\mathrm{T}(A))_{++}$ by $\iota(\langle a \rangle)(\tau) = d_\tau(a)$.  We endow the set
\[
V(A) \sqcup \mathrm{LAff}_b(\mathrm{T}(A))_{++}
\]
with an Abelian binary operation $+_W$ which restricts to the usual semigroup operation in each component and is given by 
$x +_W f = \iota(x) + f $ for $x \in V(A)$ and $f \in \mathrm{LAff}_b(\mathrm{T}(A))_{++}$.  We also define a partial order $\leq_W$ on this set
which restricts to the usual partial orders in each component and satisfies
\begin{enumerate}
\item[(i)] $x \leq_W f$ if and only if $\iota(x) < f$, and
\item[(ii)] $x \geq_W f$ if and only if $\iota(x) \geq f$.
\end{enumerate}

\begin{thms}[Brown-Perera-T \cite{BPT}, Coward-Elliott-Ivanescu \cite{CEI}]\label{cuntzembed}
Let $A$ be a simple, unital, exact, and stably finite C$^*$-algebra with strict comparison of positive elements.  It follows that the map
\[
V(A) \sqcup W(A)_+ \stackrel{\mathrm{id} \sqcup \iota}{\longrightarrow} V(A) \sqcup \mathrm{LAff}_b(\mathrm{T}(A))_{++}
\]
is a semigroup order embedding.
\end{thms}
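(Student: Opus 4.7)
The plan is to check, case by case for the four combinations of arguments drawn from $V(A)$ and $W(A)_+$, that $\mathrm{id}\sqcup\iota$ is additive, order-preserving, and order-reflecting; injectivity then follows from the embedding property. Additivity is almost formal: sums of projection classes remain projection classes, sums within $W(A)_+$ stay purely positive since $0$ remains an accumulation point, and for a mixed sum $[p]+\langle a\rangle\in W(A)_+$ one has $d_\tau(p\oplus a)=\tau(p)+d_\tau(a)$, which matches $[p]+_W\iota(\langle a\rangle)$ by the definition of $+_W$.

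The order is where the work lies, and I would isolate at the outset the following strict monotonicity: for purely positive $a\in\mathrm{M}_\infty(A)_+$, $\tau\in\mathrm{QT}(A)$, and $\epsilon>0$,
\[
d_\tau((a-\epsilon)_+) \;<\; d_\tau(a).
\]
To see this, choose a continuous $g:[0,\|a\|]\to[0,\infty)$ supported in $(0,\epsilon)$ with $g(t)\leq t$ and $g$ nonzero on some point of $\sigma(a)\cap(0,\epsilon)$ (which exists by the accumulation-point hypothesis); then $g(a)\neq 0$, $g(a)\perp(a-\epsilon)_+$, and $g(a)+(a-\epsilon)_+\leq a$. Simplicity of $A$ forces every quasitrace to be faithful, so $d_\tau(g(a))>0$, and additivity of $d_\tau$ on orthogonal elements yields the strict inequality.

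With this lemma in hand, each order check reduces to applications of strict comparison and Proposition~\ref{basics}(ii). Within $V(A)$, the identity trivially preserves and reflects Murray--von Neumann comparison. Within $W(A)_+$, the forward implication is monotonicity of $d_\tau$, and the converse combines the lemma with strict comparison: $d_\tau(a)\leq d_\tau(b)$ yields $d_\tau((a-\epsilon)_+)<d_\tau(b)$ for every $\tau$ and $\epsilon>0$, hence $(a-\epsilon)_+\precsim b$, hence $a\precsim b$. For the cross-comparison $[p]\leq_W\langle a\rangle$, Proposition~\ref{basics}(ii) upgrades $p\precsim a$ to $p\precsim(a-\delta)_+$ for some $\delta>0$, giving the strict inequality $\tau(p)<d_\tau(a)$ required by condition (i) in the definition of $\leq_W$; the converse is immediate from strict comparison. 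The reverse cross-comparison $\langle a\rangle\leq_W[p]$ is handled identically to the $W(A)_+$-$W(A)_+$ case by cutting $a$ down and letting $\epsilon\to 0$.

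The only real obstacle is the book-keeping at the $V(A)$--$\mathrm{LAff}_b$ interface, where the definition of $\leq_W$ demands strict inequality between $\iota(x)$ and $f$ in one direction but only weak inequality in the other. This asymmetry tracks precisely the projection-versus-purely-positive dichotomy, and once the strict monotonicity lemma is established it is accommodated automatically by the projection-improvement trick built into Proposition~\ref{basics}(ii); no further technology beyond strict comparison is needed.
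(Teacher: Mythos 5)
The paper states Theorem \ref{cuntzembed} as a result of Brown--Perera--Toms \cite{BPT} and Coward--Elliott--Ivanescu \cite{CEI} and gives no proof, so there is no in-paper argument to compare against; your proof is a correct and self-contained account that matches the structure of the argument in the cited literature. The strict-monotonicity lemma you isolate, $d_\tau((a-\epsilon)_+)<d_\tau(a)$ for purely positive $a$ (obtained by wedging a nonzero $g(a)$ orthogonally under $a$ in the gap $(0,\epsilon)$ and invoking faithfulness of traces on a simple algebra), is precisely the technical point that reconciles the strict inequality built into condition (i) of $\leq_W$ with the $\epsilon$-cut-down characterization of Cuntz subequivalence in Proposition \ref{basics}(ii), and your four case-checks then discharge the order-embedding claim by routine applications of strict comparison.
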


If $A$ is infinite-dimensional and monotracial, then the embedding of Theorem \ref{cuntzembed} is an isomorphism.  We suspect that the
monotracial assumption is unneccessary.  Theorem \ref{cuntzembed} applies to ASH algebras as in Theorem \ref{ashcomp}, and so to the
minimal diffeomorphism C$^*$-algebras C$^*(M,\mathbb{Z},h)$ considered above.

\subsection{A conjecture of Blackadar-Handelman}\label{bh}

Blackadar and Handelman conjectured in 1982 that the lower semicontinuous dimension functions on a 
C$^*$-algebra should be dense in the set of all dimension functions.  This conjecture was proved 
for C$^*$-algebras as in Theorem \ref{cuntzembed} in \cite[Theorem 6.4]{BPT}.  Thus, we have the 
following result.

\begin{thms} Let $A$ be a C$^*$-algebra as in Theorem \ref{cuntzembed} (in particular, $A$ could 
be the C$^*$-algebra of a minimal diffeomorphism).  It follows that the lower semicontinuous dimension 
functions on $A$ are weakly dense in the set of all dimension functions on $A$.
\end{thms}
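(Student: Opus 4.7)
The plan is to appeal directly to \cite[Theorem 6.4]{BPT}, whose hypotheses are precisely those of Theorem \ref{cuntzembed}. So the proof reduces to verifying that every $A$ as in Theorem \ref{cuntzembed} already satisfies all conditions required by the Blackadar--Handelman result proved there. Since the statement of the present theorem says ``$A$ is as in Theorem \ref{cuntzembed}'', this verification is immediate; in the particular case of a minimal diffeomorphism C$^*$-algebra $C^*(M,\mathbb{Z},h)$, it proceeds by first invoking Lin--Phillips (Theorem \ref{diffash}) to realise the algebra as a unital simple direct limit of recursive subhomogeneous algebras with slow dimension growth, then invoking Theorem \ref{ashcomp} to conclude that the algebra has strict comparison of positive elements, and finally noting that unitality, simplicity, exactness (nuclearity suffices), and stable finiteness are either given by the dynamical construction or inherited from the building blocks.

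With these ingredients checked, the structure theorem Theorem \ref{cuntzembed} applies, identifying $W(A)_+$ order-theoretically with a sub-semigroup of $\mathrm{LAff}_b(\mathrm{T}(A))_{++}$ via the map $\iota$, and writing $W(A)=V(A)\sqcup W(A)_+$ as a disjoint union with the compatible order $\leq_W$. This is exactly the setting in which \cite[Theorem 6.4]{BPT} is formulated, so invoking it produces the desired weak density of $\mathrm{LDF}(A)$ in $\mathrm{DF}(A)$.

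If one were to reprove the density here rather than quote, the main point would be the following. A general dimension function $d \in \mathrm{DF}(A)$ is a state on $\mathrm{K}_0^*(A)$ and in particular assigns values to Cuntz classes of positive elements, but need not be lower semicontinuous, i.e., need not satisfy $d((a-\epsilon)_+) \nearrow d(a)$. Using the order embedding of Theorem \ref{cuntzembed}, one pulls $d$ back to a state on a sub-ordered semigroup of $V(A)\sqcup \mathrm{LAff}_b(\mathrm{T}(A))_{++}$ and approximates it weakly by evaluation-at-a-trace functionals, which are automatically lower semicontinuous and thus belong to $\mathrm{LDF}(A)$ via the bijection $\mathrm{QT}(A)\leftrightarrow \mathrm{LDF}(A)$ of \cite[Theorem II.2.2]{BH}. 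The technical heart of \cite[Theorem 6.4]{BPT}---and the step one would expect to be the main obstacle in a self-contained rendering---is precisely this approximation argument on $\mathrm{LAff}_b(\mathrm{T}(A))_{++}$, where one must take care to match the values $d$ assigns on the projection part $V(A)$ while simultaneously weakly approximating $d$ on the purely positive part. Since all of this is already carried out in \cite{BPT}, the cleanest presentation is to state the result as an immediate corollary.
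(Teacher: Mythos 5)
Your proof is correct and takes essentially the same approach as the paper: the theorem is deduced as an immediate corollary of \cite[Theorem 6.4]{BPT}, whose hypotheses coincide exactly with those of Theorem \ref{cuntzembed}. The paper gives precisely this one-line justification (noting that the conjecture was already established in \cite{BPT} for algebras satisfying the hypotheses of Theorem \ref{cuntzembed}), so your additional verification of hypotheses for minimal diffeomorphism algebras and your sketch of the internals of \cite[Theorem 6.4]{BPT} are consistent elaborations rather than a departure.
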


\subsection{Classifying Hilbert modules}\label{hilbert}

In \cite{CEI}, Coward, Elliott, and Ivanescu gave a new presentation of the Cuntz semigroup.  Given a C$^*$-algebra $A$, they considered positive
elements in $A \otimes \mathcal{K}$ (as opposed to $\mathrm{M}_{\infty}(A)$, as we have done---the difference is ultimately immaterial).  
If $A$ is separable, then the hereditary subalgebras of
$A \otimes \mathcal{K}$ are singly generated, and any two generators of a fixed hereditary subalgebra are Cuntz equivalent.  
Thus, Cuntz equivalence factors through the passage from a positive element to the hereditary subalgebra it generates.  
These hereditary subalgebras are in one-to-one 
correspondence with countably generated Hilbert $A$-modules, and in \cite{CEI} the notion of Cuntz equivalence, considered as a relation on
hereditary subalgebras, is translated into a relation on Hilbert modules.  Thus, we may speak of Cuntz equivalence between countably generated
Hilbert $A$-modules.

\begin{thms}[Coward-Elliott-Ivanescu, \cite{CEI}]\label{stabrankone}
Let $A$ be a C$^*$-algebra of stable rank one.  It follows that countably generated Hilbert $A$-modules $X$ and $Y$ are Cuntz equivalent if and 
only if they are isomorphic.
\end{thms}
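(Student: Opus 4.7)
My plan is to exploit the bijective correspondence (set up in \cite{CEI} and alluded to in the excerpt) between countably generated Hilbert $A$-modules $X$ and hereditary subalgebras $\mathcal{K}(X)$ of $A \otimes \mathcal{K}$. Under this correspondence, $X \cong Y$ as Hilbert modules precisely when $\mathcal{K}(X)$ and $\mathcal{K}(Y)$ are conjugate by a unitary in $\mathcal{M}(A \otimes \mathcal{K})$. Since $A$ is separable (which we may assume, via a standard reduction to the separable sub-object containing a given pair of generators), these hereditary subalgebras are singly generated by strictly positive elements $a$ and $b$, and Cuntz equivalence of the modules is, by construction in \cite{CEI}, the same as $a \sim b$. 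In this language the theorem becomes: if $a,b \in (A \otimes \mathcal{K})_+$ satisfy $a \sim b$, then $\overline{a(A \otimes \mathcal{K})a}$ and $\overline{b(A \otimes \mathcal{K})b}$ are unitarily equivalent in $\mathcal{M}(A \otimes \mathcal{K})$.

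One direction is immediate from this translation: an isomorphism $X \cong Y$ transports $\mathcal{K}(X)$ onto $\mathcal{K}(Y)$, and in particular carries a positive generator of one onto a Cuntz-equivalent generator of the other. For the hard direction, from $a \precsim b$ and $b \precsim a$ I extract sequences $(s_n),(t_n)$ with $s_n b s_n^* \to a$ and $t_n a t_n^* \to b$; up to passing to $(a-\epsilon)_+, (b-\delta)_+$ via Proposition \ref{basics} I may further arrange that $s_n^* s_n$ lies in $\overline{b(A \otimes \mathcal{K})b}$ and $s_n s_n^*$ in $\overline{a(A \otimes \mathcal{K})a}$, and similarly for $t_n$. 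In a general C$^*$-algebra these $s_n$ only have polar decompositions in the bidual, and that is too coarse to produce a multiplier-algebra unitary. Here the stable rank one hypothesis enters: since $A$ (hence $A \otimes \mathcal{K}$, and hence its multiplier algebra for the purposes of approximating individual elements) has stable rank one, for every $\epsilon > 0$ each $s_n$ admits an approximation $\Vert s_n - u_n |s_n| \Vert < \epsilon$ with $u_n$ unitary in $\widetilde{A \otimes \mathcal{K}}$ (or in the multiplier algebra after stabilisation). Thus the ``unitary part'' of the approximate polar decomposition lives in the algebra itself, not merely in its bidual.

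The final step is to feed these unitary approximations into an Elliott-style two-sided approximate intertwining between the two hereditary subalgebras, using the standard telescoping trick to guarantee Cauchy convergence in the point-norm topology. The telescope pairs off $u_n$'s on one side and corresponding unitaries built from $t_n$'s on the other, tightening the tolerances geometrically so that the intertwining converges to an honest isomorphism $\overline{a(A \otimes \mathcal{K})a} \to \overline{b(A \otimes \mathcal{K})b}$; this isomorphism is implemented by the point-norm limit of the $u_n$, which is a unitary in $\mathcal{M}(A \otimes \mathcal{K})$, and thereby descends to the sought isomorphism $X \cong Y$. The principal obstacle is exactly this passage from a merely approximate polar decomposition to a multiplier-algebra unitary conjugating the hereditary subalgebras; stable rank one is the minimal hypothesis that makes the required approximating unitaries exist inside the algebra, and once they do, the approximate-intertwining machinery delivers the limit isomorphism essentially automatically.
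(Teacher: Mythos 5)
The paper does not prove this theorem; it is stated purely as a citation of Coward--Elliott--Ivanescu \cite{CEI}, so there is no ``paper's own proof'' to compare against. Your sketch is broadly consistent with the CEI argument: translate Cuntz equivalence of Hilbert modules into Cuntz equivalence of positive generators of hereditary subalgebras of $A \otimes \mathcal{K}$, use stable rank one to replace the bidual polar decomposition by a unitary in the unitisation, and then run an Elliott-type two-sided approximate intertwining. That is, the key role you identify for stable rank one is the right one.

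A few places in the write-up need correction or sharpening. The parenthetical claiming that the multiplier algebra of $A \otimes \mathcal{K}$ ``has stable rank one for the purposes of approximating individual elements'' is not a meaningful statement and is false as literally stated --- multiplier algebras of stable algebras have large stable rank ($\mathcal{M}(\mathcal{K}) = B(H)$ is the basic counterexample). What you actually want, and what is true, is that $A \otimes \mathcal{K}$ has stable rank one whenever $A$ does (each $\mathrm{M}_n(A)$ has stable rank one and stable rank is lower semicontinuous under inductive limits), hence the unitisation $\widetilde{A \otimes \mathcal{K}}$ has stable rank one, and its unitaries sit inside $\mathcal{M}(A \otimes \mathcal{K})$. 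Second, the passage from $s_n b s_n^* \to a$ to the assertion that $s_n^* s_n \in \overline{b(A\otimes\mathcal{K})b}$ and $s_n s_n^* \in \overline{a(A\otimes\mathcal{K})a}$ is not delivered by Proposition \ref{basics} alone; you need R{\o}rdam's factorisation lemma, which produces from $\Vert v b v^* - a\Vert < \epsilon$ an element $x$ with $x^*x = (a - \epsilon)_+$ and $xx^* \in \overline{b(A\otimes\mathcal{K})b}$, and it is \emph{this} $x$ whose approximate polar decomposition $x \approx u\vert x\vert$ with $u$ unitary one feeds into the intertwining. Finally, ``the point-norm limit of the $u_n$'' is not literally what produces the multiplier-algebra unitary; what converges in norm are elements of the form $u_n c u_n^*$ for $c$ in a countable dense set, and after the usual correction step of the intertwining one arranges the $u_n$ to be strictly Cauchy, so that the limit is a unitary in $\mathcal{M}(A\otimes\mathcal{K})$ conjugating $\overline{a(A\otimes\mathcal{K})a}$ onto $\overline{b(A\otimes\mathcal{K})b}$. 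These are all repairable technical points, and the overall strategy is sound.
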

 
 \begin{cors}\label{stabrankonecor}
 Let $A$ be as in Theorem \ref{ashcomp}.  Suppose further that $A$ has stable rank one.  
(In particular, $A$ could by the C$^*$-algebra of a 
minimal diffeomorphism, as these have stable rank one by the main result of \cite{P5}.)  
It follows that countably generated Hilbert $A$-modules 
$X$ and $Y$ are isomorphic if and only if they are Cuntz equivalent.
 \end{cors}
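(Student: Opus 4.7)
The plan is to invoke Theorem \ref{stabrankone} directly. The hypothesis of Corollary \ref{stabrankonecor} includes the assumption that $A$ has stable rank one, and Theorem \ref{stabrankone} (Coward-Elliott-Ivanescu) asserts precisely that over a C$^*$-algebra of stable rank one, two countably generated Hilbert modules are isomorphic if and only if they are Cuntz equivalent. So the only real step is to apply this theorem to the given $A$.

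The role of Theorem \ref{ashcomp} in the corollary is not to contribute to the proof of the literal equivalence, but to identify a natural and interesting class of algebras meeting the hypotheses. In particular, if $M$ is a compact smooth connected manifold and $h:M \to M$ is a minimal diffeomorphism, then by Theorem \ref{diffash} the transformation group C$^*$-algebra $C^*(M, \mathbb{Z}, h)$ is a unital simple direct limit of recursive subhomogeneous C$^*$-algebras with slow dimension growth, and by the main result of \cite{P5} it has stable rank one. Hence Corollary \ref{stabrankonecor} applies to $C^*(M, \mathbb{Z}, h)$, which is the content highlighted in the parenthetical remark.

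Since the proof is essentially a single invocation of a previously cited theorem, there is no obstacle of substance; the strict comparison supplied by Theorem \ref{ashcomp} is not needed for the literal biconditional, though it is what makes the resulting classification of Hilbert modules \emph{effective} via Theorem \ref{cuntzembed}, reducing isomorphism to a description in terms of $\mathrm{K}$-theory and traces. The value of the corollary therefore lies in the new examples of algebras to which the Coward-Elliott-Ivanescu classification applies, rather than in any new technical step.
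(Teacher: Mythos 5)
Your proposal is correct and matches the paper's (implicit) reasoning: the corollary is a direct application of Theorem \ref{stabrankone}, with Theorem \ref{ashcomp} and the stable rank one hypothesis serving to identify the class of algebras (and, via strict comparison, to make the resulting classification effective). The paper offers no separate proof, treating it as immediate in exactly this way.
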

 
 If $X$ and $Y$ as in Corollary \ref{stabrankonecor} are finitely generated and projective, 
then they are Cuntz equivalent if and only if the projections
 in $A \otimes \mathcal{K}$ which generate them as closed right ideals have the same $\mathrm{K}_0$-class.  
Otherwise, $X$ has associated
 to it an affine function on the tracial state space of $A$:  one extends the map $\iota$ of Subsection \ref{cuntz} to have domain
 $A \otimes \mathcal{K}$, applies it to any positive element of $A \otimes \mathcal{K}$ which generates $X$ as a closed right ideal.
 This function determines non-finitely generated $X$ up to isomorphism.  This classification
 of Hilbert $A$-modules is analogous to the  classification of W$^*$-modules over a $\mathrm{II}_1$ factor.  
We refer the reader to Section 3 of \cite{BT} for further details.

\subsection{Classifying self-adjoints}\label{selfadjoint}
 
 We say that self-adjoint elements $a$ and $b$ in a unital C$^*$-algebra $A$ are approximately unitarily equivalent if there is a sequence 
 $(u_n)_{n=1}^{\infty}$ of unitaries in $A$ such that $u_nau_n^* \to b$.  For $a \in A_+$ we let $\phi_a: \mathrm{C}^*(a,\mathbf{1}) \hookrightarrow A$
denote the canonical embedding.  Denote by $\mathbf{Ell}(a)$ the following pair of induced maps:
\[
\mathrm{K}_0(\phi_a): \mathrm{K}_0(\mathrm{C}^*(a,\mathbf{1})) \to \mathrm{K}_0(A); \ \ \phi_a^{\sharp}: \mathrm{T}(A) \to 
\mathrm{T}(\mathrm{C}^*(a,1)).
\]

\begin{thms}[Brown-T, \cite{BT}]\label{auclass}
Let $A$ be a unital simple exact C$^*$-algebra of stable rank one and strict comparison (in particular, $A$ could have stable rank one and
satisfy the hypotheses of Theorem \ref{ashcomp}).  If $a,b \in A_+$, then $a$ and $b$ are approximately unitarily equivalent if and only if
$\sigma(a) = \sigma(b)$ and $\mathbf{Ell}(a) = \mathbf{Ell}(b)$.  
\end{thms}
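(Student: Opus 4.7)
The forward implication is routine: if $u_n a u_n^* \to b$, then $\sigma(a) = \sigma(b)$ by continuity of the spectrum on self-adjoints together with unitary invariance, and the inner automorphisms $\mathrm{Ad}(u_n)$ act trivially on $\mathrm{K}_0$ and on traces, hence pass to the limit to identify $\mathrm{K}_0(\phi_a) = \mathrm{K}_0(\phi_b)$ and $\phi_a^\sharp = \phi_b^\sharp$. This gives $\mathbf{Ell}(a) = \mathbf{Ell}(b)$.

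For the converse, the plan is to invoke the Ciuperca-Elliott theorem (\cite{CE}) cited in the introduction, which reduces approximate unitary equivalence of positive elements to equality of the induced maps $W(C_0(0,1]) \to W(A)$ that send the generator to $a$ and to $b$ respectively. Because $\sigma(a) = \sigma(b)$, both maps factor through functional calculus on the common spectrum, so it suffices to show that the embeddings $\phi_a, \phi_b : C^*(a,\mathbf{1}) \hookrightarrow A$ induce the same map on Cuntz semigroups, under the canonical identification $C^*(a,\mathbf{1}) \cong C(\sigma(a)) \cong C^*(b,\mathbf{1})$ sending $a \mapsto b$.

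To carry this out I will apply the structure theorem for the Cuntz semigroup, Theorem \ref{cuntzembed}, which splits $W(A) = V(A) \sqcup W(A)_+$ and embeds $W(A)_+$ into $\mathrm{LAff}_b(\mathrm{T}(A))_{++}$ via $\iota$. A Cuntz class in $W(C^*(a,\mathbf{1}))$ represented by a positive element $f$ (possibly matrix valued) is sent into $V(A)$ precisely when the open support $\{f>0\}$ meets $\sigma(a)$ in a clopen set; in that case $f(a)$ is Cuntz equivalent to a projection in some $M_k(C^*(a,\mathbf{1}))$ whose $\mathrm{K}_0$-class in $A$ is controlled by $\mathrm{K}_0(\phi_a)$. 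In the remaining case the image lies in $W(A)_+$ and is pinned down under $\iota$ by its dimension function $\tau \mapsto d_\tau(f(a))$, and the identity
\[
d_\tau(f(a)) = d_{\phi_a^\sharp(\tau)}(f)
\]
shows that this function depends only on $\phi_a^\sharp$. Both pieces of determining data coincide for $b$ by hypothesis, so $W(\phi_a) = W(\phi_b)$, and the Ciuperca-Elliott theorem delivers $a \sim_{\mathrm{au}} b$.

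The main obstacle will be rigorously handling matrix amplifications: I must check that every Cuntz class in $W(C^*(a,\mathbf{1}))$, including those arising from matrix-valued positive functions of non-constant rank on $\sigma(a)$, is classified under $W(\phi_a)$ by its $V(A)$-component together with its dimension function, with no further hidden information. This is where the order-embedding statement of Theorem \ref{cuntzembed} is essential, ensuring that $W(A)_+$ carries no finer invariant than its dimension functions; the strict comparison hypothesis provided by Theorem \ref{ashcomp} and the stable rank one hypothesis are both needed to license this appeal, the latter through the Coward-Elliott-Ivanescu Hilbert module picture that underlies the description of $W(A)$.
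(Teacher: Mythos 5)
The paper itself gives no proof of this theorem --- it is quoted verbatim from the cited reference \cite{BT} --- so the relevant comparison is with the argument of Brown and Toms, and your proposal does recover that argument: reduce to the Ciuperca--Elliott criterion (equality of the induced Cuntz-semigroup morphisms from a cone, under the stable rank one hypothesis), then show the Cuntz morphism $W(\phi_a)$ is pinned down by $\mathbf{Ell}(a)$ using the order embedding $W(A) \hookrightarrow V(A) \sqcup \mathrm{LAff}_b(\mathrm{T}(A))_{++}$ furnished by strict comparison and the identity $d_\tau(f(a)) = d_{\phi_a^\sharp(\tau)}(f)$.

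One concrete slip worth fixing: your criterion that $\langle f(a) \rangle$ lands in $V(A)$ precisely when the open support $\{f>0\}$ meets $\sigma(a)$ in a clopen set is only correct for scalar-valued $f$. For $f \in \mathrm{M}_k(\mathrm{C}(\sigma(a)))_+$ the correct dichotomy is whether the rank function $t \mapsto \mathrm{rank}\,f(t)$ is locally constant on $\sigma(a)$, equivalently whether $0$ is isolated in (or absent from) $\sigma(f(a)) = \bigcup_{t\in\sigma(a)}\sigma(f(t))$; for example $f(t) = \mathrm{diag}(1,t)$ on $\sigma(a)=[0,1]$ has clopen (indeed full) nonvanishing locus but $0$ is not isolated in $\sigma(f(a))$. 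With the corrected criterion the rest goes through: when the rank is locally constant the support projection of $f$ is a genuine projection over $\mathrm{C}(\sigma(a))$, so its image is controlled by $\mathrm{K}_0(\phi_a)$ together with cancellation (stable rank one); otherwise $\langle f(a)\rangle \in W(A)_+$ and is determined by $\phi_a^\sharp$ via $\iota$. You should also note that Ciuperca--Elliott is formulated in the stabilized Cuntz semigroup $\mathrm{Cu}(A)$ (positive elements of $A\otimes\mathcal{K}$) rather than $W(A)$, so one needs either the $\mathrm{Cu}$-version of the structure theorem or a short argument passing between the two; this is handled in \cite{BT} and is not a substantive obstacle here.
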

 
\subsection{The range of the radius of comparison, with applications}\label{anticlasssec}

The classification theory of operator algebras is a rich field.  It was begun by Murray and von Neumann with their type classification
of factors in the 1930s, and has been active ever since.  In the presence of certain regularising assumptions, the theory is well-behaved.  For
instance, there is a complete classification of injective factors with separable predual (due to Connes and Haagerup---see \cite{co2} and
\cite{ha2}), and a similarly successful classification program for simple C$^*$-algebras upon replacing injectivity and separability of the 
predual with amenability and norm-separability, respectively (see \cite{ET} and \cite{R1}).  

Without these regularising assumptions, the theory is fractious, but nonetheless interesting.  One of the landmarks on this side of
the theory is McDuff's construction of uncountably many non-isomorphic factors of type $\mathrm{II}_1$ (\cite{Mc}). 
(More recently there is Popa's work on $\mathrm{II}_1$ factors with Betti numbers invariants---see \cite{P}.) One might view McDuff's result as
saying that there are uncountably many non-isomorphic factors which all have the same naive invariant, namely, the mere fact that they
are $\mathrm{II}_1$ factors.  (Connes proved that there is only one injective $\mathrm{II}_1$ factor with separable predual.)
Here we prove an analogue of McDuff's theorem for simple, separable, amenable C$^*$-algebras, where the corresponding naive invariant
consists of Banach algebra $\mathrm{K}$-theory and positive traces.  We even obtain a somewhat stronger result, replacing non-isomorphism
with non-Morita-equivalence.  In passing we prove that the range of the radius of comparison is exhausted by simple C$^*$-algebras, a result which
represents the first exact calculations of the radius of comparison for any simple C$^*$-algebra.

Recall that the Elliott invariant of a C$^*$-algebra $A$ is the 4-tuple
\begin{equation}\label{ell}
\mathrm{Ell}(A) := \left( (\mathrm{K}_0A,\mathrm{K}_0A^+,\Sigma_A),\mathrm{K}_1A,\mathrm{T}^+A,\rho_A \right),
\end{equation}
where the $\mathrm{K}$-groups are the Banach algebra ones, $\mathrm{K}_0A^+$ is the image
of the Murray-von Neumann semigroup $\mathrm{V}(A)$ under the Grothendieck map, $\Sigma_A$ is
the subset of $\mathrm{K}_0A$ corresponding to projections in $A$, $\mathrm{T}^+A$ is
the space of positive tracial linear functionals on $A$, and $\rho_A$ is the natural pairing
of $\mathrm{T}^+A$ and $\mathrm{K}_0A$ given by evaluating a trace at a $\mathrm{K}_0$-class.

\begin{thms}\label{anticlass}
There is a family $\{A^{(r)}\}_{r \in \mathbb{R}^+ \backslash \{0\}}$ of simple, separable, amenable C$^*$-algebras such that 
$\mathrm{rc}(A_r) = r$ and $\mathrm{Ell}(A_r) \cong \mathrm{Ell}(A_s)$ for every $s, r \in \mathbb{R}^+ \backslash \{0\}$.  
In particular, $A_r \ncong A_s$ whenever $r \neq s$.  If $A_s$ and $A_r$ are Morita equivalent, then $s/r \in \mathbb{Q}$.
\end{thms}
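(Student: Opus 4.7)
The plan is to realize each $A_r$ as a unital simple inductive limit of homogeneous C$^*$-algebras of Villadsen type, arrange the Elliott invariants to coincide across $r$, derive the upper bound on $\mathrm{rc}$ from Theorem \ref{rshrc}, and derive the lower bound from a characteristic-class obstruction. For each $r>0$ I would choose positive integers $m_n$, $k_n$ with $m_n/k_n \to 2r$, set $X_n^{(r)} = (S^2)^{m_n}$, and take $A_r^{(n)} = \mathrm{M}_{k_n}(\mathrm{C}(X_n^{(r)}))$. The connecting maps $\phi_n$ would have two kinds of summands: point evaluations (with image points spread densely enough to force simplicity and to make the normalized traces converge to a single tracial state in the limit), and $*$-homomorphisms induced by coordinate projections $X_{n+1}^{(r)} \to X_n^{(r)}$ (which preserve characteristic classes under pull-back). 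The multiplicities would be calibrated so that, independently of $r$, the limits have $\mathrm{K}_0(A_r) \cong \mathbb{Z}[1/d]$ with the natural order, $\mathrm{K}_1(A_r) = 0$, unique tracial state, and $[1_{A_r}] = 1$, so that $\mathrm{Ell}(A_r) \cong \mathrm{Ell}(A_s)$ for all $r,s$.

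For the upper bound $\mathrm{rc}(A_r) \leq r$, I would apply Theorem \ref{rshrc} at each finite stage to obtain $\mathrm{rc}(A_r^{(n)}) \leq (\dim X_n^{(r)} - 1)/(2k_n) \to r$, and then push this to the inductive limit by an argument parallel to the proof of Theorem \ref{ashcomp}, using the Cuntz-subequivalence claim embedded there (valid for possibly non-injective connecting maps).

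For the lower bound $\mathrm{rc}(A_r) \geq r$, the key input is the non-vanishing of the top Chern class of the $m$-fold external product $\xi^{\boxtimes m}$ of the Bott line bundle on $(S^2)^m$. For each $\epsilon > 0$, at a sufficiently late stage, I would exhibit positive elements $a, b \in A_r^{(n)}$ with ranks differing by roughly $(r-\epsilon) k_n$ such that the tracial inequality $d_\tau(a) + (r-\epsilon) < d_\tau(b)$ holds for every $\tau$, but such that $a \precsim b$ in $A_r$ would force a pointwise rank inequality inconsistent with the non-vanishing top Chern class of the vector bundle representing $[b]-[a]$. Because the connecting maps act by pull-back along coordinate projections, the characteristic-class obstruction is preserved through the system and survives to the limit, yielding $\mathrm{rc}(A_r) \geq r - \epsilon$ for every $\epsilon$. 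This last step is the main obstacle: tracking the obstruction through the inductive system and quantifying the resulting failure of Cuntz comparison with enough precision to recover $r$ in the limit $\epsilon \to 0$.

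For the Morita-equivalence rigidity, recall that $\mathrm{rc}$ scales under passage to full corners: if $A$ has unique normalized trace $\tau$ (extended canonically to $A \otimes \mathcal{K}$) and $p \in A \otimes \mathcal{K}$ is a full projection, then $\mathrm{rc}(p(A \otimes \mathcal{K})p) = \mathrm{rc}(A)/\tau(p)$. A Morita equivalence between $A_r$ and $A_s$ produces a full projection $p \in A_r \otimes \mathcal{K}$ with $A_s \cong p(A_r \otimes \mathcal{K})p$, whence $s = r/\tau(p)$. Since $\tau(\mathrm{K}_0(A_r)) \subseteq \mathbb{Q}$ by the Elliott-invariant specification, $\tau(p) \in \mathbb{Q}$, so $s/r \in \mathbb{Q}$.
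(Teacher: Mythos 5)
Your overall blueprint---Villadsen-type AH algebras with a mix of coordinate-projection and point-evaluation eigenvalue maps, calibrated so that the dimension-to-matrix-size ratio converges to $2r$, with Theorem \ref{rshrc} for the upper bound and a Chern-class obstruction for the lower bound---is the right one and matches the paper in spirit.  But there are three genuine gaps, two of which stem from the same miscalculation.

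First, the claim of a unique tracial state is inconsistent with $\mathrm{rc}(A_r)=r>0$.  For the ratio $\dim X_n / k_n$ to stabilize at $2r>0$, the fraction of point-evaluation summands in the connecting maps must vanish summably (i.e.\ $\sum l_i/n_i<\infty$ in the paper's notation); otherwise the matrix size runs away from the dimension and the Theorem \ref{rshrc} bound forces $\mathrm{rc}\to 0$.  But with a vanishing fraction of point evaluations, the maps $\phi_i^{\sharp}$ on tracial simplexes are essentially averaging maps, and the inverse limit $\varprojlim\,(\mathcal{P}(X_i),\phi_i^{\sharp})$ is not a point (for instance, every product measure $\nu^{\otimes\infty}$ yields a distinct limit trace).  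Monotraciality would make the Elliott-invariant isomorphism trivial; in fact the isomorphism $\mathrm{T}(A^{(r)})\cong\mathrm{T}(A^{(s)})$ is one of the substantial pieces of the proof, established in the paper by a near-commutative intertwining between the two inverse systems of measure spaces.  The same error infects your Morita-rigidity argument: the formula $\mathrm{rc}(p(A\otimes\mathcal{K})p)=\mathrm{rc}(A)/\tau(p)$ is both unverified and predicated on monotraciality.  The paper instead uses $\mathrm{K}_0\cong\mathbb{Q}$ together with stable rank one (from Villadsen's analysis) to pass from a Morita equivalence to $\mathrm{M}_n(A^{(r)})\cong\mathrm{M}_m(A^{(s)})$, and then applies $\mathrm{rc}(\mathrm{M}_n(B))=\mathrm{rc}(B)/n$, cf.\ \cite[Proposition 6.2 (ii)]{To3}.

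Second, taking base spaces $(S^2)^{m_n}$ is a genuine problem for the $\mathrm{K}_0$ computation, not a cosmetic choice.  The pulled-back Bott line bundles are honest projections over $(S^2)^{m_n}$, so $\mathrm{K}_0(\mathrm{M}_{k_n}(\mathrm{C}((S^2)^{m_n})))=\mathbb{Z}^{2^{m_n}}$, and the limit $\mathrm{K}_0(A_r)$ carries contributions from all of these classes and, worse, an ordered structure that depends on the construction parameters---this is precisely how Villadsen manufactures perforation.  Your claim $\mathrm{K}_0(A_r)\cong\mathbb{Z}[1/d]$ with standard order will fail.  The paper uses contractible cubes $[0,1]^{N_i}$ so that $\mathrm{K}_0(A_i)=\mathbb{Z}$ and $\mathrm{K}_0(A^{(r)})\cong\mathbb{Q}$ once every natural number divides some $m_i$; the comparison obstruction is then exhibited using a \emph{positive element} $b$ of varying rank whose restriction to an embedded sphere $S^{2k}\subseteq[0,1]^{N_i}$ is a Bott-type projection, so that the obstruction lives in the Cuntz semigroup rather than in $\mathrm{K}_0$.

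Third, you flag the lower-bound estimate as the main obstacle and do not resolve it.  This is the heart of the theorem.  The paper obtains finite-stage witnesses from \cite[Theorem 6.6]{To3}, then propagates the obstruction to the limit by observing that the top Chern class of $\xi^{\times k_{i,j}}$ is non-vanishing (\cite[Lemma 2.1]{V1}), and closes the argument with an explicit rank inequality showing $\mathrm{rank}(a)\,m_j/m_i > \mathrm{rank}(f_b)$ for $i$ large.  That quantitative step---trading the extra rank coming from the point evaluations against the normalized rank of $a$---is exactly what needs to be done carefully here and is omitted from your sketch.
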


\begin{proof}
The general framework for the construction of $A^{(r)}$ follows \cite{V1}. 
Find sequences of natural numbers $(n_i)$ and $(l_i)$ and a natural number $m_0$ with the following properties:
\begin{enumerate}
\item[(i)] $n_i \to \infty$;
\item[(ii)]
\[
\frac{n_0}{2m_0} \cdot \frac{n_1 n_2 \cdots n_i}{(n_1+l_1)(n_2+l_2) \cdots (n_i+l_i)} \stackrel{i \to \infty}{\longrightarrow} r;
\]
\item[(iii)] $l_i \neq 0$ for infinitely many $i$;
\item[(iv)] every natural number divides some $m_i := m_0(n_1+l_1)(n_2+l_2) \cdots (n_i+l_i)$
\end{enumerate}
Set $X_1 = [0,1]^{n_0}$ and set $X_{i+1} = (X_i)^{n_{i+1}}$.  
Let $\pi_i^j:X_{i+1} \to X_i$, $1 \leq j \leq n_{i+1}$ be the co-ordinate
projections.  Let $A_i$ be the homogeneous C$^*$-algebra $\mathrm{M}_{m_i}(\mathrm{C}(X_i))$, and let $\phi_i:A_i \to A_{i+1}$
be the $*$-homomorphism given by 
\[
\phi_i(f)(x) = \mathrm{diag}\left( f \circ \pi_i^1(x),\ldots,f \circ \pi_i^{n_{i+1}}(x),a(x_i^1),\ldots,a(x_i^{l_i}) \right), \ \forall x \in X_{i+1},
\]
where $x_i^1,\ldots,x_i^{l_i} \in X_i$ are to be specified.  Set $A^{(r)} = \lim_{i \to \infty} (A_i,\phi_i)$,
and define
\[
\phi_{i,j} := \phi_{j-1} \circ \cdots \circ \phi_i.
\]
Let $\phi_{i,\infty}:A_i \to A$ be the canonical map.
We note that the $x_i^1,\ldots,x_i^{l_i} \in X_i$ may be chosen to ensure that $A$ is simple (cf. \cite{V1}); we assume that
they have been so chosen, whence $A^{(r)}$ is unital, simple, separable, and amenable.

By Theorem \ref{rshrc}, we have 
\[
\underset{i \to \infty}{\lim} \mathrm{rc}(A_i) = \underset{i \to \infty}{\lim} \frac{n_0 n_1 \cdots n_i -1}{2m_0 (n_1+l_1)(n_2+l_2) \cdots (n_i+l_i)} = r.
\]
Since the construction of $A^{(r)}$ is the same as that of \cite[Theorem 4.1]{To4}, we conclude that $\mathrm{rc}(A^{(r)}) \leq r$ by 
\cite[Proposition 3.3]{To4}.  

Let $\eta > 0$ be given.  We will exhibit positive elements $a,b \in \mathrm{M}_{\infty}(A^{(r)})$ with the property that
\[
d_\tau(a) + r - \eta < d_\tau(b), \ \forall \tau \in \mathrm{T}(A^{(r)}),
\]
and yet $\langle a \rangle \nleq \langle b \rangle$ in $W(A^{(r)})$.  This will show that $\mathrm{rc}(A^{(r)}) \geq r-\eta$ for 
every $\eta >0$, whence $\mathrm{rc}(A^{(r)}) =r$, as desired.

Choose $i$ large enough that 
\[
\frac{ \lfloor \mathrm{dim}(X_{i})/2 \rfloor -1}{m_i} > r- \eta/4.
\]
It follows from \cite[Theorem 6.6]{To3} that there are $a,b \in \mathrm{M}_{\infty}(A_i)_+$ such that $\langle a \rangle \nleq \langle b
\rangle$ in $W(A_i)$ and yet
\[
d_{\tau}(a) + r-\eta < d_{\tau}(b), \ \forall \tau \in \mathrm{T}(A_i).
\]
Assumption (ii) above ensures that $n_i \neq 0$, whence each $\phi_i$ is injective.  We may thus identify $a$ and $b$ with their
images in $A^{(r)}$ so that
\[
d_{\tau}(a) + r-\eta < d_{\tau}(b), \ \forall \tau \in \mathrm{T}(A^{(r)}).
\]
We need only prove that $\langle a \rangle \nleq \langle b \rangle$ in $W(A^{(r)})$.
The technique for proving this is an adaptation of Villadsen's Chern class obstruction argument
from \cite{V1}.  

With $N_i:= n_0 n_1 \cdots n_i$, we have $A_i = \mathrm{M}_{m_i}(\mathrm{C}([0,1]^{N_i}))$.  The element $b$ of $\mathrm{M}_{\infty}(A_i)$
has the following properties:  there is a closed subset $Y$ of $[0,1]^{N_i}$ homeomorphic to $\mathrm{S}^{2k}$, $N_i-2 \leq 2k \leq N_i$, such that
the restriction of $b$ to $Y$ is a projection of rank $k$ corresponding to the $k$-dimensional Bott bundle $\xi$ over $\mathrm{S}^{2k}$; and
the rank of $b$ is at most $2k$ over any point in $X_i = [0,1]^{N_i}$.  The element $a$ has constant rank---it is a projection
corresponding to a trivial line bundle over $X_i$---and need only have normalised rank strictly less than $3\eta/4$.  By increasing $i$, and hence $m_i$,
if necessary, we may assume that the normalised rank of $a$ is at least $\eta/2$.  This leads to
\begin{equation}\label{rankbound}
d_{\tau}(a) > \frac{\eta}{2} = 2r \left( \frac{\eta}{4r} \right) \geq d_{\tau}(b) \left( \frac{\eta}{4r} \right), \ \forall \tau \in \mathrm{T}(A_i).
\end{equation}

The map $\phi_{i,j}:A_i \to A_j$ has the form
\[
\phi_{i,j}(f) = \mathrm{diag}\left( f \circ \pi_{i,j}^1(x),\ldots,f \circ \pi_{i,j}^{k_{i,j}}(x),f(x_i^1),\ldots,f(x_i^{l_{i,j}}) \right), \ 
\forall x \in X_{j},
\]
where $k_{i,j} = n_{i+1}n_{i+2}\cdots n_j$ and $l_{i,j} = m_j/m_i - k_{i,j}$.  Following \cite{V1}, we have that $\phi_{i,j}(a)$ is a projection
of rank $\mathrm{rank}(a)m_j/m_i$ corresponding to the trivial vector bundle $\theta_{\mathrm{rank}(a) m_j/m_i}$, while the restriction of 
$\phi_{i,j}(b)$ to $Y^{k_{i,j}} \subseteq X_i^{k_{i,j}} = X_j$ is of the form $\xi^{\times k_{i,j}} \oplus f_b$, where $f_b$ is a 
constant positive element of 
rank at most $2k l_{i,j}$.  If $p$ is the image of $\mathbf{1} \in A_i$ under the eigenvalue maps of $\phi_{i,j}$ which are co-ordinate projections, 
then $p\phi_{i,j}(b)p = \xi^{\times k_{i,j}}$.  Let $x \in A_j$.
Restricting to $Y^{k_{i,j}}$ (and using the same notation for the restriction of $x$) we have
\begin{eqnarray*}
& & \Vert x (\xi^{\times k_{i,j}} \oplus f_b)x^* - \theta_{\mathrm{rank}(a) m_j/m_i} \Vert \\
 & = & \Vert [x(p \oplus f_b^{1/2})](\xi^{\times k_{i,j}} \oplus \theta_{\mathrm{rank}(f_b)})[x(p \oplus f_b^{1/2})]^* - 
\theta_{\mathrm{rank}(a) m_j/m_i} \Vert.
\end{eqnarray*}
If we can show that $\theta_{\mathrm{rank}(a) m_j/m_i}$ is not Murray-von Neumann equivalent to a subprojection of $\xi^{\times k_{i,j}} \oplus 
\theta_{\mathrm{rank}(f_b)}$, then we will have that the last quantity above is $\geq 1/2$ (cf. \cite[Lemma 2.1]{To5}).  It will 
then follow that for every $j > i$ and every $x \in A_j$, 
\[
\Vert x \phi_{i,j}(b) x^* - \phi_{i,j}(a) \Vert \geq 1/2;
\]
in particular, $\langle a \rangle \nleq \langle b \rangle$, as desired.

By a straightforward adaptation of \cite[Lemma 2.1]{V1} (using the fact that the top Chern class of $\xi$ is not zero), $\theta_{\mathrm{rank}(a) m_j/m_i}$
will fail to be equivalent to a subprojection of $\xi^{\times k_{i,j}} \oplus \theta_{\mathrm{rank}(f_b)}$ if $\mathrm{rank}(a) m_j/m_i > 
\mathrm{rank}(f_b)$.  We have
\begin{eqnarray*}
\mathrm{rank}(f_b) - \mathrm{rank}(a)\cdot \frac{m_j}{m_i}& \leq & 2k l_{i,j} - \mathrm{rank}(a)\cdot \frac{m_j}{m_i} \\
& \leq & N_i \left( \frac{m_j}{m_i} - k_{i,j} \right) - \mathrm{rank}(a)\cdot \frac{m_j}{m_i} \\
& = & (N_i -\mathrm{rank}(a)) \cdot \frac{m_j}{m_i} - n_0 n_1 \cdots n_j, 
\end{eqnarray*}
so it will be enough to prove that 
\[
n_0 n_1 \cdots n_j > (N_i -\mathrm{rank}(a)) \cdot \frac{m_j}{m_i}.
\]
Rearranging and using the definitions of $m_i$ and $N_i$ we must show that
\[
\frac{(n_{i+1}+l_{i+1})\cdots(n_j + l_j)}{n_{i+1} \cdots n_j} \cdot (1- \mathrm{rank}(a)/N_i) < 1.
\]
Now $\mathrm{rank}(a) > (\eta/2)m_i$, so the right hand side above is less than 
\begin{equation}\label{ineq6}
\frac{(n_{i+1}+l_{i+1})\cdots(n_j + l_j)}{n_{i+1} \cdots n_j} \cdot \left(1- \frac{m_i \eta}{2 N_i} \right).
\end{equation}
The sequence $(m_i \eta)(2 N_i)$ is convergent to a nonzero limit, so for some $\gamma>0$, for all $i$
sufficiently large, the expression in (\ref{ineq6}) is strictly less than
\begin{equation}\label{ineq7}
\frac{(n_{i+1}+l_{i+1})\cdots(n_j + l_j)}{n_{i+1} \cdots n_j} \cdot (1- \gamma ).
\end{equation}
Increasing $i$ if necessary we may assume that 
\[
\frac{(n_{i+1}+l_{i+1})\cdots(n_j + l_j)}{n_{i+1} \cdots n_j} < \frac{1}{1-\gamma},
\]
whence the expression in (\ref{ineq7}) is strictly less than one, as required.
This completes the proof that $\mathrm{rc}(A^{(r)}=r$.  

Since each natural number divides some $m_i$ and each $X_i$ is contractible, we have
$\mathrm{K}_0(A^{(r)}) \cong \mathbb{Q}$, with the usual order structure and order unit.
The contractibility of $X_i$ also implies that $\mathrm{K}_1(A_i)=0$ for every $i$, whence
$\mathrm{K}_1(A^{(r)})=0$, too.  The pairing $\rho$ between traces and $\mathrm{K}_0$ is determined
uniquely since there is only one state on $\mathrm{K}_0(A^{(r)})$.  In order to complete
the proof that $\mathrm{Ell}(A^{(r)}) \cong \mathrm{Ell}(A^{(s)})$ for every $r,s \in \mathbb{R}^+ \backslash
\{0\}$, we must prove that $\mathrm{T}(A^{(r)}) \cong \mathrm{T}(A^{(s)})$.  

Recall that the tracial state space of $\mathrm{M}_k(\mathrm{C}(X))$ is homeomorphic to
the space $\mathcal{P}(X)$ of regular positive Borel probability measures on $X$.  Let 
$(A_i^{(r)},\phi_i)$ and $(A_i^{(s)},\psi_i)$ be inductive sequences as above, with simple limits
$A^{(r)}$ and $A^{(s)}$, respectively.  We have $\mathrm{Spec}(A_i^{(r)}) = [0,1]^{N_i}$ and
$\mathrm{Spec}(A_i^{(s)}) = [0,1]^{M_i}$.  Using the superscript $\sharp$ to denote the map induced on traces
by a $*$-homomorphism, we have
\[
\mathrm{T}(A^{(r)}) \cong \underset{\longleftarrow}{\lim} \ (\mathcal{P}([0,1]^{N_i},\phi_i^{\sharp}); \ \ \ 
\mathrm{T}(A^{(s)}) \cong \underset{\longleftarrow}{\lim} \ (\mathcal{P}([0,1]^{M_i},\psi_i^{\sharp}).
\]
We require sequences $(\gamma_i)$ and $(\delta_i)$ of continuous affine maps making the triangles in the diagram
\begin{equation}\label{inter}
\xymatrix{
{\mathcal{P}([0,1]^{N_1})} & 
{\mathcal{P}([0,1]^{N_2})}\ar[l]^{\phi_1^{\sharp}}\ar[dl]^{\delta_1} &
{\mathcal{P}([0,1]^{N_3})}\ar[l]^{\phi_2^{\sharp}}\ar[dl]^{\delta_2} & {\cdots}\ar[l]^{\phi_3^{\sharp}}\ar[dl]^{\delta_3} \\
{\mathcal{P}([0,1]^{M_1})}\ar[u]^{\gamma_{1}}&
{\mathcal{P}([0,1]^{M_2})}\ar[l]^{\psi_1^{\sharp}}\ar[u]^{\gamma_{2}} &
{\mathcal{P}([0,1]^{M_3})}\ar[l]^{\psi_2^{\sharp}}\ar[u]^{\gamma_{3}} & {\cdots}\ar[l]^{\psi_3^{\sharp}}
}
\end{equation}
commute ever more closely on ever larger finite sets as $i \to \infty$.  We will in fact be able to arrange
for near-commutation on the entire source space in each triangle. 

Let $\mu$ be a probability measure on $X^N$, and $K$ a subset of $\{1,\ldots,N\}$.  
We use $\mu_K$ to denote the measure on $X^{|K|}$ defined by 
integrating out those co-ordinates of $X^N$ not contained in $K$.   Straightforward calculation
shows that upon viewing $X_{i+1}$ as $X_i^{N_{i+1}/N_i}$ we have
\[
\phi_i^{\sharp}(\mu) =  \frac{n_{i+1}}{n_{i+1}+l_{i+1}} \left[ \frac{N_i}{N_{i+1}} \ \bigoplus_{l=1}^{N_{i+1}/N_i} \mu_{\{l\}} \right]
+  \frac{l_{i+1}}{n_{i+1}+l_{i+1}}  \lambda_i,
\]
where $\lambda_i$ is a convex combination of finitely many point masses.  A similar statement holds for $\psi_i^{\sharp}$. 
Since $l_{i+1}/(n_{i+1}+l_{i+1})$ is negligible for large $i$, we may in fact assume that 
\[
\phi_i^{\sharp}(\mu) =   \frac{N_i}{N_{i+1}} \ \bigoplus_{l=1}^{N_{i+1}/N_i} \mu_{\{l\}}; \ \ \ 
\psi_i^{\sharp}(\mu) =   \frac{M_i}{M_{i+1}} \ \bigoplus_{k=1}^{M_{i+1}/M_i} \mu_{\{k\}}
\]
for the purposes of our intertwining argument.  We may also assume, by compressing our sequences if necessary, that
$N_1 \ll M_1 \ll N_2 \ll M_2 \ll \cdots$.  Define
\[
\gamma_1(\mu) = \frac{1}{\lfloor M_1/N_1 \rfloor} \ \bigoplus_{l=1}^{\lfloor M_1/N_1 \rfloor} \mu_{\{(l-1)N_1+1,\ldots,lN_1\}}.
\]
Now set $B_k = \{(k-1)M_1+1,\ldots,kM_1\}$ for each $1 \leq k \leq \lfloor N_2/M_1 \rfloor$, and $D_t = 
\{(t-1)N_1,\ldots, t N_1\}$ for each $1 \leq t \leq N_2/N_1$.  Define
\[
\delta_1(\mu) = \frac{1}{\lfloor N_2/M_1 \rfloor} \ \bigoplus_{k=1}^{\lfloor N_2/M_1 \rfloor} \sigma_k^*(\mu_{B_k}),
\]
where $\sigma_k^*$ is the map induced on measures by the homeomorphism $\sigma_k:B_k \to B_k$ defined by the following property:
if $j$ is the first co-ordinate of $B_k$ contained in a $D_t$ which is itself contained in $B_k$, then $\sigma_k$ is the permutation
which subtracts $j-1 (\mathrm{mod} |B_k|)$ from each co-ordinate.  (The idea is that $\sigma_k$ moves all of the $D_t$s contained in 
$B_k$ ``to the beginning''.)

Let $L$ be the number of $D_t$s which are contained in some $B_k$.  Since $N_1 \ll M_1 \ll N_2$, we have that $(N_2 - N_1L)/N_2$ is 
(arbitrarily) small.  Now 
\[
\gamma_1 \circ \delta_1 (\mu) = \frac{1}{L} \bigoplus_{\{t \ | \ D_t \subseteq B_k, \ \mathrm{for \ some} \ k \}} \mu_{D_t},
\]
while
\[
\phi_1^{\sharp}(\mu) = \frac{1}{N_2/N_1} \bigoplus_{k=1}^{N_2/N_1} \mu_{D_k}.
\]
The difference $[(\gamma_1 \circ \delta_1)-\phi_1^{\sharp}](\mu)$ is a measure of total mass at most $2(N_2-N_1 L)/N_2$, and so the first triangle
from the diagram (\ref{inter}) commutes to within this tolerance on {\it all} of $\mathcal{P}([0,1]^{N_2})$.  The subsequent
$\gamma_i$s and $\delta_i$s are defined in a manner analogous to our definition of $\delta_1$, and this leads to the desired intertwining.
We conclude that $\mathrm{Ell}(A^{(r)}) \cong \mathrm{Ell}(A^{(s)})$, as desired.  

It remains to prove that if $A^{(r)}$ and $A^{(s)}$ are Morita equivalent, then $r/s \in \mathbb{Q}$.  
Suppose that they are so.  By the Brown-Green-Rieffel Theorem, $A^{(r)}$ and $A^{(s)}$ are stably
isomorphic, and so there are projections $p,q \in A^{(r)} \otimes \mathcal{K}$ such that 
$A^{(r)} \cong p (A^{(r)} \otimes \mathcal{K})p$ and $A^{(s)} \cong q(A^{(r)} \otimes \mathcal{K})q$.
Since $\mathrm{K}_0(A^{(r)} \otimes \mathcal{K}) = \mathrm{K}_0(A^{(r)}) = \mathbb{Q}$, there are 
natural numbers $n$ and $m$ such that $n[p] = m[q]$ in $\mathrm{K}_0$.  It is proved in \cite{V1}
that the construction used to arrive at $A^{(r)}$ and $A^{(s)}$ always produces C$^*$-algebras of
stable rank one, whence $A^{(r)} \otimes \mathcal{K}$ has stable rank one.  Thus, $\oplus_{i=1}^n \ p$ and
$\oplus_{j=1}^m \ q$ are Murray-von Neumann equivalent, and
\[
\mathrm{M}_n(A^{(r)}) \cong (\oplus_{i=1}^n \ p)(A^{(r)} \otimes \mathcal{K})(\oplus_{i=1}^n \ p)
\cong (\oplus_{i=1}^m \ q)(A^{(r)} \otimes \mathcal{K})(\oplus_{i=1}^m \ q) \cong \mathrm{M}_m(A^{(s)}).
\]
By \cite[Proposition 6.2 (ii)]{To3} we have 
\[
r/n = \mathrm{rc}\left( \mathrm{M}_n(A^{(r)}) \right) = \mathrm{rc} \left( \mathrm{M}_m(A^{(s)}) \right) = s/m,
\]
whence $r/s \in \mathbb{Q}$, as required.
\end{proof}

\begin{rems} {\rm If $r/s \notin \mathbb{Q}$, then $A^{(r)} \otimes \mathcal{K} \ncong A^{(s)} \otimes \mathcal{K}$.
This, to our knowledge, is the first example of simple separable amenable {\it stable} C$^*$-algebras which are not
isomorphic yet have the same Elliott invariant.}
\end{rems}

\end{document}